	\newtheorem{theorem}{Theorem}[section]
	\newtheorem{lemma}[theorem]{Lemma}
	\newtheorem{prop}[theorem]{Proposition}
	\newtheorem{cor}[theorem]{Corollary}
	\theoremstyle{definition}
	\newtheorem{definition}[theorem]{Definition}
\newtheorem{remark}[theorem]{Remark}
\theoremstyle{remark} 
\newtheorem{notation}{Notation}
\newtheorem{observation}{Observation}
\newcounter{step}
\DeclareRobustCommand*{\mfaktor}[3][]
{
   { \mathpalette{\mfaktor@impl@}{{#1}{#2}{#3}} }
}
\newcommand*{\mfaktor@impl@}[2]{\mfaktor@impl#1#2}
\newcommand*{\mfaktor@impl}[4]{
   \settoheight{\faktor@zaehlerhoehe}{\ensuremath{#1#2{#3}}}%
   \settoheight{\faktor@nennerhoehe}{\ensuremath{#1#2{#4}}}%
      \raisebox{-0.5\faktor@zaehlerhoehe}{\ensuremath{#1#2{#3}}}%
      \mkern-4mu\diagdown\mkern-5mu%
      \raisebox{0.5\faktor@nennerhoehe}{\ensuremath{#1#2{#4}}}%
}
\newcommand{\bP}{{\mathbb P}}
\newcommand{\bA}{{\mathbb A}}
\newcommand{\bQ}{{\mathbb Q}}
\newcommand{\calO}{\mathcal O}
\newcommand{\calM}{\mathcal M}
\newcommand{\calH}{\mathcal H}
\newcommand{\calI}{\mathcal I}
\newcommand{\calB}{\mathcal B}
\newcommand{\calL}{\mathcal L}
\newcommand{\calW}{\mathcal W}
\newcommand{\calU}{\mathcal U}
\newcommand{\calX}{\mathcal X}
\newcommand{\calC}{\mathcal C}
\newcommand{\calP}{\mathcal P}
\newcommand{\calY}{\mathcal Y}
\newcommand{\calZ}{\mathcal Z}
\newcommand{\calD}{\mathcal D}
\newcommand{\calV}{\mathcal V}
\newcommand{\bmu}{\boldsymbol{\mu}}
\newcommand{\sG}{\mathscr{G}}
\newcommand{\sK}{\mathscr{K}}
\newcommand{\sS}{\mathscr{S}}
\newcommand{\sY}{\mathscr{Y}}
\newcommand{\bG}{\mathbb{G}}
\newcommand{\Spec}{\mathrm{Spec}}
\newcommand{\Pic}{\mathrm{Pic}}
\newcommand{\oH}{\operatorname{H}}
\newcommand{\Hom}{\operatorname{Hom}}
\author{Giovanni Inchiostro}
\title{Moduli of genus one curves with two marked points as a weighted blow-up}
\begin{document}

\begin{abstract}
We give an explicit description of $\overline{\calM}_{1,2}$ as a weighted blow-up of a weighted projective stack. We use this description to compute the Brauer group of $\overline{\calM}_{1,2;S}$ over any base scheme $S$ where 6 is invertible, and the integral Chow rings of 
$\overline{\calM}_{1,2}$ and $\calM_{1,2}$.
\end{abstract}
\maketitle
\section{Introduction} The moduli spaces of curves have been an object of central interest in algebraic geometry since the
1800 with Riemann and Poincaré. Understanding its global geometry and the numerical invariants
generated a vaste amount of literature.

The main focus of this paper is the moduli space of 2-pointed genus 1 curves, and their Deligne-Mumford compactification $\overline{\calM}_{1,2}$. Our first goal is to revisit a description by Massarenti in \cite{Mas14} of the coarse moduli space of $\overline{\calM}_{1,2}$, as a weighted blow-up of a weighted projective space, to get an analogous description for the moduli stack over $\mathbb{Z}[\frac{1}{6}]$:
\begin{theorem}\label{theorem:intro:weighted}Let $S$ be a scheme where $6$ is invertible, and let $\calP(2,3,4)_S$ be the weighted projective stack over $S$, with weights 2, 3, and 4. The map $\calP(2,3,4)_S\to S$ has a section $z:S\to \calP(2,3,4)_S$ such that the weighted blow-up of $\calP(2,3,4)_S$ at $z$ with weights 4 and 6 is isomorphic to $\overline{\calM}_{1,2;S}$.
\end{theorem}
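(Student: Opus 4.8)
The plan is to realise $\overline{\calM}_{1,2;S}$ as the universal Weierstrass cubic over $\overline{\calM}_{1,1;S}$, and then to recognise that cubic as the asserted weighted blow-up. Throughout we use that $6$ is invertible on $S$, which gives the classical identification $\overline{\calM}_{1,1;S}\cong\calP(4,6)_S$ under which the universal $1$-pointed genus one curve is the hypersurface
\[
Z=\bigl\{\,(a_4,a_6,[X:Y:W])\in(\bA^2\setminus 0)\times\bP^2\ :\ Y^2W=X^3+a_4XW^2+a_6W^3\,\bigr\}
\]
modulo the $\Gm$-action of weights $(4,6,2,3,0)$. Write $\calU:=[Z/\Gm]$, $\pi\colon\calU\to\calP(4,6)_S$ for the structure map, and $\sigma_0:=\{W=0\}\subset\calU$ for the zero section; one has $\sigma_0\cong\calP(4,6)_S$. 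A chart-by-chart check (using $6\in\calO_S^\times$) shows that $Z$ is smooth over $S$ and that $\pi$ is flat and proper with geometrically connected fibres that are smooth or nodal plane cubics; hence $\calU$ is a smooth proper Deligne--Mumford stack over $S$ of relative dimension $2$, and $(\calU,\sigma_0)$ is a family of $1$-pointed stable genus one curves.

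First I would show $\overline{\calM}_{1,2;S}\cong\calU$. By Knudsen's theorem $\overline{\calM}_{1,2}$ is the universal curve $\overline{\calC}_{1,1}\to\overline{\calM}_{1,1}$; and since $(\calU,\sigma_0)\to\calP(4,6)_S=\overline{\calM}_{1,1;S}$ is a family of $1$-pointed stable genus one curves whose classifying morphism $\overline{\calM}_{1,1;S}\to\overline{\calM}_{1,1;S}$ is the identity (it is the tautological family), we get $\calU\cong\overline{\calC}_{1,1;S}\cong\overline{\calM}_{1,2;S}$. Equivalently, and avoiding Knudsen, one writes the morphism $\Phi\colon\overline{\calM}_{1,2;S}\to\calU$ sending $(C,p_1,p_2)$ to the Weierstrass model $\bar C$ of $(C,p_1)$ — obtained by forgetting $p_2$ and stabilising — together with the image of $p_2$ under $C\to\bar C$, and checks directly that $\Phi$ is proper, birational (an isomorphism over $\calM_{1,2;S}$, where both sides are the universal punctured elliptic curve), and quasi-finite on the boundary: the divisor $\delta_{\mathrm{irr}}$ of irreducible nodal curves maps finitely onto $\pi^{-1}(\{\Delta=0\})$, while $\delta_1$ (a genus one curve meeting a $\bP^1$ which carries $p_1,p_2$) maps isomorphically onto $\sigma_0$, since forgetting $p_2$ collapses the $\bP^1$ and deposits $p_2$ at the origin of the elliptic tail; a proper, quasi-finite, birational morphism to the regular stack $\calU$ is an isomorphism.

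Next I would identify $\calU$ with the weighted blow-up. On the locus $W\neq 0$, setting $W=1$ and eliminating $a_6=Y^2-X^3-a_4X$ from the Weierstrass equation identifies $\calU\setminus\sigma_0$ with $[(\bA^3_{X,Y,a_4}\setminus\{a_4=0,\ Y^2=X^3\})/\Gm]$ of weights $(2,3,4)$, that is, with $\calP(2,3,4)_S\setminus\{z\}$, where $z$ is the image of the $\Gm$-orbit of $(X,Y,a_4)=(1,1,0)$. This orbit has trivial stabiliser, so $z$ is a genuine section $z\colon S\to\calP(2,3,4)_S$ — the one in the statement — corresponding to the cuspidal cubic $Y^2W=X^3$ equipped with a smooth point; since cuspidal curves are unstable, this is exactly where the Weierstrass data fails to produce a point of $\overline{\calM}_{1,2;S}$, so $z$ is the locus that must be modified. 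I would then show that the morphism $\calU\to\calP(2,3,4)_S$ extending this open immersion and contracting $\sigma_0$ to $z$ presents $\calU$ as $\Proj$ over $\calP(2,3,4)_S$ of the Rees algebra $\bigoplus_{d\geq 0}I_{(d)}$, where $I_{(d)}$ is generated by the monomials $a_4^{\,i}(Y^2-X^3-a_4X)^{\,j}$ with $4i+6j\geq d$ — which is exactly the weighted blow-up of $z$ with weights $(4,6)$, the weights being the weighted degrees of $a_4$ and of $a_6=Y^2-X^3-a_4X$. Concretely one checks this on the two standard charts: in the presentation $\calU=[\widetilde Z/\Gm^2]$, where $\widetilde Z\subset(\bA^2\setminus 0)\times(\bA^3\setminus 0)$ is the preimage of $Z$ and the second $\Gm$ rescales $\bP^2$, the divisor $\sigma_0=\{W=0\}$ is the exceptional locus, and $\Proj$ of the associated graded algebra on $\overline{a_4},\overline{a_6}$ with $\deg\overline{a_4}=4$, $\deg\overline{a_6}=6$ recovers $\sigma_0\cong\calP(4,6)$ — as it must, the exceptional divisor of a weighted blow-up with weights $(4,6)$ being $\calP(4,6)$, in agreement with $\delta_1\cong\overline{\calM}_{1,1}$ (and note $\calP(4,6)\not\cong\calP(2,3)$, which is what rules out the weights $(2,3)$). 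Combining this with the previous step yields the theorem.

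The main obstacle is the first step: identifying Knudsen's universal curve with the explicit Weierstrass model $\calU$ over an arbitrary base where $6$ is invertible, and in particular verifying that $\Phi$ contracts nothing — no hidden blow-down of $\delta_{\mathrm{irr}}$ or of $\delta_1$ — and that the automorphism groups agree on the nose, which is exactly what the later applications to the Brauer group and the integral Chow ring are sensitive to. Once the centre $z$ and the weights $(4,6)$ have been correctly pinned down, the weighted-blow-up identification of the last step is a bookkeeping computation with the two gradings, and the description of $\overline{\calM}_{1,1;S}$ as $\calP(4,6)_S$ is standard.
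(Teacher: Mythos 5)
Your proposal is correct in outline but runs in the opposite direction from the paper. The paper starts on the $\calP(2,3,4)$ side: it constructs the weighted blow-up $B_Z^{(4,6)}\calP(2,3,4)$ first, shows that the resolved map to $\calP(4,6)$ is a representable proper family of stable $1$-pointed genus one curves (Proposition \ref{prop_family_of_ellpitic}), and then spends most of its effort proving that the induced classifying morphism $\psi\colon\calP(4,6)\to\overline{\calM}_{1,1}$ is an isomorphism — bijectivity on geometric points, representability via Lemma \ref{lemma_representable_automorphisms_fibred_product}, Zariski's main theorem over $\overline{\mathbb{Q}}$ plus fpqc descent, miracle flatness and a Nakayama argument — so that the blow-up becomes the pullback of the universal curve. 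You instead take as input Knudsen's identification of $\overline{\calM}_{1,2}$ with the universal curve together with the fact that the standard isomorphism $\overline{\calM}_{1,1;S}\cong\calP(4,6)_S$ (which the paper also quotes from the Stacks Project) carries the universal family to your Weierstrass hypersurface $\calU$; this gives $\overline{\calM}_{1,2;S}\cong\calU$ essentially for free and shifts all the work to the second step, the direct identification of $\calU$ (with $\sigma_0$ contracted) with the weighted blow-up of $\calP(2,3,4)_S$ at $z$. Your elimination of $a_6$ on $W\neq 0$ correctly recovers the paper's centre $Z=\{[s^2,s^3,0]\}$ of Notation \ref{notation_point_z} and the open identification $\calU\smallsetminus\sigma_0\cong\calP(2,3,4)\smallsetminus Z$, and the statement you want is true; what your sketch leaves thin is precisely the part the paper does carefully in Lemma \ref{lemma_local_desc_of_calP_around_z} and Subsection \ref{subsection:weighted:blowup}, namely the existence of the contraction $\calU\to\calP(2,3,4)$ and the chart-level comparison with the blow-up, which is more than pure bookkeeping. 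Two cautions there: the comparison must be with the stack-theoretic weighted blow-up (the paper's local model $[\bA^3/\Gm]\smallsetminus\sS$, equivalently the quotient-stack $\Proj$ of the Rees algebra), since the schematic $\Proj$ of your Rees algebra only yields the coarse blow-up, whose exceptional curve is $\mathbb{P}^1$ rather than $\calP(4,6)$ and which loses the $\bmu_2,\bmu_4,\bmu_6$ stabilizers that the Brauer and Chow applications depend on; and your fallback argument avoiding Knudsen (properness, quasi-finiteness and birationality of $\Phi$ forcing an isomorphism) requires exactly the representability/automorphism-injectivity check that the paper isolates in Lemma \ref{lemma_representable_automorphisms_fibred_product}, as you yourself note. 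With those points filled in, your route is a legitimate alternative: it trades the paper's abstract isomorphism argument for standard moduli inputs plus an explicit bigraded chart computation.
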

In the special case where $S=\Spec(k)$ is a field, we obtain an isomorphism between $\overline{\calM}_{1,2}$ and a weighted blow-up of a point in a weighted projective stack.

One can understand Theorem \ref{theorem:intro:weighted} as follows. First, we identify $\calP(2,3,4)$ with a moduli space of genus 1 curves with two marked points (see Subsection \ref{subsection_desc_of_the_space}). This in turn produces a rational map $\calP(2,3,4)\dashrightarrow \overline{\calM}_{1,1}$, defined away from a closed (schematic) subset $Z\subseteq \calP(2,3,4)$, with fibers being elliptic curves without a point (see Lemma \ref{lemma_fibers_elliptic_curves_without_a_pt}). We prove  that a weighted blow-up with weights 4 and 6 resolves the indeterminacy. This will give a map from such a blow-up $B_Z^{(4,6)}\calP(2,3,4)\to \overline{\calM}_{1,1}$. In Theorem \ref{theorem_iso_weighted_blowup_M12} we identify this blow-up with the morphism from the universal family $\overline{\calM}_{1,2} \to \overline{\calM}_{1,1}$.

The main advantage of this presentation is its explicit nature, and exploiting its relation with a (relatively) simpler space (namely $\calP(2,3,4)$), we obtain the following:

\begin{theorem}Let $S$ be a scheme where 6 is invertible.
If we denote by $Br'(\cdot)$ the cohomological Brauer group and by $A^*(\cdot)$ the Chow ring, we have:
\begin{enumerate}
    \item $Br'(\overline{\calM}_{1,2;S}) = Br'(S)$,
    \item if $S$ is a field, $A^*(\overline{\calM}_{1,2}) = \mathbb{Z}[x,y]/(24x^2 + 24 y^2, xy)$, 
    \item if $S$ is a field, $A^*(\calM_{1,2}) = \mathbb{Z}[t]/(12t)$, and
    \item we can identify $A^*(\overline{\calM}_{1,2}) \to A^*(\calM_{1,2})$ with the map $y\mapsto 0$ and $x\mapsto t$.
\end{enumerate}
\end{theorem}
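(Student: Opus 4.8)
The plan is to deduce all four statements from the isomorphism $\overline{\calM}_{1,2;S}\cong B^{(4,6)}_z\calP(2,3,4)_S$ of Theorem~\ref{theorem:intro:weighted}. Write $\pi\colon\overline{\calM}_{1,2;S}\to\calP(2,3,4)_S$ for the blow-down, $E$ for the exceptional divisor, and $U_0:=\overline{\calM}_{1,2;S}\setminus E=\calP(2,3,4)_S\setminus z$ for the common open locus.

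For (1), I would first prove $Br'(\calP(2,3,4)_S)=Br'(S)$. Using $\calP(2,3,4)_S=[(\mathbb{A}^3_S\setminus\{0\})/\Gm]$, purity for the Brauer group deletes the codimension-$3$ origin, homotopy invariance identifies $Br'(\mathbb{A}^3_S)$ with $Br'(S)$, and $\Gm$-descent together with $Br'(B\Gm_S)=Br'(S)$ --- where inverting $6$ rules out the small torsion contributions that the weights $2,3,4$ could in principle produce --- gives the claim, the section $z$ making the identification canonical. Since $z$ is a regular closed substack of codimension $2$, purity gives $Br'(\calP(2,3,4)_S)\xrightarrow{\sim}Br'(U_0)$, so $Br'(U_0)=Br'(S)$. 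As $E$ is a regular divisor in the regular stack $\overline{\calM}_{1,2;S}$, the excision sequence gives an injection $Br'(\overline{\calM}_{1,2;S})\hookrightarrow Br'(U_0)=Br'(S)$; conversely the structure morphism induces $Br'(S)\to Br'(\overline{\calM}_{1,2;S})$, and restricting to $U_0$ shows the composite $Br'(S)\to Br'(\overline{\calM}_{1,2;S})\to Br'(U_0)=Br'(S)$ is the identity. A split injection of abelian groups whose target injects back into the source is an isomorphism, which gives (1). The substantive input is the computation $Br'(\calP(2,3,4)_S)=Br'(S)$, and I expect the Brauer-group purity statements for smooth Deligne--Mumford stacks to be the point needing the most care.

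For (2) I would take $S=\Spec k$ and work with equivariant Chow rings. From $\calP(2,3,4)=[(\mathbb{A}^3\setminus\{0\})/\Gm]$ one gets $A^*(\calP(2,3,4))=A^*_{\Gm}(\mathbb{A}^3)/([\{0\}])=\mathbb{Z}[h]/(24h^3)$ with $h=c_1(\calO(1))$ and $[\{0\}]=2h\cdot3h\cdot4h$. One then computes the Chow ring of the weighted blow-up from its presentation as a global quotient stack --- equivalently, via the weighted analogue of the blow-up exact sequence and the self-intersection formula. The exceptional divisor $E$ is (a form of) the weighted projective line $\calP(4,6)$, whose Chow ring $\mathbb{Z}[e]/(24e^2)$ carries the relevant torsion; evaluating $c_1(\calO_E(E))$, $\pi^*h\cdot[E]$ and $[E]^2$, with careful attention to the automorphisms along $z$ and to the blow-up weights $(4,6)$, and then choosing an integral basis $x,y$ of $A^1$ that diagonalizes the intersection pairing, should yield $A^*(\overline{\calM}_{1,2})=\mathbb{Z}[x,y]/(24x^2+24y^2,xy)$; in particular $A^1\cong\mathbb{Z}^2$ and $A^2\cong\mathbb{Z}\oplus\mathbb{Z}/24$. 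This weighted blow-up computation, including the bookkeeping of torsion required to see the answer in the stated form, is the step I expect to be the main obstacle.

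For (3) the cleanest route is a direct presentation of $\calM_{1,2}$ as the universal once-punctured elliptic curve over $\calM_{1,1}$: in the Weierstrass model the relation $(\wp')^2=4\wp^3-g_2\wp-g_3$ lets one eliminate $g_3$, giving $\calM_{1,2}\cong[(\mathbb{A}^3_{g_2,\wp,\wp'}\setminus V(\tilde\Delta))/\Gm]$ with weights $(4,2,3)$ and $\tilde\Delta$ the discriminant, a reduced hypersurface of weight $12$; deleting it from $A^*_{\Gm}(\mathbb{A}^3)=\mathbb{Z}[t]$ gives $A^*(\calM_{1,2})=\mathbb{Z}[t]/(12t)$. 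For (4) I would instead use that $\calM_{1,2}$ is the complement in $\overline{\calM}_{1,2}$ of its two boundary divisors --- the exceptional $E$ and the strict transform of the closure in $\calP(2,3,4)$ of the locus lying over the cusp of $\overline{\calM}_{1,1}$ --- run the localization sequence, and check that the ideal generated by these two classes together with their pushforwards is $(y,12x)$, so that $A^*(\overline{\calM}_{1,2})\to A^*(\calM_{1,2})$ is the quotient by $(y,12x)$, i.e.\ $y\mapsto0$ and $x\mapsto t$. Comparing this with the direct computation pins down the generator $t$ as the image of $x$, completing (3) and (4).
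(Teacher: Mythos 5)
Your treatment of parts (2) and (4) follows essentially the same route as the paper (blow-up/localization sequence, equivariant Chow of $\calP(2,3,4)$ and of the exceptional $\calP(4,6)$, intersection bookkeeping with $x=c_1(f^*\calO_{\calP}(1))$ and $y=[E]$), so the issue there is only that the "bookkeeping" you defer is where all the work lies. The genuine gaps are in (1) and (3).

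For (1), your argument is built on purity and on regularity of $\overline{\calM}_{1,2;S}$, $\calP(2,3,4)_S$ and $\mathbb{A}^3_S$: purity in codimension $2$ to remove the section $z$, injectivity of $\operatorname{Br}'$ under restriction to the dense open complement of the divisor $E$, and homotopy invariance $\operatorname{Br}'(\mathbb{A}^3_S)=\operatorname{Br}'(S)$. All of these require (at least) $S$ regular and Noetherian, whereas the statement is for an arbitrary scheme $S$ with $6$ invertible; over such $S$ the stacks in question need not be regular, and none of the purity/injectivity/homotopy-invariance inputs are available. This is exactly why the paper argues relatively: it quotes Shin's theorem $\operatorname{Br}'(\calP_S)=\operatorname{Br}'(S)$ over an arbitrary base, runs the Leray spectral sequence for the blow-down $f\colon\overline{\calM}_{1,2;S}\to\calP(2,3,4)_S$, computes $R^1f_*\bG_m=\iota_*\mathbb{Z}$, and kills torsion in $R^2f_*\bG_m$ by a formal-local argument at the blown-up locus (Grothendieck existence for stacks, lifting $1$-twisted line bundles through infinitesimal thickenings, then Tannaka duality and Artin approximation). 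Your sketch of $\operatorname{Br}'(\calP(2,3,4)_S)=\operatorname{Br}'(S)$ via "$\bG_m$-descent" is also not a proof over a general base; if you restrict to $S$ a field (or a regular Noetherian base) your outline is plausible but still needs purity statements for $\operatorname{Br}'$ of tame Deligne--Mumford stacks, which you would have to establish. For (3), the step "deleting the weight-$12$ reduced discriminant hypersurface from $A^*_{\bG_m}(\mathbb{A}^3)=\mathbb{Z}[t]$ gives $\mathbb{Z}[t]/(12t)$" only identifies $A^1$: excision gives $A^*(\calM_{1,2})=\mathbb{Z}[t]/I$ where $I$ is the full image of $A_*^{\bG_m}$ of the discriminant locus, and in codimension $\geq 2$ this image is generated by classes of invariant subvarieties of that (singular) surface, not just by multiples of $12t$. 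One must check these land in $(12t)$ (compare the removal of a conic from $\mathbb{P}^2$, where the image is $(2h,h^2)\neq(2h)$); this is precisely what the paper does by analyzing the stacky nodal curve, its two $B\bmu_2$-points (pushing forward to $12t^2$), and a Chow-envelope/Picard computation. Your plan can be completed, but as written this verification is missing.
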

We observe that the description of $A^*(\overline{\calM}_{1,2}) = \mathbb{Z}[x,y]/(24x^2 + 24 y^2, xy)$ was obtained, by other means, in \cite[Theorem 2.6]{di2021stable}, by relating $\overline{\calM}_{1,2}$ to a stack of so-called $A_2$ curves.

\subsection{Background on Brauer groups and integral Chow rings of moduli problems}
In this subsection we bring to the reader's attention some of the existing literature on Brauer groups and Chow rings of moduli problems.

While there is an extensive amount of literature on Brauer groups of schemes, Brauer groups of moduli problems are much harder to compute, especially when the base scheme is not a field, and only few examples are known. For example, in \cite{period_index_max} Lieblich computes the Brauer group of $B\bmu_k$ over fields where $k$ is invertible. In \cite{Brauer_group_m11}, Antieau and Meier compute the Brauer group of $\calM_{1,1;S}$ over a wide range of base schemes $S$, and Shin in \cite{Minseon} computes the Brauer group of weighted projective stacks over an arbitrary base scheme $S$. Similarly, Fringuelli and Pirisi \cite{FringuelliPirisi} compute the Brauer groups of $\calM_{g,n}$ and of certain moduli of vector bundles over a field, and using cohomological invariants Di Lorenzo and Pirisi compute the Brauer group of the moduli of hyperelliptic curves in \cite{DLP}.

Similarly to Brauer groups, Chow rings of moduli problems, especially with integral coefficients, are often quite difficult to compute. For example, only a few examples have been computed in full, for moduli stacks of curves $\calM_{g,n}$ and $\overline{\calM}_{g,n}$ with $g$ and $n$ small (see \cites{Vis, Lar, DLV, di2021stable}) or for other moduli problems
(see \cites{ EdFu, DL, DLFV, CDLI}).

\subsection{Conventions.} We will work over a scheme $S$ where 6 is invertible. "Points" will always be geometric points, and "fibers" will always be fibers over geometric points. Given an equivalence relation on a set $\sS$ and given $a\in \sS$, we will denote by $[a]$ the equivalence class of $a$.  Given a tuple of $n+1$ integers $a_0,...a_n$ and a scheme $S$ we denote by $\mathcal{P}_S (a_0,...,a_n)$ the weighted projective stack with weights $a_1,...,a_n$ over $S$. This is defined as the quotient $[\bA^{n+1}\smallsetminus \{0\}/\bG_m]$ where the action of $\bG_m$ has weight $a_i$ on $x_i$. We will assume that all the flat morphisms have equidimensional fibers. 

\subsection{Acknowledgements} I thank Dan Abramovich, Jarod Alper, Dori Bejleri, Andrea Di Lorenzo and Siddharth Mathur for helpful discussions. I thank Minseon Shin for suggesting Grothendieck existence theorem and Artin approximation for proving Theorem \ref{teo_brauer}, and for the helpful discussions that followed.  I also thank the referee
for carefully reading the draft and giving insightful feedback. 

\section{$\overline{\calM}_{1,2}$ as a weighted blowup}
\subsection{Rational map $\calP(2,3,4)\dashrightarrow \calP(4,6)$}\label{subsection_desc_of_the_space} In this subsection we define a map $\calP(2,3,4)\dashrightarrow \calP(4,6)$. Resolving the indeterminacy of this map leads to our description of $\overline{\calM}_{1,2}$. We work over $R:= \mathbb{Z}[\frac{1}{6}]$.

Consider the action of $\bG_m$ on  $\mathbb{A}^2$ and $\bA^3$ with weights $(4,6)$ and $(2,3,4)$ respectively. Let $\alpha_2,\alpha_3,\alpha_4$ be the coordinates on $\bA^3$ and $\beta_4,\beta_6$ those on $\bA^2$. Then there is an equivariant morphism $F:\mathbb{A}^3\to \mathbb{A}^2$ defined by sending $(\alpha_2, \alpha_3, \alpha_4)\mapsto (\alpha_4, \alpha_3^2 - \alpha_2^3 -\alpha_2\alpha_4)$.
This equivariant map induces a morphism $f: [\mathbb{A}^3/\bG_m] \to [\mathbb{A}^2/\bG_m]$.
\begin{lemma}\label{lemma_fibers_elliptic_curves_without_a_pt}
The morphism $f$ is representable, its geometric fibers are genus 1 curves without a point, and the fiber of $(\beta_4,\beta_6)$ is isomorphic to $V(y^2-x^3 - \beta_4x -\beta_6)\subseteq \mathbb{A}^2_{x,y}$.
\end{lemma}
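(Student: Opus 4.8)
The plan is to analyze the map $f$ at the level of the chart-by-chart description of the quotient stacks, and to identify the fibers directly. First I would recall that $[\mathbb{A}^2/\bG_m]$ with weights $(4,6)$ admits a smooth atlas by the two open substacks where $\beta_4\neq 0$ or $\beta_6\neq 0$, and I would argue representability of $f$ by checking that $F:\mathbb{A}^3\to\mathbb{A}^2$ has the property that the induced map on $\bG_m$-stabilizers is injective on geometric points. Concretely, a geometric point of $[\mathbb{A}^3/\bG_m]$ lying over $[\bar y]\in[\mathbb{A}^2/\bG_m]$ has stabilizer contained in $\mu_d$ for the appropriate $d$, and one checks that the coordinates $(\alpha_4, \alpha_3^2-\alpha_2^3-\alpha_2\alpha_4)$ already detect enough to force the stabilizer to inject; equivalently $f$ is representable because the preimage of a point is a scheme, which will follow from the explicit fiber computation anyway. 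So the cleanest route is: compute the fiber first, observe it is a scheme (an affine plane curve), and deduce representability a posteriori (since a morphism to a stack whose geometric fibers are schemes, together with the fact that it is a morphism of quotient stacks by the same group acting compatibly, is representable — more precisely I would note that $f$ is representable iff for each geometric point the map of automorphism groups is injective, and read this off the fiber).

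Second, the heart of the matter is identifying the fiber. I would work over a geometric point and pick a representative $(\beta_4,\beta_6)\in\mathbb{A}^2\smallsetminus\{0\}$ of the chosen point of $[\mathbb{A}^2/\bG_m]$. The $2$-fiber product $[\mathbb{A}^3/\bG_m]\times_{[\mathbb{A}^2/\bG_m]}\Spec\bar k$ is computed as $[(F^{-1}(\bG_m\cdot(\beta_4,\beta_6)))/\bG_m]$, i.e.\ the $\bG_m$-orbit of the locus $\{\alpha_4=t^4\beta_4,\ \alpha_3^2-\alpha_2^3-\alpha_2\alpha_4 = t^6\beta_6\}$. Using the $\bG_m$-action to rigidify by setting $t=1$ (which is legitimate because the $\bG_m$ acting on the base acts freely away from $0$ on the chosen representative, after noting $(\beta_4,\beta_6)\neq(0,0)$), the fiber becomes $V(\alpha_4-\beta_4,\ \alpha_3^2-\alpha_2^3-\alpha_2\alpha_4-\beta_6)\subseteq\mathbb{A}^3$, and eliminating $\alpha_4=\beta_4$ gives exactly $V(\alpha_3^2-\alpha_2^3-\beta_4\alpha_2-\beta_6)\subseteq\mathbb{A}^2_{\alpha_2,\alpha_3}$, which after renaming $(\alpha_2,\alpha_3)=(x,y)$ is $V(y^2-x^3-\beta_4 x-\beta_6)$. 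I would be careful to separate the case $\beta_4\neq 0$ from $\beta_6\neq 0$ so that the rigidification is uniform, and to check the two charts glue to give a well-defined fiber independent of the representative.

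Third, I would verify the fibers are genus $1$ curves without a point: the affine cubic $y^2=x^3+\beta_4 x+\beta_6$ over $\bar k$ is the complement of the point at infinity in the projective plane cubic $Y^2Z = X^3+\beta_4 XZ^2 + \beta_6 Z^3$; since $6$ is invertible this is a Weierstrass cubic, whose projective closure is an integral curve of arithmetic genus $1$ (it may be singular — nodal or cuspidal — when the discriminant $-16(4\beta_4^3+27\beta_6^2)$ vanishes, but is still of arithmetic genus $1$), and removing the smooth point at infinity gives a genus $1$ curve without a point in the sense of the paper. I would phrase "genus $1$ curve" here as a reduced connected projective curve of arithmetic genus $1$, or cite the relevant definition used earlier.

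The main obstacle I anticipate is the bookkeeping for representability and for the rigidification of the fiber: one must be careful that the $\bG_m$ acting diagonally on $\mathbb{A}^3$ and $\mathbb{A}^2$ is the \emph{same} group, so that the fiber product is not $[\bG_m\backslash F^{-1}(\text{orbit})/\bG_m]$ with two copies but a single quotient, and that the residual automorphisms of a fiber point are trivial — i.e.\ that the stabilizer in $\bG_m$ of a point of $F^{-1}(\beta_4,\beta_6)$ with $(\beta_4,\beta_6)\neq 0$ is trivial (it is, since $\beta_4\neq 0$ forces $t^4=1$ and $\beta_6\neq 0$ forces $t^6=1$, hence $t^{\gcd(4,6)}=t^2=1$; one still needs to rule out $t=-1$, which is where the precise choice of representative and the weight-$2$ coordinate $\alpha_2$ enter, since $-1$ acts on $\alpha_2$ by $(-1)^2=1$ — so in fact the stabilizer of a point with $\beta_4\neq0,\beta_6\neq0$ is exactly $\{\pm 1\}$ only if it fixes the whole point, but $\alpha_3$ has weight $3$ so $-1$ negates $\alpha_3$; thus the stabilizer is trivial unless $\alpha_3=0$, and when $\alpha_3=0$ one gets the $2$-torsion behavior consistent with the $\mu_2$ on the fiber). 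I would handle this carefully, concluding that $f$ is representable and the fiber is the asserted affine plane curve; this stabilizer analysis is the one genuinely delicate point and I would present it in full rather than as a routine computation.
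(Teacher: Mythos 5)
The identification of the fiber — the heart of your argument — contains a genuine error. You compute the fiber over the point determined by $(\beta_4,\beta_6)$ as $[F^{-1}(\bG_m\cdot(\beta_4,\beta_6))/\bG_m]$ and then "set $t=1$", justifying this by the claim that $\bG_m$ acts freely on $\bA^2\smallsetminus\{0\}$ with weights $(4,6)$. That claim is false: a point with $\beta_4\beta_6\neq 0$ has stabilizer $\bmu_2$ (from $t^4=t^6=1$), and points on the two axes have stabilizer $\bmu_4$ or $\bmu_6$. Consequently the object you compute is not the fiber over a geometric point but the preimage of the residual gerbe; generically it is $[C/\bmu_2]$, where $C$ is the affine Weierstrass curve and $-1$ acts by $y\mapsto -y$, and it has $B\bmu_2$-points along $\alpha_3=0$ — precisely the residual stabilizers you notice in your last paragraph and dismiss as "consistent". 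They are not consistent with the statement: the lemma asserts the fiber is the plane curve $V(y^2-x^3-\beta_4x-\beta_6)$, a scheme, and by the paper's convention "fiber" means base change along a geometric point $\Spec(\overline{k})\to[\bA^2/\bG_m]$, which factors through the atlas $\bA^2$. Worse, if your computed object really were the fiber, your own criterion (representability via triviality of relative automorphism groups at geometric points) would conclude that $f$ is \emph{not} representable; and since your representability argument is "a posteriori" from the fiber computation, the gap propagates to that part as well.

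The repair is the short argument the paper uses. Since $F$ is equivariant for the \emph{identity} homomorphism of $\bG_m$, the square formed by $F$, $f$ and the two quotient maps $\bA^3\to[\bA^3/\bG_m]$, $\bA^2\to[\bA^2/\bG_m]$ is cartesian (\cite[Exercise 10.F]{Ols16}). Hence the pullback of $f$ along the smooth surjective atlas $\bA^2\to[\bA^2/\bG_m]$ is the morphism of schemes $F$, which gives representability at once with no stabilizer analysis, and the fiber over the geometric point determined by $(\beta_4,\beta_6)$ is literally $F^{-1}(\beta_4,\beta_6)=V(\alpha_4-\beta_4,\ \alpha_3^2-\alpha_2^3-\alpha_2\alpha_4-\beta_6)$, which upon eliminating $\alpha_4$ and renaming $(\alpha_2,\alpha_3)=(x,y)$ is $V(y^2-x^3-\beta_4x-\beta_6)$. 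Your third step, checking that these affine cubics are genus $1$ curves without a point, is fine and agrees with the paper.
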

\begin{proof}
From \cite[Exercise 10.F]{Ols16}, the following diagram is cartesian:
$$\xymatrix{\mathbb{A}^3 \ar[d] \ar[r]^F & \mathbb{A}^2\ar[d]\\[\mathbb{A}^3/\bG_m] \ar[r] & [\mathbb{A}^2/\bG_m].}$$
In particular, the geometric fibers of $f$ are those of $F$, as $\mathbb{A}^3\to [\mathbb{A}^3/\bG_m]$ is surjective. Therefore it suffices to check the desired statement for $F$. The fiber of a point $(\beta_4,\beta_6)$ is $V(\alpha_4 - \beta_4, \alpha_3^2 - \alpha_2^3 -\alpha_4\alpha_2 - \beta_6) \subseteq \bA^3_{\alpha_2, \alpha_3, \alpha_4}$ which in turn, by setting $ x = \alpha_2$ and $y = \alpha_3$, is isomorphic to $V(y^2-x^3 - \beta_4x -\beta_6)\subseteq \mathbb{A}^2_{x,y}$.
\end{proof}
We now explain the relevance of $f$ in terms of explicit equations for marked elliptic curves. Following \cite[Section III.3]{Silverman}, one can check that if $k$ is a field with 6 invertible, any elliptic curve with two marked points can be written as 
\begin{align}y^2z + a_3yz^2 = x^3 + a_2x^2z + a_4xz^2\subseteq \bP^2\times \bA^3\end{align}
where $a_2$, $a_3$ and $a_4$ are coordinates on $\mathbb{A}^3$, and the two marked points are $[0,1,0]$ and $[0,0,1]$. 
One can check that, given six choices $(a_2,a_3,a_4)$ and $(a_2',a_3',a_4')$, the two elliptic curves \begin{center}$y^2z + a_3yz^2 = x^3 + a_2x^2z + a_4xz^2$ and $y^2z + a_3'yz^2 = x^3 + a_2'x^2z + a_4'xz^2$\end{center} are isomorphic if and only if there is an element $\lambda \in \bG_m$ such that $a_2 = \lambda^2 a_2'$, $a_3 = \lambda^3 a_3'$ and $a_4 = \lambda^4 a_4'$.
Therefore, it is natural to consider an open substack of $\calP(2,3,4)$ as a moduli space for ellpitic curves with two marked points.

Similarly, recall that every elliptic curve can be written as $y^2z = x^3 + \beta_4xz^2 + \beta_6z^3$. This in turn gives the isomorphism $\overline{\calM}_{1,1}\cong \calP(4,6)$ (see for example \cite[Remark at page 627]{EG98}). Therefore, since $\calM_{1,2}$ maps to $\overline{\calM}_{1,1}$, it is natural to
try to obtain such a map by writing $y^2z + a_3yz^2 = x^3 + a_2x^2z + a_4xz^2$ in its Weierstrass form.

 If we perform the change of variables $X:= x + \frac{a_2}{3}$, $Y = y + \frac{a_3}{2}$ and $Z=z$, equation (1) becomes:
$$Y^2Z = X^3 + Z^2X\left(a_4 - \frac{a_2^2}{3}\right) + Z^3\left(\frac{a_3^2}{4} - \frac{a_2^3}{27} + \frac{a_2^3}{9} - \frac{a_4a_2}{3}\right).$$
Defining $\alpha_2 := \frac{a_2}{3}$, $\alpha_3 := \frac{a_3}{2}$ and $\alpha_4:=a_4 - \frac{a_2^2}{3}$; we obtain the desired Weierstrass equation:
$$Y^2Z = X^3 + \alpha_4XZ^2 + Z^3(\alpha_3^2 - \alpha_2^3 -\alpha_2\alpha_4).$$
In particular, $\beta_4 = \alpha_4$ and $\beta_6 = \alpha_3^2 - \alpha_2^3 -\alpha_2\alpha_4$: this explains the definition of $f$.

\subsection{Weighted blow-up of a smooth point on a surface}\label{subsection:weighted:blowup} 

To resolve the indeterminacy of the morphism $\calP(2,3,4)\dashrightarrow \calP(4,6)$, we need to perform a weighted blow-up. We will perform this transformation on a Zariski open subset of $\calP(2,3,4)$ isomorphic to $\Spec(R[x,y]_{x-1})$, so it is useful to understand the weighted blow-up of the ideal $(x,y)$ in $\Spec(R[x,y])$.

Consider the action of $\bG_m$ on $\Spec(R[x,y,u])$ with weights $(w_1,w_2,-1)$ with $w_1, w_2>0$. The resulting quotient stack has a good moduli space map $[\mathbb{A}^3/\bG_m]\to \Spec((\mathbb{A}^3)^{\bG_m})$
given by taking the invariant ring $R[x,y,u]^{\bG_m}$ (see \cite[Definition 4.1]{GMS_jarod} for the definition of good moduli space).
\begin{observation}
The ring of invariants $R[x,y,u]^{\bG_m}$ is $R[u^{w_1}x,u^{w_2}y]$. Indeed, consider an invariant polynomial $Q = \sum_{i,j,k}a_{i,j,k}x^iy^ju^k\in R[x,y,u]$, let $i_0,j_0,k_0$ such that $a_{i_0,j_0,k_0}\neq 0$, and let $p\gg 0$ be a prime number. We will denote by $[a_{i_0,j_0,k_0}]$ the reduction of $a_{i_0,j_0,k_0}$ modulo $p$. Then as $Q$ is invariant, for every $\lambda \in \mathbb{F}_p^*$, one has $[a_{i_0,j_0,k_0}] = \lambda^{i_0w_1 + j_0w_2 - k_0} [a_{i_0,j_0,k_0}]$. As we can choose $p$ big enough, one has $k_0=i_0w_1 + j_0w_2$, or in other terms $a_{i_0,j_0,k_0}x^{i_0}y^{j_0}u^{k_0} = a_{i_0,j_0,k_0}(xu^{w_1})^{i_0}(yu^{w_2})^{j_0}$.
\end{observation}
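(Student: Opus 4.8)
The plan is to avoid testing invariance against group elements and instead identify the $\bG_m$-action with a $\mathbb{Z}$-grading, then read off the invariant ring as the degree-zero piece. Giving the $\bG_m$-action with weights $(w_1,w_2,-1)$ on $\Spec(R[x,y,u])$ is the same data as giving the $\mathbb{Z}$-grading of $R$-algebras in which $x$, $y$, $u$ are homogeneous of degrees $w_1$, $w_2$, $-1$; thus a monomial $x^iy^ju^k$ sits in degree $iw_1+jw_2-k$, and $R[x,y,u]=\bigoplus_{d\in\mathbb{Z}}A_d$ where each graded piece $A_d$ is the free $R$-module spanned by the monomials of that degree.

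Next I would verify $R[x,y,u]^{\bG_m}=A_0$. The coaction $c\colon R[x,y,u]\to R[x,y,u]\otimes_R R[t,t^{-1}]$ sends a homogeneous element $a\in A_d$ to $a\otimes t^d$, so a general element $a=\sum_d a_d$ with $a_d\in A_d$ satisfies $c(a)=\sum_d a_d\otimes t^d$. Being $\bG_m$-invariant means $c(a)=a\otimes 1$, i.e. $a_d\otimes t^d=a_d\otimes 1$ for every $d$; comparing coefficients of $t^d$ forces $a_d=0$ for $d\neq 0$, while conversely every element of $A_0$ is clearly invariant. This argument works verbatim over the non-field base $R$, so it sidesteps the need to have "enough" elements in $\bG_m$; this is the one place where a little care is needed, and is the main (minor) obstacle.

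Finally I would compute $A_0$ for $A=R[x,y,u]$: it is the free $R$-module on the monomials $x^iy^ju^k$ with $iw_1+jw_2=k$. Since $w_1,w_2>0$, for each $i,j\geq 0$ this equation has the unique nonnegative solution $k=iw_1+jw_2$, and the corresponding monomial is exactly $(xu^{w_1})^i(yu^{w_2})^j$. Hence $A_0$ is spanned over $R$ by the monomials in $xu^{w_1}$ and $yu^{w_2}$, giving $R[x,y,u]^{\bG_m}=A_0=R[u^{w_1}x,u^{w_2}y]$. One moreover reads off that $u^{w_1}x$ and $u^{w_2}y$ are algebraically independent over $R$, since distinct pairs $(i,j)$ yield distinct monomials $(xu^{w_1})^i(yu^{w_2})^j$ in $R[x,y,u]$; after the grading identification this last paragraph is pure monomial bookkeeping.
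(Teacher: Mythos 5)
Your argument is correct, and it takes a genuinely different route from the paper. You identify the $\bG_m$-action with the $\mathbb{Z}$-grading on $R[x,y,u]$ in which $x,y,u$ have degrees $w_1,w_2,-1$, characterize invariance via the coaction condition $c(a)=a\otimes 1$, and read off the invariant ring as the degree-zero piece $A_0$, which is visibly the $R$-span of the monomials $(xu^{w_1})^i(yu^{w_2})^j$. The paper instead works coefficient by coefficient: it reduces a nonzero coefficient of an invariant polynomial modulo a large prime $p$ and tests invariance against all $\lambda\in\mathbb{F}_p^{*}$, forcing $\lambda^{i_0w_1+j_0w_2-k_0}$ to be identically $1$ and hence (for $p$ large) $k_0=i_0w_1+j_0w_2$ --- in effect manufacturing enough group elements in residue fields to detect the weight, which is exactly the issue your comodule formulation sidesteps. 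Your version is the more structural one: it works verbatim over any base ring with no auxiliary primes, and it yields the extra observation that $u^{w_1}x$ and $u^{w_2}y$ are algebraically independent (so the invariant ring really is a polynomial ring, which the paper uses implicitly when identifying $\Spec(R[x,y,u]^{\bG_m})$ with $\bA^2$ in Remark \ref{remark quote pt 2}). The paper's argument, by contrast, is elementary and avoids invoking the dictionary between $\bG_m$-actions and gradings; the one point where your write-up leans on that dictionary ("a $\bG_m$-action is the same data as a $\mathbb{Z}$-grading, and invariants are the elements with $c(a)=a\otimes 1$") is standard for diagonalizable group schemes over an arbitrary base, so there is no gap.
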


\begin{remark}\label{remark quote pt 2}
Identifying the ring of invariants $R[x,y,u]^{\bG_m}$ with $\bA^2$, we can check that:
\begin{enumerate}
    \item The good moduli space map $\pi:[\mathbb{A}^3/\bG_m]\to \mathbb{A}^2$ is an isomorphism on $\pi^{-1}(\mathbb{A}^2\smallsetminus\{0\})$,
    \item The map $f:[\mathbb{A}^3/\bG_m]\to [\mathbb{A}^2/\bG_m]$ defined by $[X,Y,U]\mapsto [X,Y]$, where the action on $\mathbb{A}^2$ is with weights $w_1,w_2$, extends the map $\mathbb{A}^2 \to [\mathbb{A}^2/\bG_m]$ which sends $(a,b)\mapsto [a,b]$, and
    \item The morphism $f$ is smooth and representable.
\end{enumerate}
\end{remark} 
To check point (3) it suffices to observe that being smooth and representable are properties that can be checked smooth locally, and that the following diagram is cartesian, with surjective smooth vertical arrows:
$$\xymatrix{\mathbb{A}^3 \ar[d] \ar[r]^{(a,b,c)\mapsto (a,b)} & \mathbb{A}^2 \ar[d] \\[\mathbb{A}^3/\bG_m] \ar[r]_f & [\mathbb{A}^2/\bG_m].}$$

If we denote by $\sS$ the points of the form $[0,0,u]\in [\mathbb{A}^3/\bG_m]$, we have
$\phi: [\mathbb{A}^3/\bG_m]\smallsetminus \sS \to \calP(w_1,w_2)$.
The following will be the definition that we will adopt for a weighted blow-up of the origin in $\bA^2_S$, and one can give a similar definition for a weighted blow-up of an open neighbourhood of the origin. In Remark \ref{remark_dan_definition_weighted} we observe that it agrees with other definitions in the literature in the case, for example, where the base scheme $S$ is an algebraically closed field of characteristic 0.
\begin{definition}
We will denote  $B^{w_1,w_2}_{(0,0)}\mathbb{A}^2 := [\mathbb{A}^3/\bG_m]\smallsetminus \sS$ the weighted blow-up of $\mathbb{A}^2$ at $(0,0)$.
\end{definition}
\begin{remark}\label{remark map A2 to Pw1,w2} The construction above gives a commutative diagram as follows:
$$\xymatrix{\mathbb{A}^2\smallsetminus \{(0,0)\} \ar[r] \ar@{_{(}->}[dr]& B^{w_1,w_2}_{(0,0)}\mathbb{A}^2 \ar[d] \ar[r]^{\phi} & \calP(w_1,w_2) \ar@{_{(}->}[d]\\ & \mathbb{A}^2 \ar[r] & [\bA^2/\bG_m] }$$
where the top map $\mathbb{A}^2\smallsetminus\{(0,0)\} \to \calP(w_1,w_2)$ sends $(a,b)\mapsto [a,b]$.
\end{remark}

\begin{observation}\label{obs_blowup_has_a_section}
The exceptional divisor of $B^{w_1,w_2}_{(0,0)}\mathbb{A}^2 \to \mathbb{A}^2$ is isomorphic to $\calP(w_1,w_2)$, and its inclusion
$\iota: \calP(w_1,w_2)\hookrightarrow B^{w_1,w_2}_{(0,0)}\mathbb{A}^2$ gives a section to $\phi : B^{w_1,w_2}_{(0,0)}\mathbb{A}^2 \to \calP(w_1,w_2)$. Moreover, $\phi$ is a line bundle. Indeed, the map $f:[\bA^3/\bG_m] \to [\bA^2/\bG_m]$ is a line bundle since it comes from a map $\bA^3\cong \bA^2\times V\to \bA^2$ for a one dimensional representation $V$ of $\bG_m$, and $\phi$ is the restriction of $f$ to $\calP(w_1,w_2)$. Specifically, it is $\calO_{\calP(w_1,w_2)}(-1)$ and $\iota$ is the zero section. In particular, as if $\calL\to \calX$ is a line bundle with zero section $\sigma:\calX\to \calL$ having ideal sheaf $\calI$, then $\sigma^*\calI \cong \calL^{-1}$; the restriction of the ideal sheaf of the exceptional divisor to $\calP(w_1,w_2)$ is isomorphic to $\calO_{\calP(w_1,w_2)}(1)$.
\end{observation}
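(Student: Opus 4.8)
The plan is to establish the four assertions in turn, working throughout inside the presentation $B^{w_1,w_2}_{(0,0)}\mathbb{A}^2 = [\mathbb{A}^3/\bG_m]\smallsetminus\sS$ with coordinates $x,y,u$ of weights $w_1,w_2,-1$, and using the identification $R[x,y,u]^{\bG_m}=R[u^{w_1}x,u^{w_2}y]$ from the preceding Observation, so that the morphism $B^{w_1,w_2}_{(0,0)}\mathbb{A}^2\to\mathbb{A}^2$ of Remark~\ref{remark map A2 to Pw1,w2} is the restriction of the good moduli space map $\pi$.

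First I would pin down the exceptional divisor. Set $E := V(u)\cap B^{w_1,w_2}_{(0,0)}\mathbb{A}^2$. Since $u$ is a section of the line bundle on $[\mathbb{A}^3/\bG_m]$ associated with the weight $-1$ character, $E$ is an effective Cartier divisor; and on $E$ both invariants $u^{w_1}x$ and $u^{w_2}y$ vanish, so $E$ maps to the origin of $\mathbb{A}^2$. Imposing $u=0$ replaces $\mathbb{A}^3$ by $\mathbb{A}^2_{x,y}$ with $\bG_m$ acting with weights $(w_1,w_2)$, and deleting $\sS$ deletes the origin, whence $E\cong[(\mathbb{A}^2_{x,y}\smallsetminus\{0\})/\bG_m]=\calP(w_1,w_2)$; this is assertion (1). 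On the complement $\{u\neq0\}$ the $\bG_m$-action is free and may be rigidified by $u=1$, giving $[\{u\neq0\}/\bG_m]\cong\mathbb{A}^2_{x,y}$ compatibly with $\pi$, so after deleting $\sS$ the map $\pi$ restricts to an isomorphism onto $\mathbb{A}^2\smallsetminus\{0\}$ (cf. Remark~\ref{remark quote pt 2}(1)); thus $E$ really is the exceptional divisor. For assertion (2), the inclusion $\iota$ of $E\cong\calP(w_1,w_2)$ sends $[x,y]\mapsto[x,y,0]$ while $\phi$ sends $[x,y,u]\mapsto[x,y]$, so $\phi\circ\iota=\mathrm{id}$.

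For assertion (3), I would note that $\mathbb{A}^3\cong\mathbb{A}^2_{x,y}\times V$ as $\bG_m$-representations, with $V$ the one-dimensional representation of weight $-1$; descending the equivariant trivial bundle $\mathbb{A}^2_{x,y}\times V\to\mathbb{A}^2_{x,y}$ exhibits $f\colon[\mathbb{A}^3/\bG_m]\to[\mathbb{A}^2/\bG_m]$ as a line bundle with zero section $\{u=0\}$, and restricting to the open substack $\calP(w_1,w_2)\subseteq[\mathbb{A}^2/\bG_m]$ exhibits $\phi$ as a line bundle over $\calP(w_1,w_2)$ with zero section $\iota$. Comparing the weight $-1$ of the fibre coordinate $u$ with the standard dictionary between $\bG_m$-characters and line bundles on weighted projective stacks (for which the total space of $\calO_{\bP^1}(-1)$ is the blow-up of $\mathbb{A}^2$ at the origin) identifies this line bundle with $\calO_{\calP(w_1,w_2)}(-1)$.

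Finally, assertion (4) follows from the general fact that for the zero section $\sigma\colon\calX\to\calL$ of a line bundle, whose image has ideal sheaf $\calI$, one has $\sigma^{*}\calI\cong\calL^{-1}$: the image is an effective Cartier divisor, so $\calI$ is invertible and $\sigma^{*}\calI\cong\calI/\calI^{2}$ is its conormal bundle, which for the zero section of $\calL$ is canonically $\calL^{-1}$. Taking $\calL=\phi$, $\sigma=\iota$, $\calI=\calI_E$ gives $\iota^{*}\calI_E\cong\calO_{\calP(w_1,w_2)}(-1)^{-1}=\calO_{\calP(w_1,w_2)}(1)$. I expect the only delicate point to be bookkeeping of conventions — fixing which line bundle on $\calP(w_1,w_2)$ is attached to the weight-$(-1)$ character so that it matches the paper's normalization of $\calO(1)$ — together with checking that the quotient computations above are valid over $R=\mathbb{Z}[\tfrac16]$ and not only over a field, which is exactly what the preceding Observation's description of the invariant ring provides.
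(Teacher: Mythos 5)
Your proposal is correct and follows essentially the same route as the paper: identifying the exceptional divisor $V(u)\cong\calP(w_1,w_2)$ with $\phi\circ\iota=\mathrm{id}$, exhibiting $\phi$ as the restriction of the line bundle $[\bA^2\times V/\bG_m]\to[\bA^2/\bG_m]$ attached to the weight $-1$ coordinate $u$ (hence $\calO_{\calP(w_1,w_2)}(-1)$ with $\iota$ the zero section), and deducing $\iota^*\calI_E\cong\calO_{\calP(w_1,w_2)}(1)$ from the conormal-bundle fact $\sigma^*\calI\cong\calL^{-1}$. You merely spell out details the paper leaves implicit (the Cartier divisor $V(u)$, the chart $u=1$ over $\bA^2\smallsetminus\{0\}$, and the character/line-bundle dictionary), all of which check out over $R=\mathbb{Z}[\tfrac16]$.
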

\begin{observation}\label{Oss_pull_back_exceptional_gives_iso_on_pic}
From Observation \ref{obs_blowup_has_a_section}, the inclusion of the exceptional divisor $\calP(w_1,w_2)\to B_{(0,0)}^{w_1,w_2}\bA^2$ induces a pull-back map on Picard groups which is an isomorphism
\end{observation}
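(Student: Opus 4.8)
The plan is to read off the statement from Observation~\ref{obs_blowup_has_a_section}. There the morphism $\phi\colon B^{w_1,w_2}_{(0,0)}\bA^2\to\calP(w_1,w_2)$ is identified with the total space of the line bundle $\calO_{\calP(w_1,w_2)}(-1)$, and the inclusion of the exceptional divisor $\iota\colon\calP(w_1,w_2)\hookrightarrow B^{w_1,w_2}_{(0,0)}\bA^2$ is identified with its zero section; in particular $\phi\circ\iota=\mathrm{id}_{\calP(w_1,w_2)}$. Taking Picard groups, $\iota^{*}\circ\phi^{*}=\mathrm{id}_{\Pic(\calP(w_1,w_2))}$, so it suffices to prove that $\phi^{*}\colon\Pic(\calP(w_1,w_2))\to\Pic\!\big(B^{w_1,w_2}_{(0,0)}\bA^2\big)$ is an isomorphism; then $\iota^{*}$ is its inverse and we are done.

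For the isomorphism $\phi^{*}$, I would invoke homotopy invariance of the Picard group along vector bundles: for (the total space of) a line bundle $\calL\to\calX$ over a regular quasi-separated algebraic stack, pullback $\Pic(\calX)\to\Pic(\calL)$ is an isomorphism. Alternatively one can argue concretely. Writing $B^{w_1,w_2}_{(0,0)}\bA^2=[\bA^3/\bG_m]\smallsetminus\sS$, the scheme $\bA^3_R$ is regular with Picard group $0$ and units $R^{\times}$ (as $R=\Z[\tfrac16]$ is a localization of $\Z$), so $\Pic([\bA^3/\bG_m])\cong\widehat{\bG_m}\cong\Z$; since $\sS$ has codimension $2$ in the regular stack $[\bA^3/\bG_m]$, restriction induces $\Pic([\bA^3/\bG_m])\xrightarrow{\ \sim\ }\Pic\!\big(B^{w_1,w_2}_{(0,0)}\bA^2\big)$, and likewise $\Pic(\calP(w_1,w_2))\cong\Z$. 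Finally $\phi^{*}\calO_{\calP(w_1,w_2)}(1)$ restricts along $\iota$ to $\calO_{\calP(w_1,w_2)}(1)$, hence is a generator of $\Pic\!\big(B^{w_1,w_2}_{(0,0)}\bA^2\big)\cong\Z$; so $\phi^{*}$ is a surjection $\Z\to\Z$, hence an isomorphism.

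I expect the only non-formal point to be the homotopy-invariance input — equivalently, that deleting the codimension-$2$ substack $\sS$ does not change the Picard group and that the equivariant Picard group of $\bA^3_R$ is exactly $\widehat{\bG_m}$. Both are standard for regular Noetherian algebraic stacks and here reduce to the corresponding statements for the smooth affine scheme $\bA^3_R$ (the class-group exact sequence, respectively factoriality with only constant units). Granting these, the equality $\iota^{*}\circ\phi^{*}=\mathrm{id}$ together with bijectivity of $\phi^{*}$ forces $\iota^{*}$ to be an isomorphism, which is the assertion.
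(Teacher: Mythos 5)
Your argument is correct and matches what the paper intends: Observation \ref{obs_blowup_has_a_section} identifies $\phi$ with the total space of $\calO_{\calP(w_1,w_2)}(-1)$ and $\iota$ with its zero section, so homotopy invariance of $\Pic$ along the line bundle plus $\phi\circ\iota=\operatorname{Id}$ gives that $\iota^*$ is the inverse of $\phi^*$, exactly as you argue. Your concrete back-up computation (equivariant $\Pic$ of $\bA^3_R$ equals $\widehat{\bG_m}\cong\Z$, unchanged after deleting the codimension-$2$ locus $\sS$) is a sound way to justify the homotopy-invariance input and is consistent with the paper.
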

\begin{lemma}
With the notations above, the morphism  $\pi:B^{w_1,w_2}_{(0,0)}\mathbb{A}^2 \to \mathbb{A}^2$ is proper.
\end{lemma}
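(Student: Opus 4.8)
The plan is to verify the valuative criterion for properness directly, exploiting the explicit presentation $B:=B^{w_1,w_2}_{(0,0)}\mathbb{A}^2=[\mathbb{A}^3/\bG_m]\smallsetminus\sS$. Since $B$ is a quotient of an open subscheme of $\mathbb{A}^3$ by $\bG_m$ and $\mathbb{A}^2$ is affine, $\pi$ is quasi-compact, quasi-separated and of finite type, so it suffices to prove that $\pi$ is separated and universally closed. For this I would use the following reformulation of lifts: identifying $\mathbb{A}^2$ with $\Spec R[u^{w_1}x,u^{w_2}y]$ as in the Observation above, a morphism $\Spec A\to\mathbb{A}^2$ from a DVR $A$ with fraction field $K$ is a pair $(a,b)\in A^2$, and -- since $\bG_m$-torsors over a field are trivial -- a lift $\Spec K\to B$ of it is a triple $(x_0,y_0,u_0)\in K^3$ with $(x_0,y_0)\neq(0,0)$, $u_0^{w_1}x_0=a$ and $u_0^{w_2}y_0=b$, considered up to the substitution $(x_0,y_0,u_0)\mapsto(\lambda^{w_1}x_0,\lambda^{w_2}y_0,\lambda^{-1}u_0)$ with $\lambda\in K^\times$; similarly a lift $\Spec A\to B$ is such a triple with entries in $A$ whose first two entries generate the unit ideal, taken up to $\lambda\in A^\times$.

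For universal closedness I would argue as follows. If $(a,b)=(0,0)$ then necessarily $u_0=0$, so the given $K$-point factors through the exceptional divisor $\calP(w_1,w_2)\hookrightarrow B$; since $\calP(w_1,w_2)\to\Spec R$ is proper and $\Spec A\to\mathbb{A}^2$ factors through the origin, the lift exists. Otherwise $u_0\neq0$, so after rescaling we may take $u_0=1$ and the $K$-point is $(a,b,1)$; now I would pass to a ramified extension $A\subseteq A'$ of valuation rings with $v'=e\cdot v$ and look for a lift of the form $(\lambda^{w_1}a,\lambda^{w_2}b,\lambda^{-1})$ with $v'(\lambda)=m$, which automatically maps to $(a,b)$. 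The requirements on $m$ are $w_1m+e\,v(a)\geq0$, $w_2m+e\,v(b)\geq0$, $m\leq0$, and $\min(w_1m+e\,v(a),w_2m+e\,v(b))=0$; assuming, by symmetry, $w_2v(a)\leq w_1v(b)$, the choice $e=w_1$, $m=-v(a)$ satisfies all of these, and this produces the required $A'$-point of $B$ lifting $(a,b)$. Separatedness is the short step: two lifts over $A$ agreeing over $K$ differ by some $\lambda\in K^\times$, and since in each triple one of the first two entries is a unit of $A$, the relation $0=w_1v(\lambda)$ (or $0=w_2v(\lambda)$) forces $\lambda\in A^\times$, so the two lifts coincide.

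There is no single hard step; the one point that needs care is that the existence half of the valuative criterion for algebraic stacks permits -- and, as the computation above shows, genuinely requires -- replacing $A$ by a ramified extension, after which the matter reduces to the elementary inequalities on valuations written above, together with the properness of $\calP(w_1,w_2)$ over $R$ for the degenerate fibre over the origin. As an alternative one could instead invoke the properness of the coarse (toric) weighted blow-up of the origin of $\mathbb{A}^2$ over $\mathbb{A}^2$ together with the properness of the coarse-moduli-space morphism $B\to B_{\mathrm{coarse}}$, and compose the two; but the direct valuative argument is more self-contained and stays entirely within the explicit presentation already in place.
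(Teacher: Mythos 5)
Your argument is correct, but it takes a genuinely different route from the paper. The paper does not use the valuative criterion at all: it combines $\phi$ and $\pi$ into a single map $B^{w_1,w_2}_{(0,0)}\mathbb{A}^2 \to \mathbb{A}^2\times\calP(w_1,w_2)$, checks by an explicit chart computation (the equivariant map $\calV\to\calU$ inside $\mathbb{A}^3\to\mathbb{A}^4$, restricted to the loci $x\neq 0$ and $y\neq 0$) that this map is \emph{finite}, and then concludes by composing with the proper projection $\mathbb{A}^2\times\calP(w_1,w_2)\to\mathbb{A}^2$, properness of the weighted projective stack being the only non-elementary input. Your proof instead verifies the stacky valuative criterion directly, using the triviality of $\bG_m$-torsors over fields and local rings to describe $K$- and $A$-points of $[(\mathbb{A}^3\smallsetminus V(x,y))/\bG_m]$ as triples up to scaling, and then solving explicit valuation inequalities after a totally ramified extension $A\subseteq A'$ of index $e=w_1$ or $w_2$; you correctly isolate the one genuine subtlety, namely that the existence half of the criterion for stacks requires allowing such extensions (and, in the locally Noetherian finite-type setting at hand, DVRs suffice). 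What your route buys is self-containedness and a transparent picture of exactly which ramification is needed; what the paper's route buys is the stronger structural statement that $(\pi,\phi)$ is finite, with no valuation-theoretic bookkeeping and no appeal to the stack-theoretic valuative criteria. Two spots in your write-up deserve a half-sentence more care, though neither is a gap: in the separatedness step, when the two $A$-lifts have \emph{different} unit entries (say $x_0\in A^\times$ but only $y_1\in A^\times$) you do not get an equality $0=w_i v(\lambda)$ outright, but rather $w_1v(\lambda)=v(x_1)\geq 0$ and $w_2v(\lambda)=-v(y_0)\leq 0$, which together still force $\lambda\in A^\times$; and in the degenerate case $(a,b)=(0,0)$ the extension of the $K$-point of $\calP(w_1,w_2)$ to an $A$-point again only exists after a finite extension of $A$, which the overall criterion permits.
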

\begin{proof}The product of $\phi$ and $\pi$ gives a map $F:B^{w_1,w_2}_{(0,0)}\mathbb{A}^2 \to \mathbb{A}^2\times \calP(w_1, w_2)$. We show that $\pi$ is proper by showing that this map is finite.

Consider the two maps $\mathbb{A}^3\to \bA^2$, $(x,y,z)\mapsto (x,y)$ and $\mathbb{A}^3\to \bA^2$, $(x,y,z)\mapsto (xz^{w_1}, yz^{w_2})$.
They give a map to the fibred product $\mathbb{A}^3\to \bA^4$, given by $(x,y,z)\mapsto (x,y,z^{w_1}x,z^{w_2}y)$. If we denote the coordinates on $\mathbb{A}^4$ by $(X,Y,z,t)$, take the open subset $\calU\subseteq \mathbb{A}^4$ whose complement is $V(X,Y)$. Its preimage, which we denote by $\calV$, is the open subset whose complement is $V(x,y)$. Therefore there is a morphism $\calV\to \calU$.
By restricting this map to $\calV\smallsetminus D(x)\to \calU\smallsetminus D(X)$ and $\calV\smallsetminus D(y)\to \calU\smallsetminus D(Y)$ can check that $\calV\to \calU$ is a finite map.

Let $\bG_m$ act on $\bA^3$ with weights $(w_1, w_2, -1)$ and on $\bA^4$ with weights $(w_1, w_2, 0, 0)$, the open subsets $\calU$ and $\calV$ are invariant, and the corresponding map $\calV\to \calU$ is equivariant. We can identify $B^{w_1,w_2}_{(0,0)}\bA^2 \cong [\calV/\bG_m]$, and $\calP(w_1,w_2)\times \bA^2 \cong [\calU/\bG_m]$, and the corresponding map $[\calV/\bG_m]\to [\calU/\bG_m]$ with $F$. Since a weighted projective stack is proper, and since proper morphisms are stable under base change, the projection $[\calU/\bG_m]\to \bA^2$ is proper. Since a finite morphism is proper, also $[\calV/\bG_m]\cong B^{w_1,w_2}_{(0,0)}\bA^2 \to \bA^2$ is proper. 
\end{proof}
\begin{remark}\label{remark_dan_definition_weighted}
We can check that our description agrees with the classical one (for example, the one in \cite[Section 3]{ATW19}) over a field of characteristic 0 containing the $w_1$ and $w_2$ roots of unity, by giving two charts isomorphic to the ones in \cite[Section 3.4]{ATW19}. We construct only the one corresponding to $x\neq 0$, the one corresponding to $y \neq 0$ is analogous. Consider the map $\alpha:\mathbb{A}^2 \to \mathbb{A}^3$, $(a,b)\mapsto (1,a,b)$ and the group
homomorphism $\beta: \bmu_{w_1}\to \bG_m$, $\xi \mapsto \xi^{-i}$ where $\xi$ is a $w_1$-th root of 1. If $\bmu_{w_1}$ acts on $\mathbb{A}^2$ with weights $(-w_1, 1)$, the maps $\alpha$ and $\beta$ gives the desired chart $[\mathbb{A}^2/\bmu_{w_1}]\to [\mathbb{A}^3/\bG_m]$ as in \cite[Section 3.4]{ATW19}.
\end{remark}

\subsection{Weighted blow-up for $\overline{\calM}_{1,2}$} We now return to the setting of Subsection \ref{subsection_desc_of_the_space}.

The morphism $f:[\bA^3/\bG_m]\to [\bA^2/\bG_m]$ at the end of Section \ref{subsection_desc_of_the_space} gives a rational map $\calP(2,3,4)\dashrightarrow \calP(4,6).$ This rational map is not defined at the fiber of $f$ at $[0,0]$, namely at the the locus $\{[  s^2, s^3, 0]: \text{ }s\in \bG_m\} \subseteq \calP(2,3,4)$. 
\begin{notation}\label{notation_point_z}
We will denote by $Z$ the locus $\{[  s^2, s^3, 0]: \text{ }s\in \bG_m\} \subseteq \calP(2,3,4)$. Observe that over a field, this is the point $[1,1,0]$.
\end{notation}
We will now resolve the indeterminacy of $\calP(2,3,4)\dashrightarrow \calP(4,6)$. As mentioned at the beginning of Subsection \ref{subsection:weighted:blowup}, it is useful to understand the local description of $\calP(2,3,4)$ around $Z$:

\begin{lemma}\label{lemma_local_desc_of_calP_around_z}
There is a Zariski neighbourhood $U$ of $Z$ and an isomorphism $\Spec(R[x,y]_{x-1})\cong U$, such that the composition $\Spec(R[x,y]_{x-1})\cong U \xrightarrow{f}[\mathbb{A}^2/\bG_m]$ sends a point $(a,b)\mapsto [a,b]$.
\end{lemma}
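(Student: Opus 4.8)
The plan is to exhibit $\calP(2,3,4)$ as glued from three standard affine charts, identify the one containing $Z$, and check directly that it has the claimed form.

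First I would recall that $\calP(2,3,4) = [(\bA^3 \smallsetminus \{0\})/\bG_m]$ with coordinates $\alpha_2, \alpha_3, \alpha_4$ of weights $2,3,4$. The point $Z = \{[s^2, s^3, 0] : s \in \bG_m\}$ lies in the open locus where $\alpha_2 \neq 0$; call this open substack $U := [D(\alpha_2)/\bG_m] \subseteq \calP(2,3,4)$. The plan is to trivialize the $\bG_m$-action on $D(\alpha_2)$: since $\alpha_2$ has weight $2$, which is invertible in $R$ only after adjoining a square root, one should instead use the invariant functions. The ring of $\bG_m$-invariants of $R[\alpha_2^{\pm 1}, \alpha_3, \alpha_4]$ is generated by $x := \alpha_3^2/\alpha_2^3$ (weight $6 - 6 = 0$) and $y := \alpha_4/\alpha_2^2$ (weight $4 - 4 = 0$); one checks as in the Observation above (reduction mod a large prime $p$, using that $\alpha_2$ is now a unit) that $R[\alpha_2^{\pm 1}, \alpha_3, \alpha_4]^{\bG_m} = R[x,y]$. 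To see that the quotient stack $U$ is in fact this affine scheme and not merely has it as good moduli space, one notes that the section $s \mapsto (s^2, s^3, s^4 y)$ shows the stabilizers are trivial on the locus $\alpha_2 \neq 0$ once we also remember $\alpha_3$: more precisely, over the chart one may rescale by the unique $\lambda$ with $\lambda^2 \alpha_2 = 1$ up to sign, but the residual $\bmu_2$ acts freely unless $\alpha_3 = 0$ — so the cleanest argument is to use $\alpha_3/\alpha_2^{\text{(something)}}$... Here the genuinely careful approach is: the map $\bA^1_{x} \times \bA^1_y \times \bG_m \to D(\alpha_2) \smallsetminus V(\alpha_3)$, $(x, y, t) \mapsto (t^2 x^{?}, \dots)$ — I would instead simply invoke that on $D(\alpha_2) \smallsetminus V(\alpha_3)$ the $\bG_m$-action is free (an element $\lambda$ fixing a point with $\alpha_2, \alpha_3 \neq 0$ satisfies $\lambda^2 = \lambda^3 = 1$, so $\lambda = 1$), hence the quotient is a scheme there; and handle $V(\alpha_3) \cap D(\alpha_2)$, which maps to $x = 0$, by the same reduction-mod-$p$ invariant-theory argument to conclude the good moduli space is $\Spec R[x,y]$ and the quotient map is an isomorphism (the $\bmu_2$-gerbe structure along $\alpha_3 = 0$ is killed because $2$ is invertible — this is where one uses that the weights are among $2,3,4$ and $6$ is invertible). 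Thus $U \cong \Spec R[x,y]$, and under this identification $Z$, which was $[s^2,s^3,0]$, i.e. $\alpha_3^2/\alpha_2^3 = 1$, $\alpha_4/\alpha_2^2 = 0$, becomes the locus $(x,y) = (1,0)$; localizing at this point — or rather, taking the open neighbourhood where $x$ is invertible in a neighbourhood of $x=1$, i.e. $\Spec R[x,y]_{x-1}$ as in the statement — gives the asserted isomorphism.

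Next I would track the map $f$. By construction $f : [\bA^3/\bG_m] \to [\bA^2/\bG_m]$ sends $[\alpha_2, \alpha_3, \alpha_4] \mapsto [\alpha_4,\ \alpha_3^2 - \alpha_2^3 - \alpha_2 \alpha_4]$, the target having weights $4, 6$. Restricting to $U$ and using the trivialization $\alpha_2 = 1$ (legitimate for computing the composite to $[\bA^2/\bG_m]$, since we may work on the chart $D(\alpha_2)$ and then the point of $[\bA^2/\bG_m]$ only depends on the orbit), the image of the point with invariants $(x,y)$ is $[\alpha_4,\ \alpha_3^2 - 1 - \alpha_4]$ where $\alpha_4 = y$, $\alpha_3^2 = x$ (after the rescaling making $\alpha_2 = 1$), i.e. $[y,\ x - 1 - y]$. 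Hmm — this is $[y, x-1-y]$, not $[a,b] = [x,y]$ as the statement claims, so the isomorphism $\Spec R[x,y]_{x-1} \cong U$ must be chosen more cleverly: one should relabel coordinates. Rename $a := \alpha_4/\alpha_2^2$ and $b := (\alpha_3^2 - \alpha_2^3 - \alpha_2\alpha_4)/\alpha_2^3$; these are again a pair of algebraically independent invariant generators of $R[\alpha_2^{\pm}, \alpha_3, \alpha_4]^{\bG_m}$ (the change of variables $x = b + 1 + a$... wait, $b = x - 1 - a$ with $a = y$, so $(a,b) \leftrightarrow (x,y)$ is an invertible affine change over $R$), and $Z$ is still cut out by $a = 0$, $b = -1$ (since on $Z$, $\alpha_3^2 = \alpha_2^3$, $\alpha_4 = 0$ gives $b = -1$), so localizing at $b + 1$ invertible gives $\Spec R[a,b]_{b+1}$; renaming $x := -b$, $y := a$ puts $Z$ at $(1,0)$ and makes $f$ send $(x,y) = (-b, a) \mapsto [a, b] = [y, -x]$... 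I would sort out the exact linear normalization so that $f$ literally reads $(a,b) \mapsto [a,b]$ — the content is only that $f|_U$ is, up to an $R$-linear automorphism of $\bA^2$, the canonical map $\bA^2 \to [\bA^2/\bG_m]$, and choosing the isomorphism $U \cong \Spec R[x,y]_{x-1}$ to absorb that automorphism is a formality.

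The main obstacle I expect is the first point: verifying that the quotient stack $U = [D(\alpha_2)/\bG_m]$ is actually a scheme (isomorphic to its good moduli space), rather than carrying a residual $\bmu_2$-gerbe structure along $\alpha_3 = 0$. The resolution is exactly that $6$, hence $2$, is invertible on $S$: one can use the substitution $\alpha_3 \mapsto$ an invariant coordinate only after the $\bmu_2$-ambiguity is neutralized, and more concretely one shows the action of $\bG_m$ on $D(\alpha_2) \smallsetminus \{0\}$ has trivial stabilizers away from $\alpha_3 = 0$ and that the gerbe along $\alpha_3 = 0$ is trivial because taking square roots of units is étale-locally possible and the relevant $\bmu_2$-torsor splits — equivalently, the explicit section $s$ in Notation \ref{notation_point_z} extends. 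Everything else — the invariant ring computation, the identification of $Z$, and tracking $f$ — is routine given the Observation and Remark already established in Subsection \ref{subsection:weighted:blowup}.
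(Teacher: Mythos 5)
Your central step fails. You claim the full chart $U=[D(\alpha_2)/\bG_m]$ is isomorphic to $\Spec R[x,y]$ (its ring of invariants), and you propose to deal with the locus $\alpha_3=0$ by saying "the $\bmu_2$-gerbe structure along $\alpha_3=0$ is killed because $2$ is invertible." That is not a gap one can close: a point $(\alpha_2,0,\alpha_4)$ with $\alpha_2\neq 0$ has stabilizer $\bmu_2$ (the weights $2$ and $4$ are even), so along $x=0$ the quotient stack $[D(\alpha_2)/\bG_m]$ has honest nontrivial automorphism groups. These are not a gerbe over the good moduli space that could be trivialized, and no hypothesis on the base removes them — with $6$ invertible, $\bmu_2$ is just the constant group $\Z/2\Z$, and the good moduli space map $[D(\alpha_2)/\bG_m]\to \Spec R[x,y]$ is an isomorphism only away from $x=0$. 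The correct move is the one you mention and then abandon: shrink to $\alpha_2\alpha_3\neq 0$, where (as you observe) the $\bG_m$-action is free; the quotient there is the scheme $\Spec R[x,y]_x$, i.e.\ one inverts $x$, not $x-1$, and this still contains $Z$, which sits at $(x,y)=(1,0)$. Note also that in your coordinates $\Spec R[x,y]_{x-1}$ is the locus $x\neq 1$, which does \emph{not} contain $Z$; the "$x-1$" of the statement refers to the coordinates obtained only after the final change of variables placing $Z$ at the origin, and your proposal conflates the two coordinate systems. This is essentially what the paper does: it passes to $\calD(X_2X_3)$, which has trivial stabilizers, hence is an algebraic space equal to its coarse space $\Spec R[x,y]_x$, and then performs one affine change of variables.

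The second gap is in how you compute $f$ on the chart. "Rescaling so that $\alpha_2=1$" determines the image of a geometric point, but it is not a trivialization over the chart (it requires a square root of $\alpha_2$, which has weight $2$), and what the subsequent blow-up argument needs is $f|_U$ as a morphism of stacks, not a formula on points. The honest computation uses an actual section of the $\bG_m$-torsor $\{\alpha_2\alpha_3\neq0\}\to \Spec R[x,y]_x$, for instance $(x,y)\mapsto(x,x^2,x^2y)$ — which again exists only after inverting $x$ — and gives $(a,b)\mapsto[a^2b,\;a^3(a-1-b)]$; this agrees with $[b,\,a-1-b]$ on geometric points but differs from it by a twist by a square root of $a$, so near $Z$ one rescales by a unit such as $u=a^{-1}$ (legitimate since $a=1$ along $Z$) and checks that the resulting map $(a,b)\mapsto(a^{-2}b,\,a^{-3}(a-1-b))$ to $\bA^2$ is an open immersion in a neighbourhood of $Z$ (its Jacobian at $(1,0)$ is invertible and the fiber through $Z$ is just $Z$), after which a coordinate change makes the composite literally $(a,b)\mapsto[a,b]$. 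Your second paragraph gestures at this normalization but leaves it unresolved — and contains a slip: on $Z$ one has $b=(\alpha_3^2-\alpha_2^3-\alpha_2\alpha_4)/\alpha_2^3=0$, not $-1$ — whereas this bookkeeping, together with the shrinking of the chart in the first point, is exactly where the content of the lemma lies.
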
\begin{proof}
The coarse space of $\calP(2,3,4)$ is the usual weighted projective space $P(2,3,4)$, let $\pi$ be the coarse space morphism.
Let $X_2$, $X_3$, $X_4$ be the coordinates of degree 2, 3 and 4 in $P(2,3,4)$. Given a homogeneous polynomial $f$ denote by $D(f)$ the open subset of $P(2,3,4)$ where $f \neq 0$, and with $\calD(f):=\pi^{-1}(D(f))$ its preimage in $\calP(2,3,4)$. Then $Z$ lies in $D(X_2)\cong \mathbb{A}^2_{x,y}$ where $(x,y)=(\frac{X^2_3}{X_2^3},\frac{X_4}{X_2^2})$. The only points with non-trivial stabilizers on $\calD(X_2)$ are the points of the form $[1,0,x]$, and they have a $\bmu_2$-stabilizer. Then $\calD(X_2X_3)$ is an algebraic space, and so $\calD(X_2X_3) \cong D(X_2X_3)\cong \Spec(R[x,y]_x)$.

The rational morphism $\calP(2,3,4)\to [\mathbb{A}^2/\bG_m]$ restricted to $\calD(X_2,X_3)$ and composed with $\calD(X_2,X_3) \cong \Spec(R[x,y]_x)$ gives $\Spec(R[x,y]_x)\to  [\mathbb{A}^2/\bG_m]$, which sends $(a,b)\mapsto [b,a-b-1]$. Then it suffices to perform the change of variables $x \mapsto y$ and $y\mapsto x+y+1$ to prove the result.
\end{proof}
\begin{prop}
If we perform a blow-up $B_Z^{(4,6)}\calP(2,3,4) \to \calP(2,3,4)$ with weights 4 and 6 at $Z$, we can lift the rational map $\calP(2,3,4)\dashrightarrow \calP(4,6)$ to get 
 $\pi:B_Z^{(4,6)}\calP(2,3,4) \to \calP(4,6)$:
 $$\xymatrix{B_Z^{(4,6)}\calP(2,3,4)  \ar[d] \ar@{.>}[rr]^\pi & & \calP(4,6)\ar@{^{(}->}[d]\\ \calP(2,3,4) \ar@{^{(}->}[r] \ar[r] & [\mathbb{A}^3/\bG_m]\ar[r] & [\mathbb{A}^2/\bG_m] }$$
\end{prop}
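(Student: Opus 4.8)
The plan is to reduce the global statement on $\calP(2,3,4)$ to the local model studied in Subsection~\ref{subsection:weighted:blowup}, using the Zariski chart from Lemma~\ref{lemma_local_desc_of_calP_around_z}. First I would recall that, by Lemma~\ref{lemma_local_desc_of_calP_around_z}, there is a Zariski open $U \supseteq Z$ with an isomorphism $U \cong \Spec(R[x,y]_{x-1})$ under which the rational map $\calP(2,3,4) \dashrightarrow [\mathbb{A}^2/\bG_m]$ restricts to the morphism $(a,b) \mapsto [a,b]$; in particular $Z$ corresponds to the locus $(x,y) = (0,0)$ (i.e.\ the point $[1,1,0]$, matching Notation~\ref{notation_point_z}). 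So the indeterminacy of $\calP(2,3,4) \dashrightarrow \calP(4,6)$ is entirely concentrated in this chart, and the weighted blow-up $B_Z^{(4,6)}\calP(2,3,4)$ is obtained by glueing $B_{(0,0)}^{4,6}\mathbb{A}^2$ (intersected with the open $\Spec(R[x,y]_{x-1})$) to $\calP(2,3,4) \smallsetminus Z$ along their common open $U \smallsetminus Z \cong (\mathbb{A}^2 \smallsetminus \{(0,0)\})_{x-1}$.

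Next I would produce the morphism $\pi$ by glueing two morphisms to $\calP(4,6)$. Away from $Z$ the rational map is already a morphism $\calP(2,3,4) \smallsetminus Z \to \calP(4,6)$, since its indeterminacy locus is exactly $Z$ (the fiber of $f$ over $[0,0]$, as computed at the start of this subsection). Over the blow-up chart, Remark~\ref{remark map A2 to Pw1,w2} gives a morphism $\phi : B_{(0,0)}^{4,6}\mathbb{A}^2 \to \calP(4,6)$ extending $(a,b) \mapsto [a,b]$; restricting $\phi$ to the open subset of $B_{(0,0)}^{4,6}\mathbb{A}^2$ lying over $\Spec(R[x,y]_{x-1})$ and precomposing with the identification of that open subset with the corresponding chart of $B_Z^{(4,6)}\calP(2,3,4)$ gives the local piece. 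On the overlap $U \smallsetminus Z$ both morphisms send $(a,b) \mapsto [a,b]$, so they agree, and since $\calP(4,6)$ is (in particular) a scheme-theoretically separated stack these glue to a single morphism $\pi : B_Z^{(4,6)}\calP(2,3,4) \to \calP(4,6)$. Finally, the commutativity of the displayed square reduces, again by glueing, to the commutativity of the square in Remark~\ref{remark map A2 to Pw1,w2} on the blow-up chart together with the trivial identity (the composite $\calP(2,3,4) \hookrightarrow [\mathbb{A}^3/\bG_m] \to [\mathbb{A}^2/\bG_m]$ is $f$ by construction) away from $Z$.

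The one genuinely delicate point is the compatibility of the \emph{definition} of $B_Z^{(4,6)}\calP(2,3,4)$ — which a priori is a blow-up of a neighbourhood of $Z$ in $\calP(2,3,4)$, a point (or $\bmu_2$-gerbe-like locus over a general base) on a two-dimensional Artin stack with non-trivial stabilizers — with the explicit local model $B_{(0,0)}^{w_1,w_2}\mathbb{A}^2 = [\mathbb{A}^3/\bG_m] \smallsetminus \sS$ on which Remark~\ref{remark map A2 to Pw1,w2} is stated. Here I would invoke that, because $Z \subseteq \calD(X_2X_3)$ and $\calD(X_2X_3)$ is an algebraic space isomorphic to $\Spec(R[x,y]_x)$ (hence the chart $\Spec(R[x,y]_{x-1})$ around $Z$ is genuinely an honest smooth affine scheme over $R$, not just a stack quotient), the weighted blow-up is literally performed on a smooth affine $R$-scheme, so the construction of Subsection~\ref{subsection:weighted:blowup} applies verbatim and the local model is the correct one. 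The remainder is bookkeeping: checking the two morphisms agree on the overlap (both are $(a,b) \mapsto [a,b]$) and that the glued object has the claimed map down to $\calP(2,3,4)$ restricting to the blow-down on the chart and the identity elsewhere.
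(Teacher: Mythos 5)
Your proposal is correct and follows essentially the same route as the paper: restrict to the Zariski chart $U\cong\Spec(R[x,y]_{x-1})$ from Lemma \ref{lemma_local_desc_of_calP_around_z} (where $Z$ becomes $(0,0)$ and the map becomes $(a,b)\mapsto[a,b]$), apply the local weighted blow-up model and Remark \ref{remark map A2 to Pw1,w2} to lift there, and glue with the already-defined morphism on $\calP(2,3,4)\smallsetminus Z$. Your extra care about why the local model applies (the chart being an honest affine scheme since $\calD(X_2X_3)$ is an algebraic space) and about the glueing on the overlap only makes explicit what the paper leaves implicit.
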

\begin{proof}As the indeterminacy locus of $\calP(2,3,4)\to \calP(4,6)$ is $Z$, to construct $\pi$ we can work Zariski locally around $Z$. From Lemma \ref{lemma_local_desc_of_calP_around_z} there is a Zariski open neighbourhood $Z\in U\hookrightarrow \calP(2,3,4)$ of $Z$ as follows: \begin{equation}\label{diagram for lifting}
    \xymatrix{\calP(2,3,4)\ar[rr]^-{f|_{\calP(2,3,4)}} && [\bA^2/\bG_m] \\ U \ar@{^{(}->}[u]^{\calD(X_2X_3)} \ar[rr]^-{f|_U} && [\bA^2/\bG_m]\ar[u]_{\operatorname{Id}} \\ \bA^2_{x-1}\ar[u]^{\text{ Lemma } \ref{lemma_local_desc_of_calP_around_z}}_\cong\ar[rr]_-{(a,b)\mapsto [a,b]} && [\bA^2/\bG_m]. \ar[u]_{\operatorname{Id}}} 
\end{equation} Then in a neighbourhood of $Z$, the map $\calP(2,3,4)\to [\bA^2/\bG_m]$ agrees with the restriction to $\bA^2_{x-1}$ of the map $\bA^2\to [\bA^2/\bG_m]$ described in point (2) of Remark \ref{remark quote pt 2}. 
Therefore, proceeding as in Remark \ref{remark map A2 to Pw1,w2}, we can perform a blow-up at $(0,0)$ in $\bA^2_{x-1}$ with weights $(4,6)$ to lift $(a,b)\mapsto [a,b]$:
\begin{equation}
    \xymatrix{&\calP(2,3,4)\ar[rr]^-{f|_{\calP(2,3,4)}} && [\bA^2/\bG_m] \\& U \ar@{^{(}->}[u]^{\calD(X_2X_3)} \ar[rr]^-{f|_U} && [\bA^2/\bG_m]\ar[u]_{\operatorname{Id}} \\ &\bA^2_{x-1}\ar[u]^{\text{ Lemma } \ref{lemma_local_desc_of_calP_around_z}}_\cong\ar[rr]_-{(a,b)\mapsto [a,b]} && [\bA^2/\bG_m]. \ar[u]_{\operatorname{Id}}\\\bA^2_{x-1}\smallsetminus \{(0,0)\}\ar[r]\ar@{^{(}->}[ur]&
    B_{(0,0)}^{4, 6}\bA^2_{x-1} \ar[u]\ar[rr] && \calP(4,6)\ar@{^{(}->}[u]} 
\end{equation}
\end{proof}

\begin{observation}
Observe that $B_Z^{(4,6)}\calP(2,3,4)\to \calP(2,3,4)$ and $\calP(2,3,4)\to \Spec(\mathbb{Z}[\frac{1}{6}])$ are proper, so also $B_Z^{(4,6)}\calP(2,3,4)\to \Spec(\mathbb{Z}[\frac{1}{6}])$ is proper.
\end{observation}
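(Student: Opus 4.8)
The plan is to realize $B_Z^{(4,6)}\calP(2,3,4)\to\Spec(\Z[\tfrac{1}{6}])$ as the composition of the weighted blow-up morphism with $\calP(2,3,4)\to\Spec(\Z[\tfrac{1}{6}])$, and then to quote the stability of properness under composition. Thus the content reduces to checking that each of the two factors is proper.

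The morphism $\calP(2,3,4)\to\Spec(\Z[\tfrac{1}{6}])$ is proper because a weighted projective stack is proper over its base — the same input already used, for $\calP(w_1,w_2)$, in the proof that $B^{w_1,w_2}_{(0,0)}\bA^2\to\bA^2$ is proper. For the weighted blow-up morphism $B_Z^{(4,6)}\calP(2,3,4)\to\calP(2,3,4)$ I would check properness Zariski-locally on the target, which is legitimate since properness is Zariski- (indeed fppf-) local on the target. Cover $\calP(2,3,4)$ by the open substack $\calP(2,3,4)\smallsetminus Z$ together with the neighbourhood $U\cong\Spec(R[x,y]_{x-1})$ of $Z$ from Lemma~\ref{lemma_local_desc_of_calP_around_z}. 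Over $\calP(2,3,4)\smallsetminus Z$ the blow-up morphism is an isomorphism, hence proper. Over $U$, by the construction in the Proposition above together with Lemma~\ref{lemma_local_desc_of_calP_around_z}, the morphism is identified with the weighted blow-up $B^{4,6}_{(0,0)}\bA^2_{x-1}\to\bA^2_{x-1}$, which is proper by the properness lemma of Subsection~\ref{subsection:weighted:blowup}. Hence the blow-up morphism is proper, and composing with the previous step concludes.

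There is no genuine obstacle here once the earlier results are in place: the only point requiring a line of care is the bookkeeping behind the definition of $B_Z^{(4,6)}\calP(2,3,4)$, namely that it is glued from $\calP(2,3,4)\smallsetminus Z$ and $B^{4,6}_{(0,0)}\bA^2_{x-1}$ along $\bA^2_{x-1}\smallsetminus\{(0,0)\}\cong U\smallsetminus Z$, so that these two local models really do cover the blow-up morphism relative to the chosen open cover of $\calP(2,3,4)$. Granting this, properness of each local model, plus locality on the target, plus composition, yields the claim.
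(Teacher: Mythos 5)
Your argument is correct and is exactly the reasoning the paper leaves implicit in this Observation: properness of the blow-down (checked Zariski-locally on the target using the local model $B^{4,6}_{(0,0)}\bA^2_{x-1}\to\bA^2_{x-1}$ and the properness lemma of Subsection \ref{subsection:weighted:blowup}), properness of the weighted projective stack over the base, and stability of properness under composition. The paper states this without proof, so your write-up simply supplies the routine details of the same approach.
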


\subsubsection{Isomorphism $B_Z^{(4,6)}\calP(2,3,4) \cong \overline{\calM}_{1,2}$} We are ready to prove that $B_z^{(4,6)}\calP(2,3,4) \cong \overline{\calM}_{1,2}$. First we produce a morphism $B_z^{(4,6)}\calP(2,3,4) \to \overline{\calM}_{1,2}$ by studying $\pi:B_Z^{(4,6)}\calP(2,3,4) \to \calP(4,6)$.
\begin{prop}\label{prop_family_of_ellpitic} The morphism
$\pi$ is a family of genus 1 curves with a section.  
\end{prop}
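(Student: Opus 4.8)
The plan is to verify in turn that $\pi$ is proper, that it admits a section, that it is flat, and that each of its geometric fibres is a reduced connected curve of arithmetic genus one (in fact at worst nodal).

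\emph{Properness and the section.} Properness of $\pi$ is immediate from the preceding observation: $B_Z^{(4,6)}\calP(2,3,4)\to\Spec(\Z[\tfrac16])$ is proper and $\calP(4,6)\to\Spec(\Z[\tfrac16])$ is separated. For the section, recall from the construction of $\pi$ that on a Zariski neighbourhood of $Z$ the morphism $\pi$ is the map $\phi\colon B_{(0,0)}^{4,6}\bA^2\to\calP(4,6)$ of Remark \ref{remark map A2 to Pw1,w2}; by Observation \ref{obs_blowup_has_a_section} this $\phi$ is a line bundle and the inclusion of the exceptional divisor $E\cong\calP(4,6)$ is its zero section. In particular $\pi|_E\colon E\to\calP(4,6)$ is an isomorphism, so $\sigma:=(\pi|_E)^{-1}$ is a section of $\pi$, with image $E$. (Once the fibres are identified below, $\sigma(t)$ will be the point at infinity of the Weierstrass fibre over $t$.)

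\emph{Flatness.} I would check flatness on the open cover of $B_Z^{(4,6)}\calP(2,3,4)$ given by the complement of $E$ together with a neighbourhood of $E$. On the complement of $E$, which is isomorphic to $\calP(2,3,4)\smallsetminus Z$, the morphism $\pi$ is the restriction of $f\colon[\bA^3/\bG_m]\to[\bA^2/\bG_m]$. By the cartesian square in the proof of Lemma \ref{lemma_fibers_elliptic_curves_without_a_pt}, the base change of $f$ along the faithfully flat map $\bA^2\to[\bA^2/\bG_m]$ is $F\colon\bA^3\to\bA^2$, and $F$ is flat: setting $\alpha_4=\beta_4$ presents $R[\alpha_2,\alpha_3,\alpha_4]$ as $R[\beta_4,\beta_6][\alpha_2,\alpha_3]/(\alpha_3^2-\alpha_2^3-\alpha_2\beta_4-\beta_6)$, which is a free $R[\beta_4,\beta_6]$-module. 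Hence $f$ is flat, and so is $\pi$ on the complement of $E$. On a neighbourhood of $E$ the morphism $\pi$ is the line bundle $\phi$, which is flat. Since flatness is local on the source, $\pi$ is flat.

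\emph{Geometric fibres.} Fix a geometric point $t=[\beta_4,\beta_6]$ of $\calP(4,6)$, so $(\beta_4,\beta_6)\neq(0,0)$. The fibre $\pi^{-1}(t)$ is covered by $\pi^{-1}(t)\cap(B_Z^{(4,6)}\calP(2,3,4)\smallsetminus E)$, which by Lemma \ref{lemma_fibers_elliptic_curves_without_a_pt} is $C_t^\circ:=V(y^2-x^3-\beta_4x-\beta_6)\subseteq\bA^2_{x,y}$, and by $\pi^{-1}(t)\cap E$, which is the single reduced point $\sigma(t)$ and has a neighbourhood in $\pi^{-1}(t)$ equal to the fibre $\bA^1$ of the line bundle $\phi$. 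Consequently $\pi^{-1}(t)$ is proper (as $\pi$ is), reduced (it is smooth near $\sigma(t)$, and away from $\sigma(t)$ it equals $C_t^\circ$; the latter is integral because $x^3+\beta_4x+\beta_6$, having odd degree, is not a square in $\kappa(t)[x]$, so $y^2-x^3-\beta_4x-\beta_6$ is irreducible), and connected (the affine curve $C_t^\circ$ is not proper, so its closure in the proper curve $\pi^{-1}(t)$ cannot be $C_t^\circ$ itself and hence must contain $\sigma(t)$). Thus $\pi^{-1}(t)$ is obtained from $C_t^\circ$ by adjoining one smooth point, i.e., it is isomorphic to the plane cubic $V(Y^2Z-X^3-\beta_4XZ^2-\beta_6Z^3)\subseteq\bP^2$: a curve of arithmetic genus one, smooth when $4\beta_4^3+27\beta_6^2\neq0$ and an irreducible nodal cubic otherwise — here the invertibility of $3$ prevents $x^3+\beta_4x+\beta_6$ from having a triple root when $(\beta_4,\beta_6)\neq(0,0)$, so no cuspidal fibre occurs. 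Every geometric fibre of $\pi$ is therefore a genus one curve, which together with the section $\sigma$ proves the statement.

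\emph{Main obstacle.} The essential work is the analysis of the fibres. One has two different local pictures of $\pi^{-1}(t)$ — the affine Weierstrass curve away from $E$, coming from Lemma \ref{lemma_fibers_elliptic_curves_without_a_pt}, and the fibre of a line bundle near $E$ — and these must be patched together to conclude that the fibre is precisely the projective Weierstrass cubic, in particular that it is connected, reduced, and, in the degenerate case, acquires only a node rather than a worse singularity. Properness, flatness, and the existence of the section are comparatively formal, resting on the description of the weighted blow-up from Subsection \ref{subsection:weighted:blowup}.
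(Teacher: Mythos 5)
Your argument is correct and follows essentially the same route as the paper: properness from properness of the total space, the section as the inclusion of the exceptional divisor (Observation \ref{obs_blowup_has_a_section}), and the fibres obtained by patching the affine Weierstrass description of Lemma \ref{lemma_fibers_elliptic_curves_without_a_pt} away from $E$ with the line-bundle chart of Subsection \ref{subsection:weighted:blowup} near $E$; your explicit flatness check is a welcome addition that the paper leaves implicit. The one item the paper verifies that you never address is representability of $\pi$: since the base is the stack $\calP(4,6)$, "family of genus 1 curves" means in particular a representable morphism (this is what is needed later to induce $\psi\colon\calP(4,6)\to\overline{\calM}_{1,1}$), and your fibre analysis tacitly assumes the fibres are schemes, which is exactly this point. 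The gap is easily closed with the facts you already cite — $f$ is representable by Lemma \ref{lemma_fibers_elliptic_curves_without_a_pt} and the line bundle $\phi$ is representable by Remark \ref{remark quote pt 2}, and representability can be checked on an open cover of the source (injectivity of automorphism groups at geometric points) — but it should be stated. Two harmless imprecisions: near $\sigma(t)$ the fibre is an open subscheme of the $\bA^1$-fibre of $\phi$ (only the chart over $\bA^2_{x-1}$ is used), not all of $\bA^1$; and the final identification of $\pi^{-1}(t)$ with the projective Weierstrass cubic deserves a word (the boundary point is the unique smooth place at infinity of $C_t^\circ$), though for the statement it suffices that the fibre is a connected proper curve of arithmetic genus one, smooth along the section.
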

\begin{proof}
We first prove that $\pi$ is a family of genus 1 curves. Since $B_Z^{(4,6)}\calP(2,3,4)$ is proper, $\pi$ is proper. From Lemma \ref{lemma_fibers_elliptic_curves_without_a_pt} the map
$\pi$ is representable away from the exceptional divisor of $p:B_Z^{(4,6)}\calP(2,3,4) \to \calP(2,3,4)$, and from the results of Subsection \ref{subsection:weighted:blowup} it is representable around the exceptional divisor. Therefore $\pi$ is representable. From Subsection \ref{subsection:weighted:blowup}, the fibers of $\pi$ are smooth along the exceptional divisor. Moreover from Lemma \ref{lemma_fibers_elliptic_curves_without_a_pt}, they are genus 1 curves without a point when we restrict them to $\calP(2,3,4)\smallsetminus Z$. Therefore the fibers are genus 1 curves. 
The existence of the section follows from Observation \ref{obs_blowup_has_a_section} (the section is the inclusion of the exceptional divisor).
\end{proof}

Recall now that there is a well-known isomorphism $\calP(4,6)\cong \overline{\calM}_{1,1}$ over $\mathbb{Z}[\frac{1}{6}]$. Indeed, using the change of variables in \cite[Section III.1]{Silverman}, we can show that every stable curve in Weierstrass equation over $\mathbb{Z}[\frac{1}{6}]$ can be written as $y^2z = x^3 + Axz^2+Bz^3\subseteq \bP^2$. Then proceeding as in \cite[\href{https://stacks.math.columbia.edu/tag/072S}{Tag 072S}]{stacks-project}, we consider the schemes $W =\Spec( \mathbb{Z}[A,B,\frac{1}{6}])$ and $$E_W:=\{y^2z = x^3 + Axz^2+Bz^3\}\subseteq \mathbb{P}^2_W$$
and the action of $\bG_m$ on $W$ induced by the action of $\bG_m$ on $\mathbb{P}^2_{\mathbb{Z}}$ with weight 2 (resp. 3, 0) on the coordinate $x$ (resp. $y$, $z$). This induces an action of $\bG_m$ on $W$ with weight $4$ on $A$
 and $6$ on $B$. Proceeding as in \emph{loc. cit.} one can show the desired isomorphism $[W\smallsetminus \{0\}/\bG_m]\cong \overline{\calM}_{1,1}$.
 
 The following Theorem identifies $B_Z^{(4,6)}\calP(2,3,4)$ with the universal family of 1-pointed genus 1 curves over $\overline{\calM}_{1,1}$.
\begin{theorem}\label{theorem_iso_weighted_blowup_M12}
There are isomorphisms $B_Z^{(4,6)}\calP(2,3,4) \cong \overline{\calM}_{1,2}$ and $\calP(4,6) \cong \overline{\calM}_{1,1}$ which make the following diagram cartesian, where the morphism $\phi$ is the universal family:
$$\xymatrix{B_Z^{(4,6)}\calP(2,3,4) \ar[d]_{\pi}\ar[r] &\overline{\calM}_{1,2} \ar[d]^{\phi}\\ \calP(4,6) \ar[r] &  \overline{\calM}_{1,1}.}$$
\end{theorem}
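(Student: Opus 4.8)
The plan is to first produce the morphism $B_Z^{(4,6)}\calP(2,3,4) \to \overline{\calM}_{1,2}$, then check it is an isomorphism by exhibiting an inverse, or by a direct local analysis. By Proposition~\ref{prop_family_of_ellpitic} the map $\pi$ is a family of genus $1$ curves with a section, hence a family of $1$-pointed genus $1$ curves over $\calP(4,6)$. If we knew these were \emph{stable} $1$-pointed curves, the universal property of $\overline{\calM}_{1,2}$ (as the universal family over $\overline{\calM}_{1,1}$, i.e.\ $\overline{\calM}_{1,2}\cong\overline{\calM}_{1,1}\times_{\overline{\calM}_{1,1}}\overline{\calM}_{1,1}$ in the appropriate sense — concretely, $\overline{\calM}_{1,2}\to\overline{\calM}_{1,1}$ is the universal curve) would give the map: the composite $B_Z^{(4,6)}\calP(2,3,4)\xrightarrow{\pi}\calP(4,6)\cong\overline{\calM}_{1,1}$ classifies a stable genus $1$ curve, and the section then makes it a $2$-pointed stable curve, inducing $B_Z^{(4,6)}\calP(2,3,4)\to\overline{\calM}_{1,2}$. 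Chasing the definitions, this map sits over $\overline{\calM}_{1,1}$, so the square in the statement commutes; moreover it is compatible with $\pi$ and the universal family $\phi$, which is exactly the claim that the square is cartesian \emph{provided} the map is an isomorphism.

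So the real content is: (i) $\pi$ is a family of \emph{stable} $2$-pointed genus $1$ curves (not merely prestable), and (ii) the induced classifying map is an isomorphism. For (i), away from the exceptional divisor the fibers of $\pi$ are plane cubics $V(y^2z = x^3+\beta_4 xz^2 + \beta_6 z^3)$ with the two marked points $[0,1,0]$ (the flex at infinity) and $[0,0,1]$; over the locus in $\calP(4,6)$ where the cubic is nodal this is the stable pointed nodal cubic, and over the smooth locus it is a smooth $2$-pointed elliptic curve — in both cases stable, since the two marked points are distinct and lie in the smooth locus. Along the exceptional divisor $\calP(4,6)$ itself, Observation~\ref{obs_blowup_has_a_section} and Subsection~\ref{subsection:weighted:blowup} show the fiber is smooth and the section is disjoint from... well, here one must check the section meets the exceptional fibers in a smooth point and that the resulting $2$-pointed curve is exactly the degenerate "bubble" curve (a nodal union of the original cubic with a $\bP^1$ carrying the two points) — this is the geometric meaning of the weighted blow-up, and it is what makes the family stable rather than merely the family $f$. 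For (ii), I would argue both stacks are smooth proper Deligne--Mumford stacks over $R=\Z[\tfrac16]$ with the same coarse space (Massarenti's description, \cite{Mas14}, matches the coarse space of our blow-up with that of $\overline{\calM}_{1,2}$), reducing to checking the classifying map is an isomorphism on automorphism groups at each point and is a bijection on points; alternatively, construct the inverse $\overline{\calM}_{1,2}\to B_Z^{(4,6)}\calP(2,3,4)$ directly by sending a $2$-pointed stable genus $1$ curve $(C,p_1,p_2)$ to its Weierstrass data: blowing down or stabilizing $(C,p_1)$ gives a point of $\overline{\calM}_{1,1}\cong\calP(4,6)$, and the position of $p_2$ relative to the Weierstrass embedding of $(C,p_1)$ gives a point of the fiber of $\pi$, i.e.\ the recipe in equation~(1) and the change of variables following it, reversed.

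The main obstacle I anticipate is the behavior along the exceptional divisor: one must verify precisely that the weighted blow-up with weights $(4,6)$ — as opposed to any other weights or an ordinary blow-up — produces exactly the stable reduction of the degenerate pointed curves, i.e.\ that the exceptional $\calP(4,6)$ parametrizes the correct boundary stratum of $\overline{\calM}_{1,2}$ (the one where the genus is concentrated in a contracted component and the two points have collided onto a rational tail). This amounts to an explicit local computation near $Z$ using the chart $\Spec(R[x,y]_{x-1})$ of Lemma~\ref{lemma_local_desc_of_calP_around_z}: pull back the universal Weierstrass family, perform the weighted blow-up, and read off the fiber over a point of the exceptional divisor, checking it is a stable $2$-pointed curve of the expected combinatorial type and that this identifies the exceptional divisor with the boundary divisor $\delta_{0}\subset\overline{\calM}_{1,2}$ (or $\delta_{irr}$, depending on conventions) compatibly with the universal family. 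Granting that, smoothness and properness of both sides over $R$ plus agreement of coarse spaces, together with a check that the map is representable and unramified (so that it is étale, being a map of smooth DM stacks of the same dimension), upgrades the bijection on geometric points to the desired isomorphism.
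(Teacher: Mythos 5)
The central idea your proposal is missing is the one that makes the theorem almost formal: $\phi:\overline{\calM}_{1,2}\to\overline{\calM}_{1,1}$ \emph{is} the universal curve, and $\pi:B_Z^{(4,6)}\calP(2,3,4)\to\calP(4,6)$, with the exceptional divisor as its section (Observation \ref{obs_blowup_has_a_section}), is already a family of $1$-pointed stable genus $1$ curves (Proposition \ref{prop_family_of_ellpitic}): every fiber of $\pi$ is an irreducible Weierstrass cubic, smooth or nodal, marked at the point supplied by the exceptional divisor. This family therefore induces a classifying map $\psi:\calP(4,6)\to\overline{\calM}_{1,1}$ together with a cartesian square identifying the blow-up with $\calP(4,6)\times_{\overline{\calM}_{1,1}}\overline{\calM}_{1,2}$, so the whole theorem reduces to proving that $\psi$ is an isomorphism --- no $2$-pointed curves, no bubbling, no matching of $E$ with a boundary divisor is ever required. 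By instead trying to map the total space to $\overline{\calM}_{1,2}$ as the classifying map of a $2$-pointed stable family, you manufacture the ``main obstacle'' you describe (stability of the second section, the bubble curves along the exceptional locus, identifying $E$ with the boundary of $\overline{\calM}_{1,2}$) and then explicitly leave that verification undone (``granting that\dots''); note also that in the family $\pi$ the exceptional divisor is the marked point, not a locus of degenerate fibers, so the analysis you postpone is an artifact of the route you chose --- but on that route it is genuinely needed, and it is not done.

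The isomorphism step is likewise not established as written. You lean on Massarenti's identification of coarse spaces, which concerns the coarse moduli space over a field, whereas the theorem is over $\Z[\frac{1}{6}]$; and even over a field, ``same coarse space, bijective on points, isomorphism on automorphism groups, representable and unramified'' are exactly the checks carrying the content, none of which are verified --- least of all along the exceptional divisor, the locus you deferred. (The paper records the coarse-space-plus-stabilizers argument only as an alternative over a field, via the codimension-one stabilizer criterion of \cite{Ger17}.) The paper's actual proof shows directly that $\psi$ is an isomorphism over $\Z[\frac{1}{6}]$: bijectivity on geometric points via $j$-invariants, representability via Lemma \ref{lemma_representable_automorphisms_fibred_product}, isomorphism over $\overline{\Q}$ by Zariski's main theorem followed by fpqc descent to $\Q$, then miracle flatness and a Nakayama argument showing $\calO_{\overline{\calM}_{1,1}}\to\psi_*\calO_{\calP(4,6)}$ is an isomorphism. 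Some substitute for these verifications (or a genuine construction of your proposed inverse via Weierstrass data, which is only gestured at) is needed before your outline becomes a proof.
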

\begin{proof}
From Proposition \ref{prop_family_of_ellpitic}, $\pi$ is a family of genus 1 curves with a section, so it induces a morphism $\psi:\calP(4,6) \to \overline{\calM}_{1,1}$ and a \emph{cartesian} diagram like the one above. Therefore for proving the statement, it suffices to check that $\psi$ is an isomorphism.

We check that $\psi$ is bijective on geometric points and representable (therefore it is finite). 
 We can check that it is bijective on the level of coarse spaces, and since the source is proper and both coarse spaces are isomorphic to $\bP^1$, it suffices to check injectivity.
In other terms, it suffices to check that two distinct fibers of $\pi$ have different $j$-invariant. This follows from the explicit descriptions of the fibers of $\pi$ of Proposition \ref{prop_family_of_ellpitic}.
To check it is representable, we use Lemma \ref{lemma_representable_automorphisms_fibred_product}. It suffices to check that for every point $p\in \overline{\calM}_{1,1}$, there is a point $q \in B_Z^{(4,6)}\calP(2,3,4)$ which maps to it and which has no automorphisms: for every pair of $(\beta_4,\beta_6)$ it suffices to choose a point $(\alpha_2,\alpha_3,\alpha_4)$ with $\alpha_2\alpha_3 \neq 0$, $\alpha_4 = \beta_4$ and $\alpha_3^2 = \alpha_2^3 + \beta_4\alpha_2 +\beta_6$.

Observe now that $\psi$ is an isomorphism over $\Spec(\overline{\mathbb{Q}})$. Indeed the restriction $\psi_{\overline{\mathbb{Q}}} : \calP(4,6)_{\overline{\mathbb{Q}}} \to \overline{\calM}_{1,1, \overline{\mathbb{Q}}}$ is still bijective and representable. In particular, for every point $p\in \calP(4,6)(\Spec(\overline{\bQ}))$, the induced map $\operatorname{Aut}_p(\calP(4,6)_{\overline{\mathbb{Q}}})\to \operatorname{Aut}_{\psi_{\overline{\bQ}}(p)}(\overline{\calM}_{1,1,\overline{\mathbb{Q}}})$ is injective. But an injective map between finite groups of the same cardinality is an isomorphism,
so $\psi_{\overline{\bQ}}$ is an isomorphism on the groupoids of $\overline{\mathbb{Q}}$-points. Then from Zariski's main theorem (see \cite[Theorem A.5]{AI19}) it is an isomorphism. But $\Spec(\overline{\mathbb{Q}})\to \Spec(\bQ)$ is an fpqc morphism, so if $\psi_{\overline{\mathbb{Q}}}$ is an isomorphism also $\psi_{\bQ}$ was an isomorphism as being an isomorphism is local in the fpqc topology (see \cite[\href{https://stacks.math.columbia.edu/tag/02YJ}{Tag 02YJ}]{stacks-project}).

We observe now that $\psi_*\calO_{\calP(4,6)}$ is a line bundle. Indeed, $\psi$ is flat from "miracle flatness Theorem" (see  \cite[\href{https://stacks.math.columbia.edu/tag/00R3}{Tag 00R3}]{stacks-project}), as it is a map between two regular 2-dimensional stacks, with 0-dimensional fibers. Therefore $\psi_*\calO_{\calP(4,6)}$ is a vector bundle, since $\overline{\calM}_{1,1}$ is regular, and flat modules over a regular local ring are free. But since $\psi_\bQ$ is an isomorphism, $\psi_*\calO_{\calP(4,6)}$ has rank 1 over the generic point. Since $\overline{\calM}_{1,1}$ is connected, $\psi_*\calO_{\calP(4,6)}$ has rank 1.

Recall finally that $\psi$ is finite, so to check that it is an isomorphism it suffices to check that
$\psi^{\#} :\calO_{\overline{\calM}_{1,1}} \to \psi_*\calO_{\calP(4,6)}$ is an isomorphism. It suffices to check the statement \'etale locally on the target, where we have a morphism $A_p\to B_p$ of noetherian local rings, which sends $1\mapsto 1$ and such that $B_p$ has rank 1 as an $A_p$-module. But then if we denote by $m_A$ (resp. $m_B$) the maximal ideal of $A_p$ (resp. $B_p$), then $A_p/m_A\to B_p/m_B$ is surjective. So from Nakayama, $A_p \to B_p$ is surjective. It is injective since both rings are integral and $\psi$ is an isomorphism on the generic fiber, so it is an isomorphism.\end{proof}

\begin{lemma}\label{lemma_representable_automorphisms_fibred_product}
Let $f:\calX\to \calY$ and $g:\calZ \to \calY$ be morphisms of algebraic stacks, and let $F:= \calX\times_{\calY}\calZ$ be its fibered product. Assume that for every geometric point $p\in \calX$, there is a point $q \in F$ which maps to $p$ and such that $\operatorname{Aut}_F(q) = \{\operatorname{Id}\}$. Then $f$ is representable.
\end{lemma}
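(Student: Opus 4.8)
The plan is to reduce representability of $f$ to a statement about automorphism groups, using the standard criterion that a morphism of algebraic stacks is representable if and only if for every geometric point $p$ of $\calX$, the induced homomorphism $\operatorname{Aut}_{\calX}(p)\to \operatorname{Aut}_{\calY}(f(p))$ is injective (see e.g.\ the stacks project, or \cite{AI19}). So it suffices to fix a geometric point $p\in \calX$ and show that $\operatorname{Aut}_{\calX}(p)\to \operatorname{Aut}_{\calY}(f(p))$ is a monomorphism of groups. Write $\calZ\to\calY$ for $g$, and by hypothesis choose $q\in F = \calX\times_{\calY}\calZ$ lying over $p$ with $\operatorname{Aut}_F(q)=\{\operatorname{Id}\}$.

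First I would unwind what $F$ is as a $2$-fibered product: a point of $F$ over a base $T$ is a triple $(x,z,\sigma)$ with $x\in\calX(T)$, $z\in\calZ(T)$, and $\sigma$ an isomorphism $f(x)\xrightarrow{\sim} g(z)$ in $\calY(T)$; an automorphism of such a triple is a pair $(a,b)$ with $a\in\operatorname{Aut}_{\calX}(x)$, $b\in\operatorname{Aut}_{\calZ}(z)$ such that the square relating $\sigma$, $f(a)$, $g(b)$ commutes, i.e.\ $g(b)\circ\sigma = \sigma\circ f(a)$. Now specialize to the chosen geometric point $q = (p, z_0, \sigma_0)$. The key observation is that the assignment $a\mapsto (a, b)$, where $b$ is forced by $g(b) = \sigma_0\circ f(a)\circ\sigma_0^{-1}$, is well-defined precisely on the subgroup of $\operatorname{Aut}_{\calX}(p)$ consisting of those $a$ for which $\sigma_0\circ f(a)\circ\sigma_0^{-1}$ lies in the image of $\operatorname{Aut}_{\calZ}(z_0)\to\operatorname{Aut}_{\calY}(g(z_0))$; but in fact I only need the easy direction. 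If $a\in\operatorname{Aut}_{\calX}(p)$ lies in the kernel of $\operatorname{Aut}_{\calX}(p)\to\operatorname{Aut}_{\calY}(f(p))$, then $f(a)=\operatorname{Id}$, so the pair $(a,\operatorname{Id}_{z_0})$ satisfies the compatibility $g(\operatorname{Id})\circ\sigma_0 = \sigma_0 = \sigma_0\circ f(a)$ and hence defines an element of $\operatorname{Aut}_F(q)$. Since $\operatorname{Aut}_F(q)=\{\operatorname{Id}\}$, this forces $(a,\operatorname{Id}) = \operatorname{Id}$, in particular $a = \operatorname{Id}_p$. Thus $\operatorname{Aut}_{\calX}(p)\to\operatorname{Aut}_{\calY}(f(p))$ has trivial kernel, i.e.\ is injective.

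Finally I would assemble these pointwise statements: since $p\in\calX$ was an arbitrary geometric point and $\operatorname{Aut}_{\calX}(p)\to\operatorname{Aut}_{\calY}(f(p))$ is injective for each, the criterion for representability applies and $f$ is representable. I expect the only delicate point to be bookkeeping the $2$-categorical data of the fibered product carefully enough to see that $(a,\operatorname{Id}_{z_0})$ really is an automorphism of the triple $q$ — i.e.\ checking the relevant $2$-commutativity — but this is exactly where the hypothesis $f(a) = \operatorname{Id}$ is used, and no calculation beyond that is needed. One can also phrase the whole argument via the exact sequence of automorphism groups associated to a fibered square, $1\to\operatorname{Aut}_F(q)\to\operatorname{Aut}_{\calX}(p)\times\operatorname{Aut}_{\calZ}(z_0)\to\operatorname{Aut}_{\calY}(f(p))$, which makes the conclusion immediate: the kernel of $\operatorname{Aut}_{\calX}(p)\to\operatorname{Aut}_{\calY}(f(p))$ injects into $\operatorname{Aut}_F(q)=\{\operatorname{Id}\}$.
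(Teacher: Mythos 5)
Your proposal is correct and follows essentially the same route as the paper: both unwind automorphisms of a point $(a,b,\sigma)$ of the fibered product as compatible pairs, observe that any $\mu\in\operatorname{Aut}_{\calX}(p)$ with $f(\mu)=\operatorname{Id}$ yields the automorphism $(\mu,\operatorname{Id})$ of $q$, deduce injectivity of $\operatorname{Aut}_{\calX}(p)\to\operatorname{Aut}_{\calY}(f(p))$ from $\operatorname{Aut}_F(q)=\{\operatorname{Id}\}$, and conclude representability from the standard criterion (the paper cites \cite[Lemma 2.3.9]{AH} for this last step).
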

\begin{proof}
Recall that there is an equivalence of groupoids 
$$|F(\Spec(k))| \cong \{(a,b,\sigma): a \in \calX(\Spec(k)), b\in \calZ(\Spec(k)), \sigma \in \operatorname{Hom}_{\calY}(f(a),g(b)) \}.$$
 The automorphisms of a point $(a,b,\sigma)$ are pairs $(\mu,\nu)\in \operatorname{Aut}_{\calX}(a) \times \operatorname{Aut}_{\calZ}(b)$ which make this diagram commutative:
$$\xymatrix{ f(a) \ar[d]_{f(\mu)} \ar[r]^{\sigma} & g(b)\ar[d]^{g(\nu)} \\
f(a) \ar[r]^{\sigma} & g(b)}$$
In particular, for every $\mu$ such that $f(\mu) = \operatorname{Id}$, the pair $(\mu,\operatorname{Id})$ is an automorphism. Therefore if $\operatorname{Aut}_F(a,b,\sigma) = \operatorname{Id}$, the map $f$ induces an injection $\operatorname{Aut}_{\calX}(a) \to \operatorname{Aut}_{\calY}(f(a))$.

Now, our assumptions guarantee that for every geometric point $p\in \calX$ there is a point $q \in F$ which maps to $p$ and which has no automorphisms, so for every geometric point $p \in \calX$ the map $\operatorname{Aut}_{\calX}(p) \to \operatorname{Aut}_{\calY}(f(p))$ is injective. The result follows from \cite[Lemma 2.3.9]{AH}.\end{proof}
\begin{observation}
Over a field, one can also prove that $B_Z^{(4,6)}\calP(2,3,4) \cong \overline{\calM}_{1,2}$ as follows. First, observe that both spaces have the same coarse moduli space (proved in \cite[Theorem 2.3]{Mas14}, observe that the coarse moduli spaces of $\calP(2,3,4)$ and $\calP(1,2,3)$ agree). Then it suffice to study the automorphisms groups of the points of codimension one. Indeed since $\overline{\calM}_{1,2}$ is smooth and generically a scheme, the stabilizers on the points of codimension one are enough to characterize the stack structure of $\overline{\calM}_{1,2}$ (see  \cite[Theorem 1]{Ger17}). 
\end{observation}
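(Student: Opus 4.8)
The plan is to turn the sketch above into a proof by recognizing both $B_Z^{(4,6)}\calP(2,3,4)$ and $\overline{\calM}_{1,2}$ as the same stack sitting over their common coarse space, without constructing any map between them. Over a field $k$ with $6$ invertible both are smooth, proper, separated Deligne--Mumford stacks, generically schemes, with finite abelian stabilizers: a stabilizer in $\overline{\calM}_{1,2}$ is the group of automorphisms of a stable $2$-pointed genus one curve fixing the two markings, hence cyclic or a product of two cyclic groups, while a stabilizer in $\calP(2,3,4)$ or in its weighted blow-up is a subgroup of $\bG_m$. For stacks of this kind Chevalley--Shephard--Todd forces every stabilizer to act on a smooth chart as a group generated by pseudo-reflections, and the stacky structure in codimension $\geq 2$ is the one canonically attached to the singularities of the coarse space once the codimension-one root divisors are fixed; this is the content of the rigidity statement \cite[Theorem 1]{Ger17}, namely that such a stack is determined, up to isomorphism over its coarse space, by the coarse space together with its codimension-one stacky locus and the stabilizers there. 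Thus it suffices to prove (a) that the two coarse spaces agree, and (b) that the codimension-one stacky data match.

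For (a): the centre $Z$ of the blow-up, which over a field is the point $[1,1,0]\in\calP(2,3,4)$, has trivial stabilizer, since $\lambda\cdot(1,1,0)=(\lambda^{2},\lambda^{3},0)$ equals $(1,1,0)$ only for $\lambda=1$. Hence, in a Zariski neighbourhood of $Z$, the blow-up $B_Z^{(4,6)}\calP(2,3,4)\to\calP(2,3,4)$ is the weighted blow-up at a smooth point of a surface studied in Subsection~\ref{subsection:weighted:blowup}, and forming the coarse space commutes with it: away from the exceptional locus it is an isomorphism, and over $Z$ the coarse space of $[\bA^{3}/\bG_m]$ with weights $(4,6,-1)$, restricted to the relevant open, is the ordinary $(4,6)$-weighted blow-up, as one reads off from the ring of invariants computed in Subsection~\ref{subsection:weighted:blowup}. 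Since the coarse space of $\calP(2,3,4)$ is $P(2,3,4)\cong P(1,2,3)$ and $Z$ maps to the point that Massarenti blows up, the coarse space of $B_Z^{(4,6)}\calP(2,3,4)$ is exactly the weighted blow-up of $P(1,2,3)$ appearing in \cite[Theorem 2.3]{Mas14}, i.e. the coarse space of $\overline{\calM}_{1,2}$. One must still match the centre and the weights of this blow-up with those in \emph{loc. cit.}

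For (b): on $\calP(2,3,4)$ the only codimension-one locus with nontrivial stabilizer is $\{\alpha_{3}=0\}\cong\calP(2,4)$, whose generic stabilizer is $\bmu_{2}$; the loci $\{\alpha_{2}=0\}\cong\calP(3,4)$ and $\{\alpha_{4}=0\}\cong\calP(2,3)$ have trivial generic stabilizer, and the torus-fixed points are of codimension two. As $Z\notin\{\alpha_{3}=0\}$, the codimension-one stacky locus of $B_Z^{(4,6)}\calP(2,3,4)$ consists of the strict transform of $\{\alpha_{3}=0\}$ and the exceptional divisor $\calP(4,6)$, each with generic stabilizer $\bmu_{2}$. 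On the $\overline{\calM}_{1,2}$ side one checks that exactly two codimension-one loci carry a nontrivial, necessarily $\bmu_{2}$, generic automorphism group: the boundary divisor $\delta_{0}$ whose generic point is a smooth elliptic curve with a rational tail carrying both markings, with $\bmu_{2}$ generated by the inversion on the elliptic component; and the closure of the locus where the second marked point is a nontrivial $2$-torsion point, with $\bmu_{2}$ the elliptic involution. Since root-stacking along distinct divisors is symmetric, it is enough to see that these two $\bmu_{2}$-divisors map, under the isomorphism of coarse spaces from \cite[Theorem 2.3]{Mas14}, to the unordered pair formed by the exceptional divisor and the strict transform of $\{\alpha_{3}=0\}$; then both stacks are the common $\bmu_{2}$-root stack of the coarse space along that pair, hence isomorphic.

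I expect the main obstacle to be this last compatibility in (b): one has to trace through the explicit birational description in \cite{Mas14} to identify which divisor of the coarse space is the image of the boundary divisor $\delta_{0}$ — it should be the exceptional divisor of the weighted blow-up, after which the remaining pairing is automatic — rather than some other divisor. A secondary but more routine point is the verification in (a) that coarse-space formation commutes with the weighted blow-up at $Z$, which is exactly where the triviality of the stabilizer at $Z$ enters, since at a stacky centre these two operations need not commute.
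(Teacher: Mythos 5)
Your proposal is correct and takes essentially the same route as the paper's observation: identify the coarse spaces via \cite[Theorem 2.3]{Mas14} and then invoke \cite[Theorem 1]{Ger17} to reduce everything to matching the codimension-one stacky divisors, which you correctly pin down as the exceptional divisor and the strict transform of $\{\alpha_3=0\}$ on one side, and $\Delta_{0,\{1,2\}}$ and the $2$-torsion locus on the other, each with generic stabilizer $\bmu_2$. The only blemish is the aside that Chevalley--Shephard--Todd forces every stabilizer to be generated by pseudo-reflections (false, e.g.\ at the $\bmu_4$-point of $\calP(2,3,4)$), but this is immaterial since the rigidity you actually use is exactly \cite[Theorem 1]{Ger17}, as in the paper.
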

\section{The Brauer group of $\overline{\calM}_{1,2}$}
Let $S$ be a scheme where $6$ is invertible. The goal of this s ection is to prove the following:
\begin{theorem}\label{teo_brauer}
The pull-back map $\operatorname{Br}'(S)\to \operatorname{Br}'(\overline{\calM}_{1,2;S})$ is an isomorphism.\end{theorem}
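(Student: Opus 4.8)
The plan is to reduce the computation to known cases via the explicit description $\overline{\calM}_{1,2;S}\cong B_Z^{(4,6)}\calP(2,3,4)_S$ from Theorem \ref{theorem_iso_weighted_blowup_M12}, combined with Shin's computation of the Brauer group of weighted projective stacks and Antieau--Meier's computation for $\calM_{1,1;S}$. First I would observe that $\calP(4,6)_S\cong\overline{\calM}_{1,1;S}$, so by Antieau--Meier the map $\operatorname{Br}'(S)\to\operatorname{Br}'(\calP(4,6)_S)$ is an isomorphism (at least when $6$ is invertible on $S$); alternatively one can cite Shin directly. Next, I would use that $\pi:B_Z^{(4,6)}\calP(2,3,4)_S\to\calP(4,6)_S$ is, by Proposition \ref{prop_family_of_ellpitic}, a family of genus $1$ curves with a section, hence a genus-one fibration with a rational point; the key geometric input is that $\pi$ is proper, flat, with geometrically connected fibers and $\pi_*\calO=\calO_{\calP(4,6)_S}$. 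The strategy is then to show that pullback along $\pi$ induces an isomorphism on Brauer groups, which reduces the theorem to the already-settled statement for $\calP(4,6)_S\cong\overline{\calM}_{1,1;S}$.

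For the step that $\pi^*:\operatorname{Br}'(\calP(4,6)_S)\to\operatorname{Br}'(B_Z^{(4,6)}\calP(2,3,4)_S)$ is an isomorphism, I would argue as follows. Injectivity follows immediately from the existence of the section $\iota$ (the exceptional divisor), since $\iota^*\pi^*=\mathrm{id}$. For surjectivity, the idea is to exploit that the fibers of $\pi$ are genus-one curves with a rational point: a Brauer class on the total space restricts trivially to the section, and along each fiber, a genus one curve with a section is an elliptic curve whose Brauer group (relative to the base, via the Leray spectral sequence) is controlled by the relative Picard/torsion. Concretely, I would pass to a smooth $3$-fold presentation: cover $\calP(4,6)_S$ by a smooth affine scheme and use that the relative Picard sheaf $R^1\pi_*\Gm$ of a genus-one fibration with section is generated by the section and the components of the singular fibers, so the transgression/obstruction terms in the relative Leray sequence for $\Gm$ either vanish or are killed after pulling back along the section. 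The upshot is that every class in $\operatorname{Br}'(B_Z^{(4,6)}\calP(2,3,4)_S)$ differs from $\pi^*(\iota^*$ of that class$)$ by a class supported on the boundary, and a local analysis near the exceptional divisor (where $\pi$ is smooth, by Subsection \ref{subsection:weighted:blowup}) together with purity shows this difference is zero.

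An alternative, and I suspect this is the route the author has in mind given the acknowledgement to Shin, is to bypass the fibration argument entirely: use Grothendieck's existence theorem and Artin approximation to reduce to the case where $S$ is the spectrum of a complete local ring, then further to an algebraically closed field or a finite/henselian local base, where $\operatorname{Br}'$ is computable. In that approach one would first establish the result for $S=\Spec k$ a field — here $\overline{\calM}_{1,2}$ is a smooth proper rational surface-type DM stack, and one computes $\operatorname{Br}'$ directly from the $\mathbb{G}_m$-cohomology of the weighted blow-up using the Gysin sequence relating $\operatorname{Br}'(B_Z^{(4,6)}\calP(2,3,4))$, $\operatorname{Br}'(\calP(2,3,4))$ and $\operatorname{Br}'$ of the exceptional $\calP(4,6)$, all of which are known — and then spread out: the functor $S\mapsto\operatorname{Br}'(\overline{\calM}_{1,2;S})/\operatorname{Br}'(S)$ is compatible with limits, so Artin approximation lets one deduce vanishing over an arbitrary base from vanishing over strictly henselian local rings, which in turn follows from the field case by a deformation/rigidity argument for torsion cohomology classes.

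The main obstacle I expect is controlling the contribution of the boundary divisor (equivalently, the exceptional divisor of the weighted blow-up) to the Brauer group — ensuring that no Brauer class is hidden there. In the fibration approach this is the surjectivity step and requires genuinely understanding $R^2\pi_*\Gm$ for a genus-one fibration with section over a (non-reduced, stacky) base; in the Artin-approximation approach the delicate point is the base-change/spreading-out bookkeeping for non-torsion versus torsion parts of $\operatorname{Br}'$ and making sure the weighted-blow-up presentation behaves well under the relevant faithfully flat descent. Either way, the weighted-blow-up description is exactly what makes the boundary tractable, since near the exceptional locus $\pi$ is a smooth representable morphism with weighted-projective-space fibers whose cohomology is explicit.
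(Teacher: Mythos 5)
The central step of your main route --- surjectivity of $\pi^*:\operatorname{Br}'(\calP(4,6)_S)\to\operatorname{Br}'(\overline{\calM}_{1,2;S})$ for the genus-one fibration $\pi$ --- has a genuine gap. The assertion you lean on, that ``the relative Picard sheaf $R^1\pi_*\Gm$ of a genus-one fibration with section is generated by the section and the components of the singular fibers,'' is false: $R^1\pi_*\Gm$ contains the degree-zero part of the relative Picard functor (the relative Jacobian, essentially the fibration itself), and neither $\oH^1(\text{base},R^1\pi_*\Gm)$ nor $\oH^0(\text{base},R^2\pi_*\Gm)$ is formally controlled by the existence of a section. Indeed the statement ``genus-one fibration with section $\Rightarrow$ $\operatorname{Br}'$ of base $=\operatorname{Br}'$ of total space'' is simply wrong in general (an elliptic K3 surface with section over $\C$ has enormous Brauer group while $\operatorname{Br}(\bP^1)=0$), so any proof along these lines must use the specific geometry of $\overline{\calM}_{1,2}$, which your sketch does not supply. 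This is exactly why the paper does \emph{not} fiber over $\calP(4,6)$: it runs the Leray/Grothendieck spectral sequence for the blow-down $f:\overline{\calM}_{1,2;S}\to\calP(2,3,4)_S$ instead, where the relevant sheaves are genuinely tractable --- $R^1f_*\Gm\cong\iota_*\Z$ is supported on the blown-up center $Z$ (so $\oH^1(\calP,R^1f_*\Gm)=\oH^1(Z,\Z)$ is torsion free, and surjectivity onto $\oH^0$ comes from the ideal sheaf of the exceptional divisor) --- and then invokes Shin's theorem for $\calP(2,3,4)_S$ over the arbitrary base.

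Your alternative sketch correctly guesses that Grothendieck existence and Artin approximation enter (as the acknowledgements suggest), but misplaces where: in the paper they are used \emph{locally}, to prove that the stalks of $R^2f_*\Gm$ at strictly henselian local rings of $\calP(2,3,4)$ are torsion free. Concretely, a torsion class gives a $\Gm$-gerbe over $\calB=\calM\times_\calP\Spec(A)$; one trivializes it over the formal completion by lifting $1$-twisted line bundles step by step (the obstruction lives in an $\oH^2$ that vanishes because, via the good moduli space, one reduces to a relative curve over the affine center), starts the induction using Shin's theorem for the exceptional $\calP(4,6)_\calZ$, and then algebraizes the formal section using Tannaka duality, coherent completeness, and \cite[Theorem 3.4]{AHR19}. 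By contrast, your proposed global reduction --- ``the functor $S\mapsto\operatorname{Br}'(\overline{\calM}_{1,2;S})/\operatorname{Br}'(S)$ is compatible with limits, so vanishing over strictly henselian local rings gives vanishing over an arbitrary base'' --- is not a valid deduction as stated: the cohomological Brauer group does not localize this way without controlling the full Leray spectral sequence over $S$, which is essentially the work the paper does, only relative to $\calP(2,3,4)_S$ rather than to $S$.
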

The proof is divided into several steps, but the key ingredient is Minseon Shin's result on the cohomological Brauer group of weighed projective stacks:
\begin{theorem}[\cite{Minseon}]Let $\calP_S$ be a weighted projective stack over $S$. Then the pull-back map
$ \operatorname{Br}'(S)\to \operatorname{Br}'(\calP_S)$ is an isomorphism.
\end{theorem}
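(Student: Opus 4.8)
The plan is to analyse the structure morphism $f\colon \calP_S\to S$ via the Leray spectral sequence for the sheaf $\Gm$, reducing everything to the behaviour of $f$ on geometric fibers. The first observation is that $f$ admits a section: the image of $(1,0,\dots,0)\in \bA^{n+1}_S\smallsetminus\{0\}$ under $\bA^{n+1}_S\smallsetminus\{0\}\to \calP_S$ defines a morphism $\sigma\colon S\to \calP_S$ with $f\circ\sigma=\operatorname{id}_S$. Consequently $\sigma^*\circ f^*=\operatorname{id}$ on $\operatorname{Br}'(S)$, so $f^*$ is split injective, and the entire content of the statement is the surjectivity of $f^*$, equivalently the vanishing of $\ker\bigl(\sigma^*\colon \operatorname{Br}'(\calP_S)\to \operatorname{Br}'(S)\bigr)$.

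For the Leray spectral sequence $E_2^{p,q}=H^p(S,R^qf_*\Gm)\Rightarrow H^{p+q}(\calP_S,\Gm)$ I would establish that $f_*\Gm=\Gm$, that $R^1f_*\Gm=\underline{\Z}$ generated by the tautological line bundle $\calO(1)$, and crucially that $R^2f_*\Gm=0$. Since $\calO(1)$ is defined globally on $\calP_S$, the edge map $H^1(\calP_S,\Gm)\to H^0(S,R^1f_*\Gm)=\Z$ is surjective, which forces the differential $d_2\colon E_2^{0,1}\to E_2^{2,0}$ to vanish. Granting in addition $H^1_{\mathrm{et}}(S,\Z)=0$ (true for $S$ connected and normal, to which one reduces), the only graded piece of $H^2(\calP_S,\Gm)$ that survives is $E_\infty^{2,0}=H^2(S,\Gm)$, and together with the section this yields $H^2(\calP_S,\Gm)\cong H^2(S,\Gm)$; passing to torsion gives the claim for $\operatorname{Br}'$.

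The heart of the argument is the computation of these higher direct images on geometric fibers. By smooth proper base change with torsion coefficients $\mu_n$ (for $n$ invertible on $S$) the stalks of $R^qf_*\mu_n$ are the groups $H^q(\calP_{\bar s},\mu_n)$, for $\calP$ a weighted projective stack over an algebraically closed field, and one recovers the $\Gm$-statements from these via the Kummer sequence. To compute $H^*(\calP_{\bar s},\mu_n)$ I would use the $\Gm$-torsor $\pi\colon U=\bA^{n+1}\smallsetminus\{0\}\to \calP_{\bar s}$. Since $U$ is the complement of a closed subset of codimension $n+1\ge 2$ in $\bA^{n+1}$, absolute purity and homotopy invariance give $H^i(U,\mu_n)=0$ for $0<i\le 2n$, in particular in degrees $1$ and $2$. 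Feeding this into the Gysin (Wang) sequence of the $\Gm$-torsor, whose connecting map is cup product with $c_1(\calO(1))$, yields $H^1(\calP_{\bar s},\mu_n)=0$ and $H^2(\calP_{\bar s},\mu_n)=\Z/n\cdot c_1(\calO(1))$. Together with the standard fact $\operatorname{Pic}(\calP_{\bar s})=\Z\langle\calO(1)\rangle$, this shows $\operatorname{Pic}(\calP_{\bar s})/n\to H^2(\calP_{\bar s},\mu_n)$ is onto, hence $\operatorname{Br}(\calP_{\bar s})[n]=0$; letting $n$ vary gives the vanishing of the prime-to-$p$ fiberwise Brauer group, while the Picard computation gives $R^1f_*\Gm=\underline{\Z}$.

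The main obstacle is this fiberwise vanishing in full and its propagation over an \emph{arbitrary} base. Two points require care. First, in characteristic $p$ the $p$-primary torsion of $\operatorname{Br}'$ is not reached by smooth proper base change (which fails for $\mu_p$); handling it needs either flat (fppf) cohomology of $\mu_{p^k}$ or a separate deformation argument, and here one can use that $\calP_S\to S$ is smooth and proper with a section, together with formal GAGA / Grothendieck existence to lift and compare classes, exactly as in the hint for Theorem~\ref{teo_brauer}. Second, one must upgrade the fiberwise computations to the genuine equality $R^2f_*\Gm=0$ as a sheaf on $S$ (not merely after $\otimes\,\mu_n$), which I would do by checking stalks on strict henselizations and reducing to their closed points by an Artin approximation and limit argument. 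Once these are in place, the spectral-sequence bookkeeping of the second paragraph is routine and gives the asserted isomorphism $\operatorname{Br}'(S)\xrightarrow{\ \sim\ }\operatorname{Br}'(\calP_S)$.
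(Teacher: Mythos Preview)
The paper does not prove this theorem at all: it is quoted from \cite{Minseon} as a black-box input to the computation of $\operatorname{Br}'(\overline{\calM}_{1,2;S})$, with no argument given. There is therefore no ``paper's own proof'' to compare your proposal against.

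That said, your outline is essentially the standard Leray--spectral-sequence approach one would expect Shin's paper to follow, and the broad strokes are sound: the section gives split injectivity, $R^1f_*\Gm$ is constant $\Z$ generated by $\calO(1)$, the edge map is surjective because $\calO(1)$ is global, and the crux is controlling $R^2f_*\Gm$ fiberwise. Two small corrections. First, you cannot in general ``reduce to $S$ normal'': the statement is for an arbitrary base, and normalization can change the Brauer group. What you actually need is only that $\oH^1_{\mathrm{et}}(S,\Z)$ has no torsion, and this is true for \emph{any} scheme (see the reference \cite[Corollary 7.9.1]{Wei91} used in Step~4 of the paper), so the spectral-sequence bookkeeping goes through without the normality hypothesis. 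Second, as you yourself flag, the $p$-primary part in characteristic $p$ is the genuinely delicate point, and your sketch there (fppf $\mu_{p^k}$, formal GAGA, Artin approximation) is a plausible roadmap but not yet a proof; this is precisely where the work in \cite{Minseon} lies.
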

Up to working a connected component at the time, we can assume that $S$ is connected. 
\begin{notation}We will adopt the following notations:
\begin{enumerate}
    \item $\calM:=\overline{\calM}_{1,2;S}$;
    \item $\calP:=\calP(2,3,4)_S$;
    \item $E:= \calP(4,6)_S$, and
    \item $f:\calM\to \calP$ the blow-down.
\end{enumerate}\end{notation}

Before proceeding with the proof of Theorem \ref{teo_brauer}, it is useful to recall the following \begin{observation}\label{oss_trivial_gerbe_is_1_twisted}
A $\bG_m$-gerbe $\sG \to \sY$
over a stack $\sY$ is trivial if and only if it has representable morphism $\pi:\sG\to \sY\times B\bG_m $ over $\sY$. This in turn is equivalent to the data of a line bundle $\calL$ on $\sG$ with the following property. For every point $p\in \sG$, if we denote by $\sK_p$ be the kernel of $\operatorname{Aut}_p(\sG) \xrightarrow{\pi} \operatorname{Aut}_p(\sY)$, then we require $\sK_p$ to act faithfully on $\calL_p$. Such a line bundle is called $1$-twisted or $(-1)$-twisted line bundle (see \cite[Lemma 3.1.1.8]{Lieb08}).
\end{observation}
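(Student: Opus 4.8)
The statement bundles together two equivalences, and the plan is to treat each in turn, the first being essentially formal and the second carrying the actual content.

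First I would record the dictionary: a morphism $\sG\to\sY\times B\bG_m$ over $\sY$ is the same datum as a morphism $\sG\to B\bG_m$, i.e.\ a line bundle $\calL$ on $\sG$. Under this dictionary $\pi$ is representable exactly when it is injective on automorphism groups of geometric points. For a geometric point $p\in\sG$ the $\bG_m$-gerbe structure gives an exact sequence $1\to\sK_p\to\operatorname{Aut}_p(\sG)\to\operatorname{Aut}_p(\sY)\to 1$, while the composite $\sG\to\sY\times B\bG_m\to B\bG_m$ records the character through which $\operatorname{Aut}_p(\sG)$ acts on $\calL_p$; hence $\pi$ is injective on $\operatorname{Aut}_p(\sG)$ if and only if $\sK_p$ acts faithfully on $\calL_p$. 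This is exactly the second ``if and only if'' in the statement. Finally, since a faithful action of $\sK_p\cong\bG_m$ on a line has weight $\pm1$, after replacing $\calL$ by $\calL^{\vee}$ on the connected components where the weight is $-1$ we may take the weight to be $+1$ throughout: this is the ``$1$-twisted'' normalization.

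Next I would prove the first equivalence, namely that $\sG\to\sY$ is trivial if and only if a representable $\pi$ as above exists. The implication ``trivial $\Rightarrow$ such $\pi$ exists'' is immediate, since an isomorphism $\sG\cong\sY\times B\bG_m$ over $\sY$ is in particular a representable morphism (and pulling back the universal line bundle on $B\bG_m$ produces a $1$-twisted line bundle). For the converse, suppose $\pi\colon\sG\to\sY\times B\bG_m$ is representable over $\sY$. Both $\sG\to\sY$ and $\sY\times B\bG_m\to\sY$ are $\bG_m$-gerbes, and representability of $\pi$ forces the induced map of bands $\sK_p\cong\bG_m\to\bG_m$ to be an injective endomorphism of $\bG_m$, hence of the form $t\mapsto t^{\pm1}$, hence an isomorphism. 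It then remains to see that a morphism of $\bG_m$-gerbes over $\sY$ which is an isomorphism on bands is an isomorphism of stacks: one can check this after an fppf cover $\sY'\to\sY$ trivializing both gerbes, where $\pi$ becomes the self-map of $\sY'\times B\bG_m$ over $\sY'$ classified by a line bundle of the shape $\mathrm{pr}_1^*M\otimes\mathrm{pr}_2^*\calO(\pm1)$ (using $\Pic(\sY'\times B\bG_m)\cong\Pic(\sY')\times\Z$, with representability pinning the second factor to $\pm1$), which is an explicit isomorphism; alternatively one invokes \cite[Lemma 3.1.1.8]{Lieb08} directly. Hence $\sG\cong\sY\times B\bG_m$ over $\sY$, i.e.\ the gerbe is trivial.

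The only genuinely non-formal input is this last step — that a representable morphism of $\bG_m$-gerbes over $\sY$ inducing an isomorphism on bands is an isomorphism of stacks — and I expect the cleanest route is the fppf-local trivialization just described, together with the Picard group computation for a trivial $\bG_m$-gerbe; all the remaining steps are unwindings of the definitions of ``representable morphism'' and ``$\bG_m$-gerbe''.
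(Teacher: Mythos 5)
Your sketch is correct, but note that the paper does not actually prove this statement: it is recorded as an Observation and the content is delegated entirely to the citation \cite[Lemma 3.1.1.8]{Lieb08}. What you have written is essentially a reconstruction of that standard argument, and the pieces fit together: the dictionary between morphisms $\sG\to\sY\times B\bG_m$ over $\sY$ and line bundles on $\sG$, the identification of representability with injectivity on automorphism groups of geometric points (so that the kernel computation gives exactly "$\sK_p$ acts faithfully on $\calL_p$", i.e.\ weight $\pm1$), and, for the converse of the first equivalence, the observation that representability forces the induced endomorphism of the band $\bG_m$ to be $t\mapsto t^{\pm1}$, after which a banded morphism of $\bG_m$-gerbes inducing an isomorphism of bands is an isomorphism, checked fppf-locally after trivializing both gerbes using $\Pic(\sY'\times B\bG_m)\cong\Pic(\sY')\times\Z$ on connected components. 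The two standard facts you lean on are the same ones the paper itself uses elsewhere (representability via injectivity on automorphisms of geometric points, as in its Lemma \ref{lemma_representable_automorphisms_fibred_product} quoting \cite[Lemma 2.3.9]{AH}, and the fact that being an isomorphism is fppf/fpqc-local on the target), so your argument is consistent with the paper's toolkit. The trade-off is simply self-containedness versus brevity: the paper buys a one-line statement by outsourcing the proof to Lieblich, while your route makes the band argument and the local triviality of gerbes explicit; the only cosmetic remark is that the final normalization to weight $+1$ is not needed, since the statement explicitly allows either $1$-twisted or $(-1)$-twisted line bundles.
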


\begin{bf}Step 1:\end{bf} It suffices to prove that $\oH^1(\calP, R^1f_*\bG_m)^{tor} = \oH^0(\calP, R^2f_*\bG_m)^{tor}=0$ and that the morphism $\oH^1(\calM, \bG_m)\to \oH^0(\calP, R^1f_*\bG_m)$ is surjective.

Observe that we can compute $R\Gamma(\calM,\cdot)$ by composing the functors $Rf_*$ with $R\Gamma(\calP,\cdot)$, where $R\Gamma$ is the derived functor of global sections. Then we can consider the Grothendieck spectral sequence associated with this composition of functors, and its seven term exact sequence:

\begin{align*}
\oH^1(\calP, \bG_m) \to &\oH^1(\calM, \bG_m) \xrightarrow{a} \oH^0(\calP, Rf_*^1\bG_m) \to \\\to \oH^2(\calP, \bG_m)\to &\operatorname{Ker}(\oH^2(\calM, \bG_m)\to \oH^0(\calP, Rf_*^2\bG_m))\to\oH^1(\calP, Rf_*^1\bG_m).
\end{align*}
In particular, it suffices to prove that $a$ is surjective and that $\oH^1(\calP, R^1f_*\bG_m)^{tor} = \oH^0(\calP, R^2f_*\bG_m)^{tor} = 0$ to guarantee an isomorphism $\oH^2(\calP, \bG_m)^{\text{tor}} \to \oH^2(\calM, \bG_m)^{\text{tor}} $.

\begin{bf}Step 2:\end{bf} If $\iota:Z\to \calP$ is the inclusion of the locus where $\calM\to \calP$ is not an isomorphism, then $R^1f_*\bG_m = \iota_* \mathbb{Z}$. Moreover, for every \'etale morphism $U\to \calP$, we can identify $R^1f_*\bG_m(U)$ with the relative Picard group of $U\times_\calP \calM\to U$. With this identification, the isomorphism $\iota_*\mathbb{Z}\to R^1f_*\bG_m$ sends $1$ to the ideal sheaf of the exceptional divisor. 
\begin{notation} For every geometric point $x\in Z$,
let $A=\calO_{\calP,x}^{\operatorname{sh}}$ with maximal ideal $m$, let $\calB:=\calM\times_{\calP}\Spec(A)$ with coarse moduli space $B$, let $\calZ\subseteq \Spec(A)$ be the pull-back $Z\times_\calP \Spec(A)$ and let $p:\calB\to \Spec(A)$ be the second projection. We summarize the new notations in the following cartesian diagrams:
$$\xymatrix{\calB \ar[d]\ar[r] & \calM \ar[d] \\ \Spec(A)\ar[r] & \calP}\text{ } \text{ } \xymatrix{\calZ \ar[d]\ar[r] & Z \ar[d] \\ \Spec(A)\ar[r] & \calP}$$
\end{notation}

 We can compute  $(R^1f_*\bG_m)_x$ by replacing $\calP$ with $\Spec(A)$. Thus as $A$ is strictly henselian, it suffices to check that $\oH^1(\calB,\bG_m)\cong \mathbb{Z}$. To do so, first observe that $\Pic(\Spec(A))=\{0\}$ as $A$ is a regular local ring. Moreover as $\calZ$ has codimension 2 in $\Spec(A)$, $$\Pic(\Spec(A)\smallsetminus \calZ) = \Pic(\Spec(A)) = \{0\}.$$
Observe now that if $E_{\calZ}\cong \calP(4,6)_{\calZ}$ is the exceptional divisor of $\calB$, the complement of $E_{\calZ}$ in $\calB$ is isomorphic to $\Spec(A)\smallsetminus {\calZ}$. Therefore we have an exact sequence
$$\mathbb{Z}[E_{\calZ}]\xrightarrow{j} \Pic(\calB)\to \Pic(\Spec(A)\smallsetminus {\calZ})\to 0.$$
Therefore $\Pic(\calB)$ is cyclic, so it suffices to check that $\Pic(\calB)$ is not finite. It is enough then to check that the image of the pull-back $\Pic(\calB)\to \Pic(E_{\calZ})$ contains the class of $\calO_{\calP(4,6)_{\calZ}}(1)$.
But this is true: the pull-back map $\Pic(\calP(4,6)_S)\to \Pic(\calP(4,6)_{\calZ})$ (which sends $\calO_{\calP(4,6)_S}(1)\mapsto \calO_{\calP(4,6)_{\calZ}}(1)$) can be factored as 
$$E_{\calZ}\cong \calP(4,6)_{\calZ}\to \calB\to \calM\to \calP(4,6)_S.$$

\begin{bf}
Step 3:
\end{bf} The morphism $a:\oH^1(\calM,\bG_m) \to \oH^0(\calP,R^1f_*\bG_m)$ is surjective.

Indeed, $a$ is the restriction $\Pic(\calM)\to \Pic(E)$, which is surjective from Observation \ref{Oss_pull_back_exceptional_gives_iso_on_pic} (the pull-back of the ideal sheaf of the exceptional divisor is $\calO_E(1)$).

\begin{bf} Step 4:\end{bf} $\oH^1(\calP, R^1f_*\bG_m)^{tor} = 0$.

Indeed, if we denote by $\iota:Z\to \calP$ the inclusion of $Z$, then $R^1f_*\bG_m \cong \iota_*\mathbb{Z}$, where the isomorphism $\iota_*\mathbb{Z}\to R^1f_*\bG_m$ sends $1$ to the ideal sheaf of the exceptional divisor. So $$\oH^1(\calP, R^1f_*\bG_m) = \oH^1(\calP, \iota_*\mathbb{Z}) = \oH^1(Z, \mathbb{Z}).$$
The latter has no torsion from \cite[Corollary 7.9.1]{Wei91}.

\begin{bf}
Step 5:
\end{bf}
In the following two steps we show that for every geometric point $x\in \calP$, the stalk $(R^2f_*\bG_m)_p$ has no torsion (so also $\oH^0(\calP,R^2f_*\bG_m)$ has no torsion). This concludes the proof.

 Our goal is to show that $\oH^2(\calB,\bG_m)$ has no torsion.
Therefore, let $\calX\to \calB$ be a $\bG_m$-gerbe corresponding to a torsion class in cohomology. Let $\hat{A}$ be the completion of $A$ along the ideal of $\calZ$, and let $\hat{\calB}:=\calB\times_{\Spec(A)}\Spec(\hat{A})$ and similarly $\hat{\calX}:=\calX\times_{\Spec(A)}\Spec(\hat{A})$.

\begin{bf}Step 6:\end{bf} The gerbe $\hat{\calX}\to \hat{\calB}$ has a section (so it is trivial).

First, observe that from \cite[Theorem 1.4]{Ols05}, we have $\varprojlim Coh(\calB_n) = Coh(\hat{\calB})$, where we denote by $\calB_n=\calB\times_{\Spec(A)}\Spec(A/m^n)$. Then from Tannaka duality
\begin{align*}
    \Hom_{\hat{\calB}}(\hat{\calB}, \hat{\calX}) &= \Hom_{Coh(\hat{\calB})}(Coh(\hat{\calX}),Coh(\hat{\calB})) = \Hom_{Coh(\hat{\calB})}(Coh(\hat{\calX}),\varprojlim Coh(\calB_n)) \\&=\varprojlim \Hom_{Coh(\hat{\calB})}(Coh(\hat{\calX}), Coh(\calB_n)) = \varprojlim \Hom_{\hat{\calB}}(\calB_n,\hat{\calX}).
\end{align*}
In particular, we have to show that:
\begin{enumerate}
    \item If we can lift $\calB_n\to \hat{\calX}$, then we can also lift $\calB_{n+1}\to \hat{\calX}$, and
    \item The map $\calB_0\to \hat{\calB}$ can be lifted to $\calB_0\to \hat{\calX}$.
\end{enumerate}
We begin by (1). Since lifting $\calB_{n+1}\to \calX$ is equivalent to the gerbe $\calX\times_{\hat{\calB}}\calB_{n+1} \to \calB_{n+1}$ having a section, it suffices to prove that if $\calX\times_{\hat{\calB}}\calB_{n} \to \calB_{n}$ is trivial then also $\calX\times_{\hat{\calB}}\calB_{n+1} \to \calB_{n+1}$ is trivial. 
Thus from Observation \ref{oss_trivial_gerbe_is_1_twisted}, we need to show that we can lift a $1$-twisted line bundle from $\calB_n\hookrightarrow \calB_{n+1}$. 

To check that a line bundle $\calL$ on $\calX\times_{\hat{\calB}}\calB_{n+1}$ is $1$-twisted one needs to check the condition of Observation \ref{oss_trivial_gerbe_is_1_twisted} only at the geometric points. Since the geometric points of $\calX\times_{\hat{\calB}}\calB_{n+1}$ are the same as those of $\calX\times_{\hat{\calB}}\calB_{n}$, any extension of a $1$-twisted line bundle from $\calX\times_{\hat{\calB}}\calB_{n}$ to $\calX\times_{\hat{\calB}}\calB_{n+1}$ will be $1$-twisted. Therefore we can ignore the condition of being 1-twisted, and our goal is to show that we can extend a line bundle $\calL$ from $\calX\times_{\hat{\calB}}\calB_n$ to $\calX\times_{\hat{\calB}}\calB_{n+1}$.

Let $I$ the ideal sheaf of $\Spec(A_n)\to \Spec(A_{n+1})$, let $\calX_0 = \calX\times_{\Spec(A)}\calZ$ and let $\xi:\calX_0\to \calZ$ be the composition of $\calX_0 \xrightarrow{\nu} \calB_0 \xrightarrow{\mu} \calZ$. Then the obstruction to extend $\calL$ lies in $\oH^2(\calX_0,\xi^*I)$. We want to show that $\oH^2(\calX_0,\xi^*I)=0$.
Indeed, since $\calX\to \calB\to B$ is a good moduli space, also $\nu:\calX_0\to B_0 = B\times_{\Spec(A)}\calZ$ is a good moduli space, as good moduli spaces are stable under base change (see \cite[Proposition 4.7]{GMS_jarod}). 
Therefore $\nu_*\calO_{\calX_0} = \calO_{B_0}$ and $\nu_*$ is exact as good moduli spaces are cohomologically affine. Then we have 
$$\oH^2(\calX_0,\xi^*I)=\oH^2(\calX_0,\nu^*\mu^*I) =\oH^2(B_0,\nu_*\nu^*\mu^*I) =\oH^2(B_0,\mu^*I).$$
We aim at showing that $\oH^2(B_0,\mu^*I)=0$. Observe that can compute the derived functor $R\Gamma(B_0,\cdot)$ as a composition of $R\mu_*$ and $R\Gamma(\calZ,\cdot)$. But the latter is an exact functor, since  $\calZ$ is affine (as it is closed in $\Spec(A)$). Therefore $\oH^2(B_0,\mu^*I) = \oH^0(\calZ, R^2\mu_*I)$, and $R^2\mu_*I=0$ since $B_0\to \calZ$ has relative dimension 1. Therefore $\oH^0(\calZ, R^2\mu_*I) = 0$, so $\oH^2(B_0,\mu^*I)=0$ as desired.

To show (2), observe that there is a surjective closed embedding $\iota:\calP(4,6)_{\calZ}\hookrightarrow \calB_0$ (i.e. $\calB_0^{red} = \calP(4,6)_{\calZ}$). In particular, $\iota_*\bG_m = \bG_m$ and $R^i\iota_* = 0$ for $i>0$ (as a quotient of a strictly Henselian ring is strictly Henselian). Thus $\oH^2(\calB_0, \bG_m)^{tor} = \oH^2(\calP(4,6)_{\calZ}, \bG_m)^{tor} = 0$ where the last equality follows from \cite{Minseon}, the fact that $\calZ$ is closed in $\Spec(A)$, and $A$ is strictly Henselian.

\begin{bf}Step 7:\end{bf}
We show that $\calX\to \calB_0$ is trivial.

Let $(\sS ch/\Spec(A))^{op}$ be the opposite category of schemes over $\Spec(A)$ and consider the following three functors.
First, for every scheme $T\to \Spec(A)$, consider the functors of sets of the groupoids $\calH om_{\Spec(A)}(\calB_T, \calX_T)$ and $\calH om_{\Spec(A)}(\calB_T, \calB_T)$. We will denote these functors as $F_1$ and $F_2$. Observe that the map $\calX_T\to \calB_T$ induces a natural transformation $F_1\to F_2$. Lastly consider the constant functor, which we denote by $F_3$, that sends $T\to \Spec(A)$ to a set with a single element which we will denote by $\{\operatorname{Id}\}$. There is a natural transformation $F_3\to F_2$ such that for every map $T\to \Spec(A)$ sends $\operatorname{Id} \in F_3(T\to \Spec(A))$ to the identity in $\calH om_{\Spec(A)}(\calB_T, \calB_T)$. We will denote by $F$ the fiber product $F_1\times_{F_2}F_3$. This is the functor that sends $T\to \Spec(A)$ to the set of sections of $\calX_T\to \calB_T$.

We first show that $F$ commutes with limits. A limit in $(\sS ch/\Spec(A))^{op}$ is a colimit in $(\sS ch/\Spec(A))$, so let $\varinjlim T_i$ be such a colimit. Observe now that, since $\calX$ and $\calB$ are of finite type, and since $\calH om_{\Spec(A)}(\calB_T, \calX_T) = \calH om_{\Spec(A)}(\calB_T, \calX)$ and $\calH om_{\Spec(A)}(\calB_T, \calB_T) =\calH om_{\Spec(A)}(\calB_T, \calB)$,
\begin{center}$F_1(\varinjlim T_i) = \varinjlim F_1(T_i)$ and similarly $F_2(\varinjlim T_i) = \varinjlim F_2(T_i)$. \end{center}Similarly, also $F_3$ commutes with colimits. Notice also that $F$ is a finite limit of the $F_i$, as fiber products are limits. Now the desired commutativity follows from this string of equalities:
$$\varinjlim_j F(T_j) = \varinjlim_j (\varprojlim_i F_i)(T_j) = \varinjlim_j \varprojlim_i F_i(T_j) =  \varprojlim_i \varinjlim_jF_i(T_j) = \varprojlim_i F_i(\varinjlim_j T_j) = F(\varinjlim_j T_j). $$

The first and last equality follow from the definition of $F$, the second equality follows from the definition of limits in sets, the third equality follows from \cite[\href{https://stacks.math.columbia.edu/tag/04AX}{Section 04AX}]{stacks-project}, and the fourth one from the fact that $F_i$ commute with colimits.
Therefore we can apply \cite[Theorem 3.4]{AHR19}, and from the results of the previous step the gerbe $\calX\to \calB$ has a section.

\section{Chow ring of $\overline{\calM}_{1,2}$ and $\calM_{1,2}$}
In this section we will assume that $S $ is a field, which we denote by $\Spec(k)$. As usual, we assume that $6$ is a unit in $k$.
\subsection{Equivariant intersection theory}\label{subsection facts on equivariant chow}In this subsection we recall the relevant definitions that we will need on equivariant intersection theory and we refer the \cite{EG98} for more details. It is useful to keep in mind the following three facts:
\begin{enumerate}
\item If $V$ is a vector bundle on a scheme $X$, then the pull-back map $A^*(X)\to A^*(V)$ is an isomorphism,
\item If $C\subseteq X$ is a closed subset of a scheme $X$, then there is an exact sequence $$A_*(C)\to A_*(X) \to A_*(X\smallsetminus C) \to 0 \text{, and}$$
\item For every $i$ and avery $n>i$ there is a linear action of $G$ on a vector space $V$ with an open subset of codimension at least $n-i$ where $G$ acts freely.
\end{enumerate}

In particular, if $X$ is a scheme, $\pi:V\to X$ a vector bundle of rank $n$,  and $C\subseteq X$ a closed subset of codimension $i+1$, $A_{\ge i}(X) \cong A_{\ge n+ i}(\pi^{-1}(X\smallsetminus C))$.

Assume now that a linear algebraic group $G$ of dimension $g$ acts on $X$, and let $n$ be the dimension of $X$.
One can consider the quotient stack $[X/G]$ and coupling the paragraph above with point (3), for every $n\in \mathbb{Z}$,  it
is natural to define its Chow groups $A_n([X/G])$ as follows.  First, consider a representation $V$ of $G$ of dimension $l$, such that $G$ acts freely on an open subset $U\subseteq V$ of codimension at least $n-i$.  We define $A_i([X/G]) := A_{i + l -g}([X\times U /G])$.
The advantage is that $[X\times U /G]$ is an algebraic space (or a scheme if for example $X$ is projective, see \cite[Proposition 23]{EG98}), so its Chow groups are defined classically. It follows from Bogomolov's double filtration argument that $A_i([X/G])$ are well defined (i.e. the definition does not depend on $V$ or $U$). Moreover, the definition of $A_n([X/G])$ does not depend on the presentation of $[X/G]$: if $[X/G]\cong [Y/H]$ for an algebraic space $Y$ and a group $H$, then $A_n([X/G]) \cong A_n([Y/H])$.

Equivariant Chow groups enjoy some properties of the usual Chow groups. We list now the properties we will use in the rest of the manuscript (and we refer to \cite{EG98} for further details).

Let $f:X\to Y$ be a proper morphism and $g:Z\to Y$ be a flat one. Assume that we have a linear algebraic group $G$ acting on $X,Y$ and $Z$ in a way such that $f$ and $g$ are equivariant. Then we have a proper push-forward $f_*:A_*([X/G])\to A_*([Y/G])$ and a flat pull-back $g^*:A^*([Y/G]) \to A^*([Z/G])$. Both proper push-forward and flat pull-backs are functorial, and the following diagram commutes, where $d$ is the dimension of the fibers of $g$:

\begin{equation}
\label{comm diag}
    \xymatrix{A_{* + d}([X\times_Y Z /G])\ar[d] & A_*([X /G]) \ar[d]^{\text{proper}} \ar[l] \\ A_{* + d}([Z /G]) & A_*([Y /G]) \ar[l]_-{\text{flat}}.}
\end{equation}
In particular, if $U$ is an open subset of a vector bundle as above, then $U\times Y\to Y$ is flat, and we can use the diagram above to explicitly control proper push-forwards on the level of Chow groups.

\begin{remark}\label{r}
For any representable morphism $\calZ\to [X/G]$, the fiber product $Z:=\calX\times_{[X/G]}X$ is an algebraic space with an action of $G$. Moreover, $\calZ = [Z/G]$, the second projection
$Z\to X$ is equivariant, and $\calZ\to [X/G]$ is induced by $Z\to X$. In particular, we can define push-forwards and the pull-backs as above for representable morphisms where the target is a quotient stack.
\end{remark} 

Finally we will also use that if $U\subseteq X$ is an open subset which is invariant for the action of $G$ and with complement $C$, then there is an exact sequence
$$A_*([C/G]) \to A_*([X/G]) \to A_*([U/G])\to 0.$$

\begin{prop}\label{prop_seminorm_is_a_chow_envelope}
Let $f:X^{sn}\to X$ be the seminormalization of $X$ . Then $f$ is a Chow envelope, namely $f_* : A_*(X^{sn})\to A_*(X)$ is surjective.
\end{prop}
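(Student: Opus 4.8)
The plan is to recall that seminormalization is a finite, birational morphism which is an isomorphism over the regular locus (indeed over any normal point), and then apply the general criterion for Chow envelopes: a proper morphism $f:Y\to X$ is a Chow envelope if for every subvariety $V\subseteq X$ there is a subvariety $W\subseteq Y$ mapping birationally (more generally, generically finitely with image $V$ and degree $1$) onto $V$; this forces $f_*\colon A_*(Y)\to A_*(X)$ to hit the class $[V]$ modulo classes supported on proper closed subsets, and one concludes by noetherian induction on dimension. So the first step is to reduce to this combinatorial/geometric statement.

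Second, I would verify the lifting-of-subvarieties condition for $f:X^{sn}\to X$. Let $V\subseteq X$ be an integral closed subscheme. Consider the seminormalization $V^{sn}$ of $V$; since seminormalization commutes with passing to the reduced closed subscheme in an appropriate sense, or more concretely since $f$ is an isomorphism over a dense open $U\subseteq X$ (the seminormal locus of $X$, which contains the regular locus and hence is dense), the strict transform $W$ of $V$ in $X^{sn}$ — that is, the closure of $f^{-1}(V\cap U)$ — is an integral closed subscheme mapping properly and birationally onto $V$. Thus $f_*[W]=[V]$ in $A_*(X)$. This exhibits every generator $[V]$ of $A_*(X)$ as lying in the image of $f_*$, so $f_*$ is surjective, i.e. $f$ is a Chow envelope.

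The only mildly delicate point — and the step I expect to be the main obstacle, though it is standard — is checking that the seminormalization map is genuinely an isomorphism over a dense open, equivalently that the non-seminormal locus is a proper closed subset. This follows because $X$ is generically reduced (indeed, in our applications $X$ is a variety, even generically smooth) and seminormalization is a finite birational morphism that is an isomorphism precisely on the seminormal locus, which is open and contains all normal points; since $X$ has finitely many irreducible components and is reduced on a dense open, the seminormal locus is dense. Once this is in hand, the birational strict-transform argument above goes through verbatim. Alternatively, one can cite the fact that any finite birational morphism is a Chow envelope (an immediate consequence of the exact sequence $A_*(C)\to A_*(X)\to A_*(X\smallsetminus C)\to 0$ recalled above, applied with $C$ the locus over which $f$ fails to be an isomorphism, together with noetherian induction), and apply it to $f:X^{sn}\to X$.
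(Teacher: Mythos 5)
Your argument has a genuine gap, and it occurs precisely at the point the whole statement turns on: subvarieties contained in the non-seminormal locus. Your construction of $W$ as the strict transform $\overline{f^{-1}(V\cap U)}$, with $U$ the open set over which $f$ is an isomorphism, produces nothing when $V\subseteq X\smallsetminus U$ — then $V\cap U=\emptyset$ and there is no strict transform, so the envelope condition is never verified for exactly those $V$ whose classes are not already controlled by excision. Your fallback claim, that \emph{any} finite birational morphism is a Chow envelope, is false, and the sketch via the excision sequence plus noetherian induction breaks at the same spot: excision says nothing about whether classes supported on the non-isomorphism locus $C$ are hit, and $f^{-1}(C)\to C$ is no longer birational, so the induction has no base to stand on. A concrete counterexample: the normalization $\mathbb{A}^2\to X=\{x^2=zy^2\}\subseteq\mathbb{A}^3$ of the Whitney umbrella, $(u,v)\mapsto(uv,v,u^2)$, is finite and birational, but the preimage of the singular line $L$ is an irreducible curve mapping $2:1$ onto it; one checks that $\operatorname{ord}_L(r)$ is even for every rational function $r$, so $[L]$ is a nonzero ($2$-torsion) class in $A_1(X)$, while the image of $f_*$ in $A_1(X)$ is $f_*(A_1(\mathbb{A}^2))=0$. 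So finiteness plus birationality is not the right hypothesis.

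What rescues the seminormalization — and is the paper's actual argument — is a property strictly stronger than birationality which you mention but never use: seminormalization is functorial and is a universal homeomorphism inducing isomorphisms on residue fields. Given any integral closed $V\subseteq X$, the composite $V^{sn}\to V\hookrightarrow X$ factors through $X^{sn}$ by the universal property, giving $j:V^{sn}\to X^{sn}$ with $f\circ j=i\circ\nu$; since $\nu:V^{sn}\to V$ is finite of degree $1$, one gets $f_*j_*[V^{sn}]=i_*\nu_*[V^{sn}]=[V]$, and surjectivity follows generator by generator, with no density or induction needed. (Equivalently: because $f$ is a homeomorphism with trivial residue field extensions, the reduced preimage of \emph{every} $V$ — including those inside the non-seminormal locus — maps onto $V$ with degree one.) Had you pursued the $V^{sn}$ you introduce at the start of your second paragraph instead of switching to strict transforms, you would have landed on this proof; as written, the case $V\subseteq X\smallsetminus U$ is unproved and the alternative route is unsound.
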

We refer the reader to \cite[\href{https://stacks.math.columbia.edu/tag/0EUK}{Tag 0EUK}]{stacks-project} for the definition and properties of the seminormalization.
\begin{proof}
It is clear that $f_*[X^{sn}] = [X]$. Then for any irreducible closed subset $i:V\hookrightarrow X$, $[V] = i_*[V] = i_*\circ \nu_* [ V^{sn}]$, where $\nu:V^{sn}\to V$ is the seminormalization. But the seminormalization is functorial, so $V^{sn}\to X$ lifts to $j:V^{sn}\to X^{sn}$. Then $[V] = i_*\circ \nu_* [ V^{sn}] = f_*\circ j_* [ V^{sn}]$.
\end{proof}
\begin{cor}\label{cor_seminorm_is_chow_envelope_for_stacks}
If $\calX=[Y/G]$ is an algebraic stack and $Y^{sn}$ is the seminormalization of $Y$, then $G$ acts on $Y^{sn}$, the map $[Y^{sn}/G] \to [Y/G]$ is a seminormalization, and it is a Chow envelope.
\end{cor}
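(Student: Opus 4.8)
The plan is to reduce Corollary~\ref{cor_seminorm_is_chow_envelope_for_stacks} to Proposition~\ref{prop_seminorm_is_a_chow_envelope} by descending everything along the smooth presentation $Y\to [Y/G]$. First I would check that $G$ acts on $Y^{sn}$: since seminormalization is functorial (see \cite[\href{https://stacks.math.columbia.edu/tag/0EUK}{Tag 0EUK}]{stacks-project}) and the $G$-action is a morphism $a:G\times Y\to Y$ together with the structure maps, it suffices to observe that $G\times Y^{sn}$ is seminormal (a product of a smooth scheme, hence seminormal, with a seminormal scheme is seminormal, or more directly $G\times Y^{sn}=(G\times Y)^{sn}$ since $G\times -$ is smooth base change and seminormalization commutes with smooth base change), so the composite $G\times Y^{sn}\to G\times Y\xrightarrow{a} Y$ factors uniquely through $Y^{sn}$ by the universal property. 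One then checks the cocycle/associativity and unit diagrams, which commute because they commute after composing with the monomorphism $Y^{sn}\to Y$ and $Y^{sn}\to Y$ is a monomorphism on the relevant schematic loci (or: by uniqueness of the factorizations). This gives the quotient stack $[Y^{sn}/G]$ and a morphism $h:[Y^{sn}/G]\to [Y/G]$.

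Next I would argue that $h$ is a seminormalization in the sense appropriate for stacks, i.e. that $Y^{sn}\to Y$ is stable under the smooth base change $Y\to [Y/G]$ so that $[Y^{sn}/G]\to [Y/G]$ pulls back to $Y^{sn}\to Y$; this is again the fact that seminormalization commutes with smooth morphisms, so the diagram
\begin{equation*}
\xymatrix{Y^{sn}\ar[r]\ar[d] & [Y^{sn}/G]\ar[d]^h \\ Y\ar[r] & [Y/G]}
\end{equation*}
is cartesian. In particular $h$ is integral, universally injective, and an isomorphism over the seminormal locus, so it deserves the name ``seminormalization'' and in any case is a proper, representable, birational-on-each-stratum morphism.

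Finally, for the Chow-envelope statement I would use the definition of equivariant Chow groups recalled in Subsection~\ref{subsection facts on equivariant chow}: for a fixed degree, $A_i([Y/G])=A_{i+l-g}([Y\times U/G])$ and $A_i([Y^{sn}/G])=A_{i+l-g}([Y^{sn}\times U/G])$ for a suitable representation $V\supseteq U$ on which $G$ acts freely with small-codimension complement, and $[Y\times U/G]$, $[Y^{sn}\times U/G]$ are algebraic spaces. Since $G$ acts freely on $U$, the map $Y^{sn}\times U\to (Y\times U)/G$ identifies $[Y^{sn}\times U/G]$ with the seminormalization of $[Y\times U/G]$ (seminormalization commutes with the smooth, indeed \'etale-locally trivial, torsor quotient $Y^{sn}\times U\to [Y^{sn}\times U/G]$, exactly as in the previous paragraph), so Proposition~\ref{prop_seminorm_is_a_chow_envelope}, applied to the algebraic space $[Y\times U/G]$ — using that the proposition and its proof only use the functoriality of seminormalization and the existence of the fundamental-class and pushforward formalism, which hold for algebraic spaces — gives surjectivity of $A_{i+l-g}([Y^{sn}\times U/G])\to A_{i+l-g}([Y\times U/G])$. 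Unwinding the definitions, this is exactly surjectivity of $h_*:A_i([Y^{sn}/G])\to A_i([Y/G])$, and since $i$ was arbitrary $h$ is an equivariant Chow envelope.

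The main obstacle I expect is the bookkeeping in the first two paragraphs: making precise that seminormalization commutes with smooth base change (including the case of the non-separated, non-quasi-compact smooth morphism $G\times Y\to Y$ and the torsor $Y^{sn}\times U\to [Y^{sn}\times U/G]$) and that this really does produce a $G$-action and a cartesian square, rather than the Chow-theoretic conclusion, which is then essentially formal given Proposition~\ref{prop_seminorm_is_a_chow_envelope}. One should also be slightly careful that Proposition~\ref{prop_seminorm_is_a_chow_envelope} as stated is for schemes while $[Y\times U/G]$ may only be an algebraic space; but since $X$ is ultimately projective in our applications (cf. \cite[Proposition 23]{EG98}) one may reduce to the scheme case, or simply note that the proof of the proposition goes through verbatim for algebraic spaces.
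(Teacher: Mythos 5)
Your proposal is correct and follows essentially the same route as the paper: the paper's proof likewise reduces, via the Edidin--Graham definition, to the mixed spaces $Y\times U$ and $Y^{sn}\times U$, uses $Y^{sn}\times U=(Y\times U)^{sn}$, and then invokes Proposition \ref{prop_seminorm_is_a_chow_envelope}. The additional care you take with the $G$-action on $Y^{sn}$, the cartesian square, and the scheme-versus-algebraic-space point merely fills in details the paper leaves implicit.
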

\begin{proof}
As noted before Remark \ref{r}, it suffices to prove the desired statement if we replace $Y$ (resp. $Y^{sn}$) with $Y\times U$ (resp. $Y^{sn}\times U$) for $U$ an open subset of a $G$-vector space as at the beginning of the subsection. Then the desired statement follows from Proposition \ref{prop_seminorm_is_a_chow_envelope}, as $Y^{sn}\times U = (Y\times U)^{sn}$.
\end{proof}
\subsection{Pull-backs and push-forwards for maps between vector bundles}
We prove three lemmas about morphisms between vector bundles that will be useful later.
\begin{lemma}\label{lemma_pull_back_class_vb}
Let $\phi: V\cong V_1\oplus V_2 \to B$ be a vector bundle which is the sum of two vector bundles $V_1$ and $V_2$, and let $n$ be the rank of $V_1$. Consider the projection $p:V\to V_1$ over $V_1$. Then $p$ is flat and if we denote by $[B]\in A_*(V_1)$ the class of the zero section, then $p^*([B]) = \phi^*(c_n(V_1))$, where $c_n$ is the $n$-th Chern class of $ V_1$.
\end{lemma}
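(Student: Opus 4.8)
The plan is to reduce everything to the classical statement that the self-intersection of the zero section of a vector bundle computes its top Chern class, and then chase the identification through the projection $p$. First I would observe that $p:V\cong V_1\oplus V_2\to V_1$ is flat: indeed $V\to V_1$ is itself a vector bundle (with fiber $V_2$, pulled back along $\phi_1:V_1\to B$), hence flat, so the flat pull-back $p^*:A_*(V_1)\to A_*(V)$ is defined. Write $s_1:B\to V_1$ for the zero section of $V_1$ and $[B]=(s_1)_*[B]\in A_*(V_1)$ for its class. The key geometric point is that the preimage $p^{-1}(s_1(B))$ is exactly the subbundle $V_2\subseteq V$ (the second summand), which is a vector bundle over $B$ of the same rank as $V_2$.

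Next I would compute $p^*[B]$ in two ways. On one hand, since $p$ is flat and $s_1(B)\subseteq V_1$ is a closed subset whose preimage under $p$ is $V_2$ (with the reduced, in fact smooth, scheme structure, as $p$ is smooth over $s_1(B)$ so the pull-back is transverse), we get $p^*[B] = [V_2]\in A_*(V)$, the class of the subbundle $V_2$. On the other hand, I want to identify $[V_2]$ with $\phi^*c_n(V_1)$. For this, note that $V_2\subseteq V$ is the zero locus of the tautological section of the pull-back bundle $\phi^*V_1$ on $V$: concretely, the projection $V=V_1\oplus V_2\to V_1$ composed with $\phi$ gives a section $\sigma$ of $\phi^*V_1$ over $V$ whose vanishing locus is precisely $V_2$, and this section is regular (the vanishing is of the expected codimension $n=\operatorname{rank}V_1$). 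Therefore, by the standard fact relating the class of the zero scheme of a regular section to the top Chern class (see \cite{EG98} or Fulton), $[V_2] = c_n(\phi^*V_1)\cap [V] = \phi^*c_n(V_1)$, using that $\phi^*$ on $A_*(V)$ is the inverse of the pull-back isomorphism along $p$ composed with restriction, or more directly that $c_n(\phi^*V_1)\cap[V] = \phi^*(c_n(V_1)\cap[B])$ by functoriality of Chern classes and the projection formula.

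Assembling the two computations gives $p^*[B] = [V_2] = \phi^*c_n(V_1)$, as claimed. In the equivariant setting one runs the same argument after replacing $B$ by $B\times U$ for $U$ an open subset of a $G$-representation with free action in high codimension, as in Subsection \ref{subsection facts on equivariant chow}; all the operations used (flat pull-back, Chern classes, the zero-locus formula) are compatible with this approximation, so no new input is needed. The main obstacle, such as it is, is bookkeeping: one must be careful that the scheme-theoretic preimage $p^{-1}(s_1(B))$ really is $V_2$ with its reduced structure and that the tautological section cutting it out is regular, so that the top-Chern-class formula applies on the nose rather than only up to lower-dimensional corrections; once the flatness/transversality of $p$ over the zero section is noted, this is automatic since $V_2$ is smooth of the expected dimension.
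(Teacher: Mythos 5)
Your argument is correct, but it is organized differently from the paper's. The paper's proof is purely formal: it invokes \cite[Proposition 1.9, Theorem 14.1]{Ful13} to say that $\phi_1^*:A^*(B)\to A^*(V_1)$ is an isomorphism carrying $c_n(V_1)$ to the zero-section class $[B]$, notes that $p$ is flat because it is the base change $V_1\times_B V_2\to V_1$, and concludes in one line from $\phi^*=p^*\circ\phi_1^*$. You instead work on the total space $V$: you compute the flat pull-back $p^*[B]$ geometrically as the cycle $[V_2]$ of the second summand, and then identify $[V_2]$ with $\phi^*c_n(V_1)$ via the tautological section of $\phi^*V_1$ cutting out $V_2$, using the zero-locus/top-Chern-class formula. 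Both routes rest on the same standard fact (zero section equals top Chern class under the pull-back isomorphism), but yours applies it on $V$ for $\phi^*V_1$, whereas the paper applies it on $V_1$ and transports it by functoriality. What your version buys is an explicit description of the cycle $p^*[B]$, which makes the geometry transparent; what it costs is exactly the bookkeeping you flag, namely checking that $p^{-1}(s_1(B))$ is $V_2$ with its natural scheme structure and that the tautological section is regular. That check does go through with no smoothness hypothesis on $B$, since locally $V\cong B\times\bA^{n+m}$ and $V_2$ is cut out by the $n$ linear fiber coordinates of $V_1$, which form a regular sequence; the paper's formulation simply sidesteps the issue, which is why its proof is three sentences long. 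Your remark about the equivariant approximation is harmless but not needed here: the lemma is stated and used for schemes, and its equivariant consequences are extracted later (e.g.\ in Proposition \ref{prop_inclusion_of_a_pt_with_some_coordinates_0_in_weighted_proj_stack}) by applying it to the approximations $X\times U$ directly.
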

\begin{proof}Let $\phi_1:V_1\to B$ the projection. 
From \cite[Proposition 1.9, Theorem 14.1]{Ful13} the map $\phi_1^*:A^*(B)\to A^*(V_1)$ is an isomorphism sending $c_n(V_1)\mapsto [B]$. The map $p$ is flat, since we can identify it with the first projection $V_1\times_B V_2\to V_1$. The claim follows since $\phi^* = p^*\circ \phi_1^*$.
\end{proof}
\begin{observation}\label{Oss_push_forward_power_line_bundle}
Let $V\to X$ be a line bundle. For every $n>0$ we have the $n-$th power morphisms $p_n: V\to V^{\otimes n}$. This is a finite morphism which induces a map $(p_n)_*:A_*(V) \to A_*(V^{\otimes n})$ such that $(p_n)_*([V]) = n[V^{\otimes n}]$.
\end{observation}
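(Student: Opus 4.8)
The plan is to establish the two assertions — that $p_n$ is finite, and that $(p_n)_*[V]=n[V^{\otimes n}]$ — in turn, and neither should be hard. Finiteness can be checked Zariski-locally on $X$: over an open $U\subseteq X$ trivializing $V$ (hence $V^{\otimes n}$), the morphism $p_n$ becomes $\Spec(\calO(U)[t])\to\Spec(\calO(U)[s])$ induced by $s\mapsto t^n$, and $\calO(U)[t]$ is free of rank $n$ over $\calO(U)[s]$ with basis $1,t,\dots,t^{n-1}$; so $p_n$ is finite, and in fact flat of degree $n$. When $X$ is a quotient stack one runs the same computation on the $\bG_m$-equivariant power map, or equivalently reduces to the algebraic-space approximations used to define equivariant Chow groups in Subsection \ref{subsection facts on equivariant chow}; in either case $(p_n)_*$ is defined.

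For the pushforward formula I would exploit the zero sections. Write $\pi\colon V\to X$, $\rho\colon V^{\otimes n}\to X$ for the projections (so that $\rho\circ p_n=\pi$) and $0_X\colon X\hookrightarrow V^{\otimes n}$, $0_V\colon X\hookrightarrow V$ for the zero sections, which are effective Cartier divisors. The flat pullback $\rho^*\colon A_*(X)\to A_{*+1}(V^{\otimes n})$ is an isomorphism by fact (1) of Subsection \ref{subsection facts on equivariant chow}, and intersecting with $0_X$ gives a map $A_{*+1}(V^{\otimes n})\to A_*(X)$ which is inverse to it; hence it suffices to compute $0_X^*\bigl((p_n)_*[V]\bigr)$. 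In the local coordinate $t$ on $V$ one has $p_n^*(0_X)=\{t^n=0\}=n\cdot\{t=0\}$, i.e. the pullback of the zero-section divisor of $V^{\otimes n}$ is $n$ times the zero-section divisor of $V$, while $p_n$ carries $0_V(X)$ isomorphically onto $0_X(X)$. By the projection formula relating proper pushforward with intersection against a Cartier divisor, $0_X^*\bigl((p_n)_*[V]\bigr)=(p_n)_*\bigl(p_n^*(0_X)\cap[V]\bigr)=(p_n)_*\bigl(n[0_V(X)]\bigr)=n[0_X(X)]=n\cdot 0_X^*[V^{\otimes n}]$, and since $0_X^*$ is injective this gives $(p_n)_*[V]=n[V^{\otimes n}]$.

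A more elementary alternative for the second part: since $V\to X$ and $V^{\otimes n}\to X$ are $\A^1$-bundles, their irreducible components are the preimages of the irreducible components of $X$ with the same multiplicities, and $p_n$ maps the component over $X_i$ to the component over $X_i$; so one may assume $X$ integral, in which case $V$ and $V^{\otimes n}$ are integral, $p_n$ is finite and dominant of degree $[K(V):K(V^{\otimes n})]=n$ by the local description, and the claim is the standard formula for the proper pushforward of a fundamental class along a finite dominant morphism.

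I do not expect a genuine obstacle here. The only point requiring a little care is that the paper works over a base scheme (not a field) and ultimately with quotient stacks: one should phrase the finiteness check and the Cartier-divisor computation equivariantly, or invoke the reduction to algebraic-space approximations built into the definition of equivariant Chow groups recalled in Subsection \ref{subsection facts on equivariant chow}.
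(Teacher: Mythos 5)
Your proof is correct. The paper states this as an Observation and offers no proof at all, so there is nothing to compare against: your local computation (trivializing $V$ so that $p_n$ becomes $\calO(U)[t^n]\hookrightarrow\calO(U)[t]$, free of rank $n$, hence finite and flat of degree $n$) together with either the degree-$n$ pushforward of the fundamental class or the zero-section/Cartier-divisor argument is exactly the standard justification one would supply. Two small remarks: in the paper the Chow computations of Section 4 are carried out over a field ($S=\Spec(k)$), not a general base, and the Observation is only ever invoked for honest schemes (the algebraic-space approximations appearing in Lemma \ref{lemma_pushforward_O(-1)_chow} and its applications), so the extra equivariant care you flag, while harmless, is not actually needed.
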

\begin{lemma}\label{lemma_pushforward_O(-1)_chow}
Let $X$ be a scheme, let $\calL$ be a line bundle on $X$, and let $(a_1,...,a_n)$ be positive integers. Assume that $n$ is invertible on $X$. Consider the morphism $$\bigoplus_{i=1}^n p_{a_i} : \calL \to \bigoplus_{i=1}^n \calL^{\otimes a_i}$$ where $p_{a_i}$ is the $a_i$-th power morphism of
Observation \ref{Oss_push_forward_power_line_bundle}, and let us denote this morphism by $\phi$. If we denote by $\calW:=\bigoplus_{i=1}^n \calL^{\otimes a_i}$ and by $\pi:\calW \to X$ the projection, then $\pi^*$ induces an isomorphism $A^*(X) \to A^*(\calW)$ which identifies $\prod a_i c_1(\calL)^{n-1}$ with $\phi_*([\calL])$.
\end{lemma}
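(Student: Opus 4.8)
The plan is to factor $\phi$ through the diagonal embedding $\calL\hookrightarrow\calL^{\oplus n}$, compute the cycle class of the image of this diagonal by means of Lemma~\ref{lemma_pull_back_class_vb}, and then push it forward to $\calW$ using a relative form of Observation~\ref{Oss_push_forward_power_line_bundle}.

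First, $\pi^*:A^*(X)\to A^*(\calW)$ is an isomorphism because $\calW\to X$ is a vector bundle, so the only content is the computation of $\phi_*([\calL])$. Set $\calW_0:=\calL^{\oplus n}$ with bundle projection $\pi_0:\calW_0\to X$, and let $\delta:\calL\hookrightarrow\calW_0$ be the diagonal, a closed immersion onto a sub-line-bundle $\Delta\subseteq\calW_0$. Then $\phi$ factors as $\calL\xrightarrow{\delta}\calW_0\xrightarrow{g}\calW$, where $g:=\bigoplus_{i=1}^n p_{a_i}$ sends $(\ell_1,\dots,\ell_n)$ to $(\ell_1^{\otimes a_1},\dots,\ell_n^{\otimes a_n})$; indeed $g\circ\delta$ sends $\ell$ to $(\ell^{\otimes a_1},\dots,\ell^{\otimes a_n})=\phi(\ell)$. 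Since $\delta$ is a closed immersion and $g$ is finite, $\phi$ is finite, $\delta_*([\calL])=[\Delta]$, and $\phi_*([\calL])=g_*([\Delta])$.

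Next I would identify $[\Delta]\in A_*(\calW_0)$. Choosing a vector-bundle complement $\calK$ of $\Delta$ in $\calW_0$, we have $\calK\cong\calW_0/\Delta\cong\calL^{\oplus(n-1)}$, and $\Delta$ is the preimage of the zero section under the projection $\calW_0\to\calK$. Lemma~\ref{lemma_pull_back_class_vb} then gives $[\Delta]=\pi_0^*\big(c_{n-1}(\calK)\big)=\pi_0^*\big(c_1(\calL)^{n-1}\cap[X]\big)$, using $c_{n-1}(\calL^{\oplus(n-1)})=c_1(\calL)^{n-1}$. Hence $\phi_*([\calL])=g_*\pi_0^*\big(c_1(\calL)^{n-1}\cap[X]\big)$.

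It remains to compute $g_*\circ\pi_0^*$. Write $g=g_n\circ\cdots\circ g_1$, where $g_k$ raises the $k$-th summand to its $a_k$-th power and is the identity on the other summands; thus each $g_k$ has the form $p_{a_k}\oplus\operatorname{id}_{\calE}:\calL\oplus\calE\to\calL^{\otimes a_k}\oplus\calE$ for an appropriate vector bundle $\calE$ over $X$, is finite and flat of degree $a_k$, and commutes with the projections to $X$. In this relative situation Observation~\ref{Oss_push_forward_power_line_bundle} extends as follows: if $[T]$ is the fundamental class of a closed subscheme $T\subseteq X$, then $(g_k)_*$ of the flat pullback of $[T]$ to $(\calL\oplus\calE)|_T$ equals $a_k$ times the flat pullback of $[T]$ to $(\calL^{\otimes a_k}\oplus\calE)|_T$, because $g_k$ restricts over $T$ to a finite flat morphism of degree $a_k$; by linearity $(g_k)_*$ sends any class pulled back from $X$ to $a_k$ times the corresponding pulled-back class. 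Composing over $k$, $g_*\pi_0^*=\big(\prod_i a_i\big)\pi^*$, so $\phi_*([\calL])=\big(\prod_i a_i\big)\pi^*\big(c_1(\calL)^{n-1}\cap[X]\big)$, which is exactly the assertion that $\pi^*$ identifies $\prod_i a_i\,c_1(\calL)^{n-1}$ with $\phi_*([\calL])$.

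The main obstacle is the bookkeeping in the last step — keeping track of which summand is raised to a power at each stage and verifying that the intermediate maps are finite flat of degree $a_k$, so that the relative form of Observation~\ref{Oss_push_forward_power_line_bundle} applies. The conceptual steps, namely the factorization $\phi=g\circ\delta$ and the identification of $[\Delta]$ via Lemma~\ref{lemma_pull_back_class_vb}, are routine.
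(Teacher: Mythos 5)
Your argument is correct and follows essentially the same route as the paper: factor $\phi$ through the diagonal $\calL\hookrightarrow\calL^{\oplus n}$ (whose splitting into a direct summand is exactly where the hypothesis that $n$ is invertible is used), identify the class of the diagonal as $\pi_0^*\bigl(c_1(\calL)^{n-1}\bigr)$ via Lemma \ref{lemma_pull_back_class_vb}, and push forward along the power maps. The only cosmetic difference is the final step, where the paper applies the projection formula once together with $h_*[\calL^{\oplus n}]=\bigl(\prod a_i\bigr)[\calW]$ from Observation \ref{Oss_push_forward_power_line_bundle}, while you factor the power map into single-summand maps of degree $a_k$; both give the factor $\prod a_i$.
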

\begin{proof}
We can factor $\phi$ as the composition of the diagonal morphism $g:\calL\to \calL^{\oplus n}$ and $h:\calL^{\oplus n} \to \calW$, where
$h$ on the $i$-th direct summand is $p_i$. 
The map $g$ fits in an exact sequence
$$0\to \calL\xrightarrow{g} \calL^{\oplus n}\xrightarrow{\psi} \calL^{\oplus n-1} \to 0 $$ where $\psi((a_1,...,a_n)) = (a_2-a_1,...,a_n - a_1)$. Observe that $g$ splits as $n$ is invertible on $X$. Then from Lemma \ref{lemma_pull_back_class_vb}, $\psi$ is flat and if we identify with $X_0\subseteq \calL^{\oplus n-1}$ the zero section and with $q:\calL^{\oplus n}\to X $ the projection, then $$g_*[\calL] = [g(\calL)] = [\psi^{-1}(X_0)] = \psi^*([X_0]) = q^*(c_1(\calL)^{n-1}).$$

Now, $\phi_*([\calL]) = h_*(g_*[\calL]) = h_*(q^*(c_1(\calL)^{n-1})).$ But $\pi\circ h = q$, therefore if we denote by $\alpha =c_1(\calL)^{n-1} $ we have  $$h_*(q^*(\alpha)) = h_*(h^*\pi^*(\alpha)) = h_*(h^*\pi^*(\alpha)\cap [\calL^{\oplus n}]) = \pi^*(\alpha)\cap h_*([\calL^{\oplus n}]) = \left(\prod a_i\right)\pi^*(\alpha)$$
where the second equality follows since $\cap[\calL^{\oplus n}]$ is the identity in the Chow ring of $\calL^{\oplus n}$, and the last one from Observation \ref{Oss_push_forward_power_line_bundle}.
\end{proof}
\begin{cor}\label{corollary_map_vector_bundles_in_chow}
We adopt the notations of Lemma \ref{lemma_pushforward_O(-1)_chow}. Let $V$ be a vector bundle of rank $m$ over $X$ and let $i:\calW \to \calW\oplus V$ be the inclusion. Then if $q:\calW\oplus V\to X$ is the projection, $q^*$ induces an isomorphism $A^*(X) \to A^*(\calW\oplus V)$ which identifies $\left(\prod a_i \right)c_m(V)c_1(\calL)^{n-1}$ with $(i\circ \phi)_*([\calL])$.
\end{cor}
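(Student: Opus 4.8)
The plan is to reduce the statement to Lemma \ref{lemma_pushforward_O(-1)_chow} by isolating the extra factor $\calW$ inside $\calW \oplus V$ and absorbing the rank-$m$ bundle $V$ into a single Chern-class contribution via Lemma \ref{lemma_pull_back_class_vb}. The map $i\circ\phi : \calL \to \calW\oplus V$ factors as $\calL \xrightarrow{\phi} \calW \xrightarrow{i} \calW\oplus V$, and the target $\calW\oplus V$ is a vector bundle over $X$ that splits as the direct sum $\calW \oplus V$, so Lemma \ref{lemma_pull_back_class_vb} applies with $V_1 = V$ and $V_2 = \calW$.

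First I would observe that the inclusion $i:\calW \hookrightarrow \calW\oplus V$ is exactly the inclusion of the zero section of the pulled-back bundle $\pi_\calW^* V$ over $\calW$, where $\pi_\calW:\calW\to X$ is the projection. Equivalently, writing $p:\calW\oplus V \to \calW$ for the projection onto the $\calW$-summand, Lemma \ref{lemma_pull_back_class_vb} (applied to the decomposition $(\calW\oplus V) \cong V \oplus \calW$ over $X$, with $n = m = \operatorname{rk} V$) shows that $p$ is flat and that $p^*([\calW]) = q^*(c_m(V))$, where $[\calW]\in A_*(\calW)$ denotes the class of the zero section $i(\calW)$ and $q:\calW\oplus V\to X$ is the projection. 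Consequently $i_*([\calW]) = [i(\calW)] = p^*[\calW]$ as elements of $A_*(\calW\oplus V)$... more precisely, $i_*\beta = p^*\beta \cdot q^*(c_m(V))$ for any $\beta\in A_*(\calW)$, by the projection formula applied to the flat map $p$ together with $p\circ i = \operatorname{id}_\calW$. Hence $i_*(\phi_*[\calL]) = p^*(\phi_*[\calL])\cdot q^*(c_m(V))$.

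Next I would invoke Lemma \ref{lemma_pushforward_O(-1)_chow} directly: under the isomorphism $\pi_\calW^*: A^*(X)\xrightarrow{\sim} A^*(\calW)$ we have $\phi_*([\calL]) = \pi_\calW^*\bigl(\prod a_i\, c_1(\calL)^{n-1}\bigr)$. Pulling this back along the flat map $p$ and using $\pi_\calW\circ p = q$ gives $p^*(\phi_*[\calL]) = q^*\bigl(\prod a_i\, c_1(\calL)^{n-1}\bigr)$. Combining with the previous paragraph, $(i\circ\phi)_*([\calL]) = q^*\bigl(\prod a_i\, c_1(\calL)^{n-1}\bigr)\cdot q^*(c_m(V)) = q^*\bigl(\prod a_i\, c_m(V)\, c_1(\calL)^{n-1}\bigr)$. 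Finally, $q^*: A^*(X)\to A^*(\calW\oplus V)$ is an isomorphism because $\calW\oplus V$ is a vector bundle over $X$ (fact (1) of Subsection \ref{subsection facts on equivariant chow}), which identifies $\prod a_i\, c_m(V)\, c_1(\calL)^{n-1}$ with $(i\circ\phi)_*([\calL])$, as claimed.

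The only genuinely delicate point is the identity $i_*\beta = q^*(c_m(V))\cdot p^*\beta$: one must make sure Lemma \ref{lemma_pull_back_class_vb} is being applied to the correct splitting (treating $\calW\oplus V$ as a bundle over the base $X$, not over $\calW$) and then transport the resulting statement along $p$ using the projection formula for the flat, finite-type-fibered map $p$ together with $p\circ i = \operatorname{id}$. Everything else is a formal concatenation of Lemma \ref{lemma_pushforward_O(-1)_chow}, flat base change, and the homotopy invariance isomorphism for vector bundles, so no further computation is needed.
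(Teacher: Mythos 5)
Your proposal follows essentially the same route as the paper: factor $i\circ\phi$ through $\calW$, use Lemma \ref{lemma_pushforward_O(-1)_chow} to write $\phi_*[\calL]=\left(\prod a_i\right)\pi_\calW^*c_1(\calL)^{n-1}$, and push forward along $i$ via the projection formula together with the identification of $i_*[\calW]$ with $q^*(c_m(V))$ from Lemma \ref{lemma_pull_back_class_vb}; the final identification under $q^*$ is then formal. The one step you should repair is the application of Lemma \ref{lemma_pull_back_class_vb}: with $V_1=V$ the lemma concerns the projection onto the $V$-summand, say $\mathrm{pr}_V:\calW\oplus V\to V$, and the class of the zero section of $V$ (a copy of $X$ sitting inside $V$), giving $[i(\calW)]=[\mathrm{pr}_V^{-1}(X)]=\mathrm{pr}_V^*[X]=q^*(c_m(V))$, i.e.\ $i_*[\calW]=q^*(c_m(V))$. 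As written, your $p$ is the projection onto the $\calW$-summand and ``$[\calW]\in A_*(\calW)$'' can only denote the fundamental class (the locus $i(\calW)$ is not a cycle on $\calW$), so the asserted identity $p^*([\calW])=q^*(c_m(V))$ is false as stated: that pullback is just $[\calW\oplus V]$. Once this is corrected, your general formula $i_*\beta=p^*\beta\cdot q^*(c_m(V))$ is justified by the projection formula for the proper inclusion $i$ (not for the flat map $p$): $i_*\beta=i_*\bigl(i^*p^*\beta\cap[\calW]\bigr)=p^*\beta\cap i_*[\calW]$, using $p\circ i=\operatorname{id}_\calW$; this is exactly the computation in the paper, performed there directly with $\beta=\pi_\calW^*c_1(\calL)^{n-1}$. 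The remaining points (flatness of $p$, $\pi_\calW\circ p=q$, homotopy invariance making $q^*$ an isomorphism) are fine.
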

\begin{proof} Let $\pi:\calW\to X$ the projection. From Lemma \ref{lemma_pushforward_O(-1)_chow} we have $(i\circ \phi)_*([\calL]) = \left(\prod a_i\right)i_*(\pi^*c_1(\calL)^{n-1})$. But $\pi = q\circ i$ and from Lemma \ref{lemma_pull_back_class_vb}, $i_*[\calW] = q^*(c_m(V))$. So$$ i_*(\pi^*c_1(\calL)^{n-1}) = i_*(i^*(q^*c_1(\calL)^{n-1})\cap [\calW]) =  q^*c_1(\calL)^{n-1}\cap i_*([\calW]) = q^*(c_1(\calL)^{n-1}c_m(V)). $$\end{proof}
\subsection{Preliminaries for $A^*(\overline{\calM}_{1,2})$} We prove some auxiliary results that will be needed for computing $A^*(\overline{\calM}_{1,2})$ and $A^*(\calM_{1,2})$.

\begin{notation}We adopt the following notation:
\begin{enumerate}
    \item $\calM:=\overline{\calM}_{1,2}$, $\calP:= \calP(2,3,4)$,
    and $\calU:=\calP\smallsetminus Z$,
    \item $f:\calM \to \calP$ will be the weighted blow-up of $\calP$ at $Z$ and $\pi:\calM \to \overline{\calM}_{1,1}$ the family of genus one and one pointed stable curves.
\end{enumerate}
\end{notation}

\begin{lemma}\label{lemma_chow_ring_weighted_proj_space}
Given $\{a_i\}_{i=1}^n \subseteq\mathbb{Z}$, we have $A^*(\calP(a_1,...,a_n)) = \mathbb{Z}[t]/a_1\cdot...\cdot a_nt^n$ where $t=c_1(\calO_{\calP(a_1,...,a_n)}(1))$.
\end{lemma}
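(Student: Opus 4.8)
The plan is to compute $A^*(\calP(a_1,\dots,a_n))$ by realizing the weighted projective stack as a quotient and using the excision and homotopy-invariance properties of equivariant Chow groups recalled in Subsection \ref{subsection facts on equivariant chow}. Recall that $\calP(a_1,\dots,a_n) = [(\bA^n\smallsetminus\{0\})/\bG_m]$ where $\bG_m$ acts with weight $a_i$ on the $i$-th coordinate. So by definition $A^*(\calP(a_1,\dots,a_n)) = A^*_{\bG_m}(\bA^n\smallsetminus\{0\})$. First I would compute $A^*_{\bG_m}(\bA^n) = A^*(B\bG_m) = \mathbb{Z}[t]$, where $t = c_1(\calO(1))$ is the first Chern class of the weight-$1$ character (equivalently, the hyperplane class pulled back), using homotopy invariance (fact (1): the pull-back along a vector bundle is an isomorphism, applied to the approximations $U\subseteq V$ of $B\bG_m$).

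Next I would use the excision sequence (fact (2)) for the closed $\bG_m$-invariant subset $\{0\}\subseteq \bA^n$ with open complement $\bA^n\smallsetminus\{0\}$:
$$A^{*}_{\bG_m}(\{0\}) \to A^{*}_{\bG_m}(\bA^n) \to A^{*}_{\bG_m}(\bA^n\smallsetminus\{0\}) \to 0.$$
The middle and left terms are both $\mathbb{Z}[t]$ (the left one with a degree shift by $n$, since $\{0\}$ has codimension $n$ in $\bA^n$), and the first map is cap product with the equivariant top Chern class of the normal bundle of $\{0\}$ in $\bA^n$, i.e. multiplication by the equivariant Euler class of the representation $\bA^n$ with weights $a_1,\dots,a_n$. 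That Euler class is $\prod_{i=1}^n (a_i t) = (a_1\cdots a_n)\, t^n$. Hence the cokernel is $\mathbb{Z}[t]/(a_1\cdots a_n\, t^n)$, which is the claim; one should also record that the class $t$ on the quotient is $c_1(\calO_{\calP(a_1,\dots,a_n)}(1))$, which is immediate since $\calO(1)$ is the line bundle associated to the weight-$1$ character and pull-back along $\bA^n\smallsetminus\{0\}\to \calP(a_1,\dots,a_n)$ carries it to the corresponding class on $\bA^n\smallsetminus\{0\}$.

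The one point requiring a little care—and the main (mild) obstacle—is the identification of the first map in the excision sequence as multiplication by $(a_1\cdots a_n)t^n$. This is the equivariant self-intersection formula for the zero section of the vector bundle $\bA^n \to B\bG_m$ with the given $\bG_m$-linearization; concretely one works with the finite-dimensional approximations $U\subseteq V$, where $\{0\}\times_{\bG_m} U \hookrightarrow \bA^n\times_{\bG_m}U$ is the zero section of a genuine rank-$n$ vector bundle on $[U/\bG_m]$ whose Chern roots are $a_1 t, \dots, a_n t$, and the push-forward of the fundamental class of the zero section is the top Chern class $\prod (a_i t)$ by \cite[Proposition 1.9, Theorem 14.1]{Ful13} (the same references invoked in Lemma \ref{lemma_pull_back_class_vb}). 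Since the approximations compute $A^i$ in a fixed range that grows with $\dim V$, this determines the map in all degrees. Everything else is a direct application of the three facts listed at the start of Subsection \ref{subsection facts on equivariant chow}.
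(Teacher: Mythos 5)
Your proposal is correct and follows essentially the same route as the paper: the paper also views $\calP(a_1,\dots,a_n)$ as the complement of the zero section of the rank-$n$ equivariant bundle $[\bA^n/\bG_m]\to B\bG_m$ and concludes $A^*(\calP(a_1,\dots,a_n)) = A^*(B\bG_m)/(c_n) = \mathbb{Z}[t]/(a_1\cdots a_n t^n)$, simply citing \cite[Lemma 2.2]{MV06} for the excision-plus-Euler-class step that you prove by hand via finite-dimensional approximations and the self-intersection formula.
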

\begin{proof}
As $\calP(a_1,...,a_n)$ is the complement of the zero section of the vector bundle $[\mathbb{A}^n/\bG_m]\to B\bG_m$, where $\mathbb{G}_m$ acts on $\mathbb{A}^n$ with weights $a_1,...,a_n$. Therfore from \cite[Lemma 2.2]{MV06} we have $A^*(\calP(a_1,...,a_n)) = A^*(B\bG_m)/c_n(\mathbb{A}^n) = \mathbb{Z}[t]/a_1\cdot...\cdot a_nt^n$. 
\end{proof}
We will use the following proposition to determine the ring structure on $A^*(\overline{\calM}_{1,2})$.
\begin{prop}\label{prop_inclusion_of_a_pt_with_some_coordinates_0_in_weighted_proj_stack}Consider $\calX = \calP(b, a_1,...,a_n)$ a weighted projective stack. If $\alpha:B\bmu_{b}\to \calX$ is the inclusion of  $[1,0,...,0]\cong B\bmu_{b}$, then:
\begin{enumerate}
    \item $\alpha_*[B\bmu_{b}] = c_1(\calO_\calX(a_1))\cdot ... \cdot c_1(\calO_\calX(a_n))$ in $A_*(\calX)$, and
    \item the $A^*(\calX)$-submodule generated by $\alpha_*[B\bmu_{b}]$ agrees with $\alpha_*A_*(B\bmu_{b})$.
\end{enumerate}
\end{prop}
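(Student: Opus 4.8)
The plan is to realize $\calX = \calP(b,a_1,\dots,a_n)$ as a quotient stack and work with the explicit $\bG_m$-equivariant geometry, exactly as in the proof of Lemma \ref{lemma_chow_ring_weighted_proj_space}. Write $\calX = [(\bA^{n+1}\smallsetminus\{0\})/\bG_m]$ with coordinates $x_0, x_1,\dots, x_n$ of weights $b, a_1,\dots, a_n$. The point $[1,0,\dots,0]$ corresponds to the $\bG_m$-invariant locally closed subscheme $Z_0 = \{x_1 = \cdots = x_n = 0\}\smallsetminus\{0\} \subseteq \bA^{n+1}\smallsetminus\{0\}$, on which $\bG_m$ acts through the weight-$b$ character with stabilizer $\bmu_b$, so that $[Z_0/\bG_m]\cong B\bmu_b$ and $\alpha$ is induced by the $\bG_m$-equivariant closed immersion $Z_0\hookrightarrow \bA^{n+1}\smallsetminus\{0\}$.

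For part (1), first I would compute the pushforward on the vector-bundle model before removing the origin. Over $B\bG_m$ we have the rank-$(n+1)$ bundle $E = [\bA^{n+1}/\bG_m]$, which splits as $L_b \oplus L_{a_1}\oplus\cdots\oplus L_{a_n}$ where $L_w$ is the line bundle of weight $w$ (so $c_1(L_{a_i}) = a_i c_1(L_1)$ and, after restricting to $\calX$, $a_i t = c_1(\calO_\calX(a_i))$). The sub-line-bundle $L_b \subseteq E$ is precisely the closure of $Z_0$ inside $\bA^{n+1}$, viewed $\bG_m$-equivariantly; it is the zero locus of the projection $E \to L_{a_1}\oplus\cdots\oplus L_{a_n}$, a section of a rank-$n$ bundle. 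Hence, by the standard formula for the class of a regular zero locus (the equivariant analogue of Lemma \ref{lemma_pull_back_class_vb}, applied with $V_1 = L_{a_1}\oplus\cdots\oplus L_{a_n}$ and $V_2 = L_b$), the class of $[L_b]$ in $A_*(E)$ equals the pullback of $c_n(L_{a_1}\oplus\cdots\oplus L_{a_n}) = c_1(L_{a_1})\cdots c_1(L_{a_n})$. Now I restrict everything to the open substack $\calX = E\smallsetminus(\text{zero section})$: the excision sequence for equivariant Chow groups (listed in Subsection \ref{subsection facts on equivariant chow}) shows $A^*(E)\to A^*(\calX)$ is surjective, the zero section meets $L_b\smallsetminus\{0\}$ trivially, and $L_b\smallsetminus\{0\}$ maps to $\calX$ with image exactly $\alpha(B\bmu_b)$ (the map $B\bmu_b = [Z_0/\bG_m]\to [(L_b\smallsetminus 0)/\bG_m]$ being an isomorphism since $L_b\smallsetminus 0$ retracts $\bG_m$-equivariantly onto $Z_0$). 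Pushing the identity $[L_b] = \pi^*(c_1(L_{a_1})\cdots c_1(L_{a_n}))$ forward to $\calX$ and using $c_1(L_{a_i})|_\calX = c_1(\calO_\calX(a_i))$ gives $\alpha_*[B\bmu_b] = c_1(\calO_\calX(a_1))\cdots c_1(\calO_\calX(a_n))$, which is (1). I would double-check the regularity/transversality of the zero locus — that $L_b$ is cut out by a regular sequence of equations inside $\bA^{n+1}$ — since that is what makes the zero-locus class formula apply; this is straightforward because the $n$ linear forms $x_1,\dots,x_n$ form a regular sequence.

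For part (2), I want to show the $A^*(\calX)$-module map $A_*(B\bmu_b)\to A_*(\calX)$ has image generated by the single element $\alpha_*[B\bmu_b]$. By the projection formula, $\alpha_*(\xi\cap \alpha^*\eta) = \alpha_*(\xi)\cap\eta$ for $\eta\in A^*(\calX)$, so $\alpha_*A_*(B\bmu_b)$ is automatically closed under the $A^*(\calX)$-action; the content is that $A_*(B\bmu_b)$ is generated as an $A^*(\calX)$-module (via pullback $\alpha^*$) by the fundamental class $[B\bmu_b]$. Here I would use that $A^*(B\bmu_b) = \mathbb{Z}[u]/(bu)$ with $u$ a degree-$1$ class, that $\alpha^*\calO_\calX(1)$ is the generating character of $\bmu_b$ so $\alpha^*t = u$ up to sign, and hence every class in $A^*(B\bmu_b) = A_*(B\bmu_b)$ is $u^k = \alpha^*(t^k)$ times the fundamental class; applying $\alpha_*$ and the projection formula, $\alpha_*(u^k\cap[B\bmu_b]) = t^k\cap\alpha_*[B\bmu_b]$, so indeed $\alpha_*A_*(B\bmu_b) = A^*(\calX)\cdot\alpha_*[B\bmu_b]$. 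The main obstacle I anticipate is keeping the bookkeeping of weights and the identification $\alpha^*\calO_\calX(1) = $ (standard character) clean — i.e., making sure the pullback of the universal class on $\calX$ to the residual-gerbe $B\bmu_b$ really is a generator of $A^*(B\bmu_b)$ and not $b$ times one — and verifying that the excision sequence genuinely identifies the image of $[(L_b\smallsetminus 0)/\bG_m]$ in $A_*(\calX)$ with $\alpha_*A_*(B\bmu_b)$ rather than something larger. Both points are routine but are where an error would most naturally creep in.
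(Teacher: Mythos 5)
Your proposal is correct and follows essentially the same route as the paper: both realize the point as the coordinate-axis sub-line-bundle inside the vector-bundle model $[\bA^{n+1}/\bG_m]\to B\bG_m$, identify its class with the top Chern class of the complementary summand via (the equivariant form of) Lemma \ref{lemma_pull_back_class_vb}, restrict by excision to $\calX$, and use the projection formula for part (2). The only differences are cosmetic: the paper makes the equivariant step explicit by passing through the approximations $U=\bA^m\smallsetminus\{0\}$ before invoking Lemma \ref{lemma_pull_back_class_vb}, and for (2) it argues upstairs on $[\bA^1/\bG_m]\to[\bA^{n+1}/\bG_m]$ where the pullback is an isomorphism, whereas you argue directly on $B\bmu_b\to\calX$ using surjectivity of $\alpha^*$ (and your remark about a retraction is unnecessary, since $Z_0$ already equals $L_b\smallsetminus 0$).
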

\begin{proof}We first prove (1).
Consider $a:\mathbb{A}^1 \to \mathbb{A}^{n+1}$ the map sending $c\mapsto (c,0,...,0)$. We act with $\bG_m$ on $\mathbb{A}^1$ with weight $b$, and on $\mathbb{A}^{n+1}$ with weights $(b,a_1,...,a_n)$, so the map $a$ is equivariant. It induces a representable morphism of the corresponding quotient stacks, and a push-forward $A_*([\bA^1/\bG_m]) \to A_*([\bA^{n+1}/\bG_m])$.
Observe now that there is an open embedding $j: \calX\to [\bA^{n+1}/\bG_m]$,
and the following diagram is cartesian:
$$\xymatrix{B\bmu_b \ar[d]\ar[r] & \calX \ar[d]^{\text{open embedding}} \\ [\bA^1/\bG_m] \ar[r] & [\bA^{n+1}/\bG_m]}.$$
This induces the following diagram on Chow groups:
$$\xymatrix{A_*(B\bmu_b) \ar[r]^{\alpha_*}  & A_*(\calX)\\ A_*([\bA^1/\bG_m]) \ar[r]_{a_*} \ar[u]^{i^*}& A_*([\bA^{n+1}/\bG_m])\ar[u]_{j^*}}.$$
The vertical arrows are surjective, since they come from open embeddings, so to understand the image if $\alpha_*$ it suffices to understand the image of $a_*$ and $j^*$. 

 Consider then $U = \mathbb{A}^m\smallsetminus \{0\}$, with the action of $\mathbb{G}_m$ with weights 1. From Diagram \ref{comm diag}, it suffices to understand the image of $a^U:[\mathbb{A}^1\times U/\bG_m] \to [\mathbb{A}^{n+1}\times U/\bG_m]$ for every $n$. But
$[\mathbb{A}^1\times U/\bG_m] $ is the total space of $\calO_{\mathbb{P}^m}(b)$, $[\mathbb{A}^{n+1}\times U/\bG_m] $ is the total space of $\calO_{\mathbb{P}^m}(b,a_1,...,a_n)$, and the map is the inclusion in the first coordinate. Then
from
Lemma \ref{lemma_pull_back_class_vb}, $$a^U_*[[\mathbb{A}^1\times U/\bG_m]] = \phi^*(c_1(\calO_{\bP^m}(a_1))\cdot ... \cdot \phi^*(c_1(\calO_{\bP^m}(a_1))$$ where $\phi: \calO_{\mathbb{P}^m}(b,a_1,...,a_n) \to \bP^m$ is the projection. This holds for every $m$, so $$a_*[[\mathbb{A}^1/\bG_m]] = c_1(\calO_{[\mathbb{A}^{n+1}/\bG_m]}(a_1))\cdot ... \cdot c_1(\calO_{[\mathbb{A}^{n+1}/\bG_m]}(a_n)).$$Now (1) follows from the commutativity of the diagram above, since $i^*[\mathbb{A}^1/\bG_m] = [B\bmu_b]$ and $j^*c_1(\calO_{[\mathbb{A}^{n+1}\times U/\bG_m]}(a_i)) = c_1(\calO_{\calX}(a_i))$.

For proving (2) it suffices to check that the $A^*([\bA^{n+1}/\bG_m])$-submodule generated by $a_*([\bA^{1}/\bG_m])$ agrees with $a_*A_*([\bA^{1}/\bG_m])$. But as both $[\bA^{1}/\bG_m]$ and $[\bA^{n+1}/\bG_m]$ are vector bundles over $B\bG_m$, the pull-back $a^*:A^*([\bA^{n+1}/\bG_m])\to A^*([\bA^{1}/\bG_m])$ is an isomorphism. In particular it is surjective, so every $x\in A^*([\bA^{1}/\bG_m])$ can be written as $a^*(y)$ for $y\in A^*([\bA^{n+1}/\bG_m])$. Then point (2) follows from projection formula: $a_*a^*(y) = y\cdot a_*([\bA^{1}/\bG_m])$. 
\end{proof}

\begin{prop}\label{prop_chow_ring_of_U}
We have $A^*(\calU) = \mathbb{Z}[t]/24t^2$, where $t=c_1(\calO_{\calP}(1))_{|\calU}$. 
\end{prop}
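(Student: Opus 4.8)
The plan is to compute $A^*(\calU)$ by realizing $\calU = \calP(2,3,4)\smallsetminus Z$ as an open substack whose complement $Z$ has a manageable class, and then use the localization sequence for equivariant Chow groups. Recall from Lemma~\ref{lemma_chow_ring_weighted_proj_space} that $A^*(\calP) = \mathbb{Z}[t]/(24t^3)$ with $t = c_1(\calO_{\calP}(1))$. Since $Z$ is the image of the closed point $[1,1,0]$ (equivalently $\{[s^2,s^3,0]\}$), it is a substack of the form $B\bmu_{2}$ (the stabilizer of such a point, as noted in the proof of Lemma~\ref{lemma_local_desc_of_calP_around_z}) sitting inside $\calP(2,3,4)$. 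The key input is Proposition~\ref{prop_inclusion_of_a_pt_with_some_coordinates_0_in_weighted_proj_stack}: I would first arrange the coordinates so that $Z$ is cut out by the vanishing of two of the weighted coordinates. Concretely, after the change of variables implicit in the identification of $Z$ with $[1,1,0]$, the point $Z$ is the locus where the coordinates of weight $3$ and $4$ (suitably shifted) vanish, so $Z\cong B\bmu_{2}$ embeds via $\alpha$, and Proposition~\ref{prop_inclusion_of_a_pt_with_some_coordinates_0_in_weighted_proj_stack}(1) gives $\alpha_*[Z] = c_1(\calO_{\calP}(3))\cdot c_1(\calO_{\calP}(4)) = 12\,t^2$ in $A_*(\calP)$.

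Next I would invoke the excision sequence $A_*([Z])\to A_*(\calP)\to A_*(\calU)\to 0$ from Subsection~\ref{subsection facts on equivariant chow}. By Proposition~\ref{prop_inclusion_of_a_pt_with_some_coordinates_0_in_weighted_proj_stack}(2), the image of $A_*([Z])\to A_*(\calP)$ is exactly the $A^*(\calP)$-submodule generated by $\alpha_*[Z] = 12t^2$; since $24t^3 = 0$ already in $A^*(\calP)$, this submodule is $(12t^2) = \mathbb{Z}\cdot 12t^2 \subseteq A^*(\calP)$, living in degree $2$. Therefore $A^*(\calU) = A^*(\calP)/(12t^2) = \mathbb{Z}[t]/(24t^3,\,12t^2)$, and since $12t^2 = 0$ forces $24t^3 = 0$ automatically, this simplifies to $\mathbb{Z}[t]/(12t^2)$. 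Here one must be slightly careful: I should double-check whether $\alpha_*[Z]$ is $12t^2$ or $24t^2$, since the answer depends on which pair of weights cuts out $Z$ after the coordinate change. The claimed statement $A^*(\calU)=\mathbb{Z}[t]/24t^2$ indicates that in fact $Z$ is cut out after the change of coordinates by the coordinates of weights $\{?, ?\}$ whose product of weights, together with the relation already present, yields the ideal $(24t^2)$; most likely the shift in Lemma~\ref{lemma_local_desc_of_calP_around_z} means $Z$ is $[1,0,0]$-type in a re-coordinatized $\calP(2,3,4)$ so that $\alpha_*[Z] = c_1(\calO(3))c_1(\calO(4))$, but the relevant normalization producing $24t^2$ must be tracked.

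The main obstacle, therefore, is bookkeeping: correctly identifying $Z$ as a point of the form $[1,0,\dots,0]$ inside a \emph{re-coordinatized} weighted projective stack so that Proposition~\ref{prop_inclusion_of_a_pt_with_some_coordinates_0_in_weighted_proj_stack} applies directly, and getting the numerical coefficient in $\alpha_*[Z]$ exactly right. Once that is pinned down, the localization sequence and part (2) of that Proposition close the argument immediately, since the ring structure is inherited from $A^*(\calP) = \mathbb{Z}[t]/(24t^3)$ and the only new relation is the one coming from $\alpha_*[Z]$. A useful consistency check at the end: the statement asserts $A^*(\calU) = \mathbb{Z}[t]/24t^2$, which is consistent with $\calU$ being an open substack of a smooth proper $2$-dimensional stack with the codimension-$2$ point removed, so that $A^i(\calU)$ should vanish for $i\ge 2$ up to torsion but retain a torsion class in degree $2$ — matching $\mathbb{Z}/24$ in degree $2$ and $\mathbb{Z}t^i$ for higher $i$ killed by the relation.
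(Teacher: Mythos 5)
There is a genuine gap, and it sits exactly where you flagged your uncertainty. First, $Z$ is not a copy of $B\bmu_2$: the stabilizer of $(1,1,0)$ under the $\bG_m$-action with weights $(2,3,4)$ is $\{\lambda:\lambda^2=\lambda^3=1\}=\{1\}$, so $Z\cong\Spec(k)$ is a point with \emph{trivial} automorphism group (the proof of Lemma \ref{lemma_local_desc_of_calP_around_z} says the $\bmu_2$-points are those of the form $[1,0,x]$, which $Z=[1,1,0]$ is not). Consequently your main device — re-coordinatizing $\calP(2,3,4)$ so that $Z$ becomes a coordinate point $[1,0,\dots,0]$ and then applying Proposition \ref{prop_inclusion_of_a_pt_with_some_coordinates_0_in_weighted_proj_stack} — cannot work even in principle: any automorphism of the stack preserves stabilizer groups, and every coordinate point of $\calP(2,3,4)$ carries a nontrivial stabilizer ($\bmu_2$, $\bmu_3$ or $\bmu_4$), whereas $Z$ carries none. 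This also explains why your coefficient comes out wrong: the correct class is $[Z]=24t^2$, not $12t^2$ (rationally this is forced by $\int_{\calP(2,3,4)}t^2=\tfrac1{24}$ and $Z$ having degree $1$), and with $12t^2$ your excision argument would give $\mathbb{Z}[t]/12t^2$, contradicting the statement.

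The missing idea, which is the heart of the paper's proof, is how to actually compute the image of $A_*(Z)\to A_*(\calP)$ when $Z$ is not a coordinate point. The paper does this by passing to the cone: the preimage of $Z$ in $\bA^3$ has closure the cuspidal curve $C=V(\alpha_4,\ \alpha_3^2-\alpha_2^3)$, so $\calU=[(\bA^3\smallsetminus C)/\bG_m]$ and one can run excision on $[\bA^3/\bG_m]$ (whose Chow ring is $\mathbb{Z}[t]$, with no relation). Since $C$ is not normal, one replaces it by its seminormalization $\bA^1\to C$, $t\mapsto(t^2,t^3,0)$, which is a Chow envelope by Corollary \ref{cor_seminorm_is_chow_envelope_for_stacks}; the pushforward $A_*([\bA^1/\bG_m])\to A_*([\bA^3/\bG_m])$ is then computed via Corollary \ref{corollary_map_vector_bundles_in_chow} (with $a_1=2$, $a_2=3$, $V=\calO(4)$), giving image generated by $2\cdot3\cdot4\,t^2=24t^2$, whence $A^*(\calU)=\mathbb{Z}[t]/24t^2$. (Equivalently: $[C]=c_1(\calO(4))\,c_1(\calO(6))=24t^2$, since $C$ is cut out by the weight-$4$ coordinate and the weight-$6$ equation $\alpha_3^2-\alpha_2^3$.) Your excision framework and the use of part (2)-type arguments to identify the image as a cyclic submodule are fine, but without the cuspidal-cone/seminormalization step (or some substitute computing $[Z]=24t^2$), the proof does not close.
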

\begin{proof}
Recall that the point $Z$ is of the form $[t^2,t^3,0]$.
Consider the inclusion of the cusp $C\to \mathbb{A}^3$ given by the points of the form $(t^2,t^3,0)$, composed with the seminormalization
$g:\mathbb{A}^1\to C\to \mathbb{A}^3$. Consider the action of $\bG_m$ on $\mathbb{A}^3$
with weights $2,3,4$ and on $\mathbb{A}^1$ with weight 1. Then $g$ is equivariant, and from Corollary \ref{cor_seminorm_is_chow_envelope_for_stacks} the following sequence is exact:
$$A_*([\mathbb{A}^1/\bG_m]) \xrightarrow{i} A_*([\mathbb{A}^3/\bG_m])\to A_*(\calP\smallsetminus Z) \to 0.$$ 
In particular, it suffices to understand the image of $i$. The argument is now very similar to the one of Proposition \ref{prop_inclusion_of_a_pt_with_some_coordinates_0_in_weighted_proj_stack}.

Consider then $U = \mathbb{A}^n\smallsetminus \{0\}$, with the action of $\mathbb{G}_n$ with weights 1. From Diagram \ref{comm diag}, it suffices to understand the map $j_U:[\mathbb{A}^1\times U/\bG_m] \to [\mathbb{A}^3\times U/\bG_m]$ for every $n$. But
$[\mathbb{A}^1\times U/\bG_m] $ is the total space of $\calO_{\mathbb{P}^n}(1)$, $[\mathbb{A}^3\times U/\bG_m] $ is the total space of $\calO_{\mathbb{P}^n}(2,3,4)$, and the map is of the type of
Corollary \ref{corollary_map_vector_bundles_in_chow}, with $a_1 = 2,$ $a_2 = 3$ and $V =\calO_{\mathbb{P}^n}(4) $.
Then from Corollary \ref{corollary_map_vector_bundles_in_chow}, $(j_U)_*[[\mathbb{A}^1\times U/\bG_m]] = 24p^*c_1(\calO(1))^2$ where $p:[\mathbb{A}^3\times U/\bG_m]\to \bP^n$ is the projection. Moreover, proceeding as in Proposition \ref{prop_inclusion_of_a_pt_with_some_coordinates_0_in_weighted_proj_stack}, one can check that $i(A_*([\mathbb{A}^1/\bG_m]))$ is generated by $i([[\mathbb{A}^1/\bG_m]])$.
Then the desired isomorphism
$A^*(\calU) \cong \mathbb{Z}[t]/24t^2$ holds since the argument above holds for every $n$.
\end{proof}

\begin{lemma}\label{lemma_self_intersection_exceptional}
Let $E$ be the class of the exceptional divisor of $\calM\to \calP$ in $A^*(\calM)$, and let $\iota:E\to \calM$ the inclusion of the exceptional divisor. Then $E^2 = \iota_*(c_1(\calO_E(-1)) $ and every element of $\iota_*A_*(E)$ is of the form $\sum_{i=1}^k a_i E^i$ for $a_i \in \mathbb{Z}$ and $k\in \mathbb{N}$.
\end{lemma}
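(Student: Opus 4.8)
The plan is to deduce both assertions from the self-intersection formula, once the geometry of the exceptional divisor has been pinned down. Throughout I would identify $A^*(\calM)$ with $A_*(\calM)$ (and likewise for $E$) via cap product with the fundamental class, which is legitimate because $\calM=\overline{\calM}_{1,2}$ and $E$ are smooth; in particular $E^j$ will also denote the homology class $c_1(\calO_\calM(E))^j\cap[\calM]$. The one input I would record first is Observation \ref{obs_blowup_has_a_section}: working over the field $k$, where $Z$ is a single point, the exceptional divisor is $E\cong \calP(4,6)$ and the ideal sheaf $\calO_\calM(-E)$ restricts on $E$ to $\calO_E(1)$. Setting $s:=c_1(\calO_E(1))$, this says exactly that $\iota^*c_1(\calO_\calM(E))=c_1(\calO_E(-1))=-s$. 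Since $E$ is an effective Cartier divisor on $\calM$ we also have $\iota_*[E]=c_1(\calO_\calM(E))\cap[\calM]$.

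For the first assertion I would then compute $E^2=c_1(\calO_\calM(E))\cap \iota_*[E]$ and apply the projection formula to move $c_1(\calO_\calM(E))$ inside $\iota_*$ along $\iota$, using $\iota^*c_1(\calO_\calM(E))=c_1(\calO_E(-1))$; this gives $E^2=\iota_*\big(c_1(\calO_E(-1))\cap[E]\big)$. Iterating exactly the same projection-formula step yields, by an immediate induction on $j$, the identity $E^{j+1}=\iota_*\big((-s)^j\cap[E]\big)$ for all $j\ge 0$ (the base case $j=0$ being $\iota_*[E]=E$). In particular $\iota_*(s^j\cap[E])=(-1)^jE^{j+1}$.

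For the second assertion I would invoke Lemma \ref{lemma_chow_ring_weighted_proj_space}: $A^*(E)=A^*(\calP(4,6))=\mathbb{Z}[s]/(24s^2)$ is generated as an abelian group by the monomials $s^j$ ($j\ge 0$), and capping with $[E]$ identifies this group with $A_*(E)$ since $E$ is smooth. Hence $A_*(E)$ is $\mathbb{Z}$-spanned by the classes $s^j\cap[E]$, and applying $\iota_*$ together with the formula above shows that every element of $\iota_*A_*(E)$ is a finite $\mathbb{Z}$-linear combination of the $E^{j+1}$, i.e.\ of the form $\sum_{i=1}^k a_iE^i$ with $a_i\in\mathbb{Z}$. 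I do not expect a genuine obstacle here: the content is just the self-intersection formula plus the known presentation of $A^*(\calP(4,6))$. The two points that deserve a little care are extracting the identification $\iota^*\calO_\calM(E)\cong\calO_E(-1)$ (and hence all the signs) cleanly from Observation \ref{obs_blowup_has_a_section}, and noting that it is the smoothness of $E$ that makes $\alpha\mapsto\alpha\cap[E]$ an isomorphism $A^*(E)\xrightarrow{\sim}A_*(E)$, so that powers of $s$ capped with $[E]$ really do generate $A_*(E)$ over $\mathbb{Z}$.
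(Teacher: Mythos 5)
Your proposal is correct and follows essentially the same route as the paper: the self-intersection/projection formula together with the identification $\iota^*\calO_\calM(E)\cong\calO_E(-1)$ from Observation \ref{obs_blowup_has_a_section} for the first claim, and the presentation $A^*(E)=A^*(\calP(4,6))=\mathbb{Z}[s]/(24s^2)$ from Lemma \ref{lemma_chow_ring_weighted_proj_space} pushed forward along $\iota$ for the second. Your explicit induction giving $E^{j+1}=\iota_*\bigl((-s)^j\cap[E]\bigr)$ just spells out what the paper leaves implicit.
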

\begin{proof}  
We have that $E^2 = \iota_*(c_1(\iota^*\calO_{\calM}(E)))$, so it suffices to check that $\iota^*\calO_{\calM}(E)) = \calO_E(-1)$. This can be carried out in a Zariski neighbourhood of $E$, so it follows from Observation \ref{obs_blowup_has_a_section}.

The second statement follows since from Lemma \ref{lemma_chow_ring_weighted_proj_space}, any element of $A_*(E)$ is of the form $a_0[E] + \sum_{i=1}^k a_i c_1(\calO_E(1))^i$. But $c_1(\calO(1)) = -c_1(\calO_E(-1)) =-c_1(\iota^*\calO_{\calM}(E)).$
\end{proof}
This is the last lemma we will need for determining the ring structure on $A^*(\overline{\calM}_{1,2})$.
\begin{lemma}\label{lemma_O(1)_restricts_to_0_on_E}
Let us denote by $\calL :=\calO_\calP(1)$ and with $y:= c_1(f^*(\calL))$. Then $E.y = 0$ in $A^*(\calM)$ and $\pi_*(y^2) = c_1(\calO_{\overline{\calM}_{1,1}}(1))$.
\end{lemma}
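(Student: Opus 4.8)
The plan is to prove the two assertions separately, both by pulling back along the blow-down $f$ and using the explicit local model from Subsection \ref{subsection:weighted:blowup}. For the first claim $E \cdot y = 0$, I would argue that $f^*\calL$ restricted to the exceptional divisor $E \cong \calP(4,6)$ is trivial. The key point is that $\calL = \calO_\calP(1)$ is pulled back from $\calP$, and the composite $E \hookrightarrow \calM \xrightarrow{f} \calP$ factors through the point $Z$ (indeed $E$ is precisely the fiber $f^{-1}(Z)$). Since $Z$ is zero-dimensional — more precisely it is $B\bmu_2$ when $S$ is a field, as noted in Notation \ref{notation_point_z} — the restriction of $\calL$ to $Z$ is a torsion line bundle, and after pulling back to $E$ one can check it becomes trivial; in any case $c_1$ of the restriction of $f^*\calL$ to $E$ is a torsion class coming from $\Pic(Z)$. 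Then $E \cdot y = \iota_*\bigl(\iota^* c_1(f^*\calL)\bigr) = \iota_*\bigl(c_1(f^*\calL|_E)\bigr)$, and I need this to vanish in $A^*(\calM)$. The cleanest route: $f^*\calL|_E$ is pulled back from $Z$ along $E \to Z$, and since $E \to Z$ factors as $E \cong \calP(4,6) \to \Spec(k) \to Z$ (as $Z$ lives over a field and $\calP(4,6) \to Z$ has a section from Observation \ref{obs_blowup_has_a_section} applied appropriately), $c_1(f^*\calL|_E) = 0$ already in $A^*(E)$, hence $E\cdot y = 0$.

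For the second claim $\pi_*(y^2) = c_1(\calO_{\overline{\calM}_{1,1}}(1))$, I would use the projection formula together with the fact that $\pi = \psi \circ f$ is not quite right — rather, $\pi : \calM \to \calP(4,6) \cong \overline{\calM}_{1,1}$ is the lift of the rational map $\calP(2,3,4) \dashrightarrow \calP(4,6)$, so it is compatible with $f$ only on $\calU = \calP \smallsetminus Z$. The strategy is: restrict everything to $\calU$, where $f$ is an isomorphism and $\pi|_\calU$ agrees with the map $\calU \to \calP(4,6)$ induced by $[\alpha_2,\alpha_3,\alpha_4] \mapsto [\beta_4,\beta_6] = [\alpha_4, \alpha_3^2 - \alpha_2^3 - \alpha_2\alpha_4]$. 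Under this map, $\calO_{\calP(4,6)}(1)$ pulls back to $\calO_\calP(1)|_\calU$ — one checks this from the weights: $\beta_4$ has weight $4$ and $\beta_6$ has weight $6$, matching $\calO(1)$ on $\calP(4,6)$, and these are degree-$4$ resp. degree-$6$ elements in the graded ring of $\calP(2,3,4)$, i.e. sections of $\calO_\calP(4)$ resp. $\calO_\calP(6)$, consistent with $\calO_\calP(1)$ being the generator. So $y|_\calU = \pi|_\calU^* c_1(\calO_{\calP(4,6)}(1))$.

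Now since $\pi : \calM \to \calP(4,6)$ is the universal family of genus-$1$ curves (Theorem \ref{theorem_iso_weighted_blowup_M12}), it is a flat proper morphism of relative dimension $1$, so $\pi_*$ lowers Chow degree by $1$, and $\pi_*(y^2)$ is a degree-$1$ class on $\calP(4,6)$, i.e. a multiple of $c_1(\calO_{\calP(4,6)}(1))$ by Lemma \ref{lemma_chow_ring_weighted_proj_space}. To pin down the coefficient, I would use the projection formula: writing $h = c_1(\calO_{\calP(4,6)}(1))$, on $\calU$ we have $y^2 = \pi^*(h)^2 = \pi^*(h^2)$, but $h^2$ need not vanish, so instead I would compute $\pi_*(y \cdot \pi^* h) = \pi_*(y) \cdot h$ and separately identify $\pi_*(y)$; alternatively, and more robustly, I would observe that the section $\iota : \calP(4,6) \hookrightarrow \calM$ of $\pi$ (the exceptional divisor, by Proposition \ref{prop_family_of_ellpitic}) satisfies $\iota^* y = c_1(f^*\calL|_E) = 0$ by the first part, and use that $y - E$ (or an appropriate combination) pulls back from $\calP(4,6)$ via a section-compatible argument. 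The most direct computation: since $\pi$ has a section $\iota$ with $\pi \circ \iota = \operatorname{id}$ and $\iota^*\calO_\calM(E) = \calO_E(-1)$, one gets $\pi_*(E \cdot y) = 0$ trivially (as $E\cdot y = 0$), and $\pi_*(y \cdot \text{[something restricting to a point]})$ computes the degree; I expect to land on coefficient $1$ because $\calO_\calP(1)$ restricts to $\calO_\calU(1)$ which is $\pi|_\calU^*\calO_{\calP(4,6)}(1)$ and $\pi$ has connected fibers, so $\pi_* \pi^* = \operatorname{id}$ on degree-$0$ classes and the pushforward of $y^2 = \pi^*(h)\cdot y$ is $h \cdot \pi_*(y)$ where $\pi_*(y) = \pi_*\pi^*(h) \cdot 1 = h \cdot \pi_*[\calM]$... — here I need $\pi_*(y) = [\calP(4,6)]$, i.e. $y$ restricts to a point class on fibers, which fails; so the correct bookkeeping is that $\pi_*(y^2)$, with $y = \pi^* h$ on the dense open $\calU$ and the fibers of $\pi$ being genus-$1$ curves meeting $y$'s divisor representative in degree governed by $\calO(1)$, gives $\pi_*(\pi^* h \cdot \pi^* h) = h \cdot \pi_* \pi^* h = h \cdot 0$? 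No. The resolution is that $y^2$ supported on $\calU$ does not see the fiber direction at all when $y$ is a pullback, so $\pi_*(y^2)=0$ would contradict the claim — hence the point is that $y$ is \emph{not} globally a pullback, the discrepancy being exactly supported on $E$, and $\pi_*$ of the correction term contributes $h$.

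\textbf{The main obstacle} I anticipate is precisely this last bookkeeping: $y = c_1(f^*\calO_\calP(1))$ agrees with $\pi^* c_1(\calO_{\calP(4,6)}(1))$ only on $\calU$, and the two differ by a class supported on the exceptional divisor $E$, so $\pi_*(y^2)$ must be computed by carefully tracking that correction through the excision sequence $A_*(E) \to A_*(\calM) \to A_*(\calU) \to 0$ and the self-intersection formula $E^2 = \iota_* c_1(\calO_E(-1))$ from Lemma \ref{lemma_self_intersection_exceptional}. Concretely I would write $y = \pi^* h + (\text{multiple of } E)$, determine the multiple by restricting to $E$ (using $\iota^* y = 0$ and $\iota^* \calO_\calM(E) = \calO_E(-1)$, forcing $y = \pi^* h + c\,E$ with $\iota^*(\pi^* h + cE) = h|_E - c\,c_1(\calO_E(1)) = 0$, and since $h|_E$ and $c_1(\calO_E(1))$ are both the ample generator of $\Pic(\calP(4,6))$ pulled back appropriately, $c = 1$), then expand $y^2 = \pi^*(h^2) + 2 E \cdot \pi^* h + E^2$ and push forward: $\pi_*\pi^*(h^2) = 0$ for degree reasons (it is degree $2$ on a $1$-dimensional stack), $\pi_*(E \cdot \pi^* h) = h \cdot \pi_*(E) = h \cdot [\calP(4,6)] = h$ since $\pi_*[E] = [\calP(4,6)]$ as $E \to \calP(4,6)$ is an isomorphism, and $\pi_*(E^2) = \pi_*\iota_* c_1(\calO_E(-1)) = (\pi\iota)_* c_1(\calO_E(-1)) = -h$ since $\pi\iota = \operatorname{id}$ and $\calO_E(-1)$ is identified with $\calO_{\calP(4,6)}(-1)$; total: $0 + 2h - h = h = c_1(\calO_{\overline{\calM}_{1,1}}(1))$, as claimed. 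Getting every sign and every identification of the generator right — in particular that $\iota^*\calO_\calM(E) = \calO_E(-1)$ versus $\calO_E(1)$, and that $h|_E$ equals $+c_1(\calO_E(1))$ — is where the care is needed, but no deep input beyond the lemmas already proved is required.
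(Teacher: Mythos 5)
Your argument is correct, but for the second statement it takes a genuinely different route from the paper. For $E\cdot y=0$ you and the paper agree: $f$ contracts $E$ to $Z$, so $f^*\calL|_E$ is pulled back from $Z$ and $\iota^*y=0$; note only that $Z$ is \emph{not} $B\bmu_2$ --- by Lemma \ref{lemma_local_desc_of_calP_around_z} it lies in the schematic locus $\calD(X_2X_3)$ and has trivial stabilizer --- but this only simplifies your torsion argument (and in any case $A^1(E)\cong\mathbb{Z}$ is torsion-free, so your fallback works). For $\pi_*(y^2)=c_1(\calO_{\overline{\calM}_{1,1}}(1))$, the paper pushes forward an auxiliary cycle: it takes $V(s)\cong\calP(3,4)\subseteq\calP$ avoiding $Z$, lifts it to $\calM$, computes $j_*\alpha_*[B\bmu_4]=6c_1(\calL)^2$ via Proposition \ref{prop_inclusion_of_a_pt_with_some_coordinates_0_in_weighted_proj_stack}, identifies the image of the $B\bmu_4$-point in $\calP(4,6)$ through a representability/automorphism argument, and cancels the factor $6$ using torsion-freeness of $A^1(\calP(4,6))$. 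You instead write $y=\pi^*h+cE$ (legitimate: $y$ and $\pi^*h$ are Chern classes of line bundles that agree on $\calM\smallsetminus E\cong\calU$ because $f$ and $\pi$ are both induced by the $\bG_m$-equivariant map $F$ over the identity character, so by excision the difference lies in $\iota_*A_1(E)=\mathbb{Z}\cdot E$), pin down $c=1$ by restricting to $E$ using part one and $\iota^*\calO_\calM(E)=\calO_E(-1)$, and then push forward $y^2=\pi^*(h^2)+2E\cdot\pi^*h+E^2$ termwise, getting $0+2h-h=h$. This is self-contained, avoids the auxiliary $\calP(3,4)$ and the automorphism-group identification, and leans instead on part one together with Lemma \ref{lemma_self_intersection_exceptional} and the projection formula; the paper's computation is more geometric but independent of part one. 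One justification in your plan should be repaired: $\pi_*\pi^*(h^2)=0$ is \emph{not} ``for degree reasons on the target'' --- $A^2(\calP(4,6))\cong\mathbb{Z}/24$ is nonzero, so $h^2\neq 0$ --- but it does follow from the projection formula, since $\pi_*(\pi^*(h^2)\cap[\calM])=h^2\cap\pi_*[\calM]$ and $\pi_*[\calM]\in A_2(\calP(4,6))=0$ because the target is $1$-dimensional. With that fix, and discarding the exploratory false starts in the middle of your write-up, the final computation is a complete proof.
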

\begin{proof}
Observe that $E.y = 0$ since the restriction of $f^*(\calO_\calP(1))$ to $E$ is the trivial line bundle.

For the second statement, consider the section $s \in \operatorname{H}^0(\calP, \calO_\calP(2))$ with $V(s) = \{[0,a,b]\}\cong \calP(3,4)$. Consider the point $B\bmu_4 = [0,0,1] \in V(s)$, and let $\alpha:B\bmu_4\to V(s)$ be the inclusion.
From Proposition \ref{prop_inclusion_of_a_pt_with_some_coordinates_0_in_weighted_proj_stack}, $\alpha_*[B\bmu_4] = c_1(\calO_{\calP(3,4)}(3))$, and if $i:V(s)\to \calP$ is the inclusion of $V(s)$, we have $i^*c_1(\calO_\calP(1)) = c_1(\calO_{V(s)}(1))$. Therefore
$\alpha_*[B\bmu_4] = i^*c_1(\calO_\calP(3)).$

But recall that $Z$, the locus we blow-up, does not belong to $V(s)$, so we have a lifting of $i$ as follows:
$$\xymatrix{& & \calM\ar[d]^{f} \\
B\bmu_4 \ar[r]^{\alpha}& V(s)\ar[r]_i \ar[ur]^j &\calP}$$
Moreover $j(V(s)) = V(f^*(s))$. Then:

\begin{align*}
j_*\alpha_*[B\bmu_4] & = j_*i^* c_1(\calO_\calP(3))= j_*j^* f^*c_1(\calO_\calP(3))
= j_*j^* c_1(\calL^{\otimes 3}) = \\ &= c_1(\calL^{\otimes 3})\cdot j_*[V(s)]   = c_1(\calL^{\otimes 3})  \cdot c_1(\calL^{\otimes 2}) = 6 c_1(\calL)^2.
\end{align*}
Recall now that $\pi:\calM \to \calP(4,6)$ is a family of elliptic curves, so $\pi$ is representable, and therefore the morphism of groupoids $\calM(\Spec(k))\to \calP(4,6)(\Spec(k))$ is injective on automorphisms (\cite[Lemma 2.3.9]{AH}). Since the unique point in $B\bmu_4$ has $\bmu_4$ as automorphism group, and since the unique point of $\calP(4,6)$ with $\bmu_4$ as automorphism group is $[1,0]$, the composition
$\pi\circ \alpha \circ j$ agrees with the inclusion $B\bmu_4\hookrightarrow \calP(4,6)$ given by $[0,1]$. From Proposition \ref{prop_inclusion_of_a_pt_with_some_coordinates_0_in_weighted_proj_stack}, $(\pi\circ \alpha \circ j)_* = 6c_1(\calO_{\calP(4,6)}(1))$ so $\pi_*(6c_1(\calL)^2) = 6c_1(\calO_{\calP(4,6)}(1))$. From Lemma \ref{lemma_chow_ring_weighted_proj_space}, the group $A^1(\calP(4,6))$ is non-torsion, so $\pi_*(c_1(\calL)^2) = c_1(\calO_{\calP(4,6)}(1))$ as desired.
\end{proof}

\subsection{Chow rings of $\overline{\calM}_{1,2}$ and $\calM_{1,2}$.}\label{subsection_chow_ring_m12}In this subsection we compute $A^*(\overline{\calM}_{1,2})$ and $A^*(\calM_{1,2})$.
\subsubsection{Chow ring of $\overline{\calM}_{1,2}$.}
\begin{theorem}\label{thm_chow_ring_m12_bar}
We have $$A^*(\overline{\calM}_{1,2}) = \mathbb{Z}[x,y]/(24x^2+24y^2,xy)$$
where $y$ is the class of the exceptional divisor and $x$ the pull-back of $\calO_\calP(1)$ to $\calM$.
\end{theorem}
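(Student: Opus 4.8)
The plan is to combine the excision sequence relating $\calM$ to the open complement of the exceptional divisor with the computation of $A^*(\calU)$ already obtained in Proposition \ref{prop_chow_ring_of_U}, and then to pin down the ring structure using the self-intersection formula for $E$ and the vanishing $E\cdot y = 0$.

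First I would set up the generators. Let $y := c_1(\calO_\calM(E))$ (the class of the exceptional divisor) and $x := c_1(f^*\calO_\calP(1))$. The complement $\calM \smallsetminus E$ is isomorphic to $\calU = \calP \smallsetminus Z$, so the excision sequence of Subsection \ref{subsection facts on equivariant chow} gives an exact sequence
\begin{equation*}
\iota_* A_*(E) \to A_*(\calM) \to A_*(\calU) \to 0 .
\end{equation*}
By Proposition \ref{prop_chow_ring_of_U}, $A^*(\calU) = \mathbb{Z}[t]/24t^2$ with $t$ the restriction of $c_1(\calO_\calP(1))$, which is the restriction of $x$. So $A^*(\calM)$ is generated as a ring by $x$ together with the image of $\iota_*A_*(E)$; by Lemma \ref{lemma_self_intersection_exceptional}, that image consists of $\mathbb{Z}$-linear combinations of powers $E^i = y^i$ (using $E = y$ as a divisor class). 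Hence $A^*(\calM)$ is generated by $x$ and $y$, and a presentation $\mathbb{Z}[x,y] \twoheadrightarrow A^*(\calM)$ exists.

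Next I would find the relations. The relation $xy = 0$ is exactly Lemma \ref{lemma_O(1)_restricts_to_0_on_E} ($E\cdot y = 0$, since $f^*\calO_\calP(1)$ restricts trivially to $E \cong \calP(4,6)$). For the degree-two relation: in $A^*(\calU)$ we have $24t^2 = 0$, so $24x^2$ lies in the kernel of $A^*(\calM)\to A^*(\calU)$, i.e. $24x^2 \in \iota_*A_*(E)$, so $24x^2 = \sum_{i\ge 1} a_i y^i$ for some integers $a_i$; multiplying by $x$ and using $xy=0$ and $24x^3 = 0$ (which holds since $24t^3=0$ already in $A^*(\calU)$ and the kernel in codimension $3$... — more carefully, one should argue this directly) one sees only the $a_1 y$ term can be nonzero a priori, but a cleaner route is to compute $24x^2 + 24y^2$ directly. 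Using Lemma \ref{lemma_self_intersection_exceptional}, $y^2 = E^2 = \iota_*(c_1(\calO_E(-1)))$, and $\iota^* y = c_1(\calO_E(-1))$, so $y^2 = \iota_*\iota^* y$ is the pushforward of a class that, under $E\cong\calP(4,6)$, is $-c_1(\calO_{\calP(4,6)}(1))$. I would then push forward to $\overline{\calM}_{1,1}$ via $\pi$: by Lemma \ref{lemma_O(1)_restricts_to_0_on_E}, $\pi_*(x^2) = c_1(\calO_{\overline{\calM}_{1,1}}(1))$, and $\pi_*(y^2) = \pi_*\iota_*(-c_1(\calO_{\calP(4,6)}(1))) = -c_1(\calO_{\overline{\calM}_{1,1}}(1))$ since $\pi\circ\iota$ is the identity on $\calP(4,6) \cong \overline{\calM}_{1,1}$ (the exceptional divisor is the section). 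Therefore $\pi_*(x^2 + y^2) = 0$; combined with the fact that $24$ kills $c_1(\calO_{\overline{\calM}_{1,1}}(1))$'s failure (from $A^*(\calP(4,6)) = \mathbb{Z}[t]/24t^2$ via Lemma \ref{lemma_chow_ring_weighted_proj_space}) and a comparison in $A^2(\calM)$, this forces $24(x^2+y^2) = 0$.

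Finally I would check that $\mathbb{Z}[x,y]/(24x^2+24y^2, xy) \to A^*(\calM)$ is an isomorphism, not just a surjection. The ring $\mathbb{Z}[x,y]/(24x^2+24y^2,xy)$ is free as a $\mathbb{Z}$-module with basis $1$ in degree $0$, $x,y$ in degree $1$, and $x^2$ (equivalently $-y^2 \bmod 24$-torsion... careful bookkeeping needed) — actually its graded pieces are $\mathbb{Z}$, $\mathbb{Z}^2$, and $\mathbb{Z}/24 \oplus (\text{stuff})$ in higher degree — so I would instead verify injectivity by checking ranks/torsion degree by degree against the excision sequence: in degree $0$ and $1$ using $A^*(\calU)$ and $A^*(E)$ explicitly, and in degree $\ge 2$ by showing the excision sequence $\iota_*A_*(E) \to A^*(\calM) \to A^*(\calU)$ is short exact (the kernel of $\iota_*$ on the relevant classes being generated by the self-intersection relation). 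The main obstacle I anticipate is precisely this last point — controlling the kernel of $\iota_* : A_*(E) \to A_*(\calM)$ and showing no extra relations appear, i.e. that the candidate presentation is not just a quotient of $A^*(\calM)$ — which requires carefully tracking the excision sequences in each codimension together with the pushforward computations along $\pi$ to $\overline{\calM}_{1,1}$ and the known $A^*(\calP(4,6))$.
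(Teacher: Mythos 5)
Your setup is the same as the paper's (generators $x,y$ via the excision sequence and Lemma \ref{lemma_self_intersection_exceptional}, the relation $xy=0$ from Lemma \ref{lemma_O(1)_restricts_to_0_on_E}, and the computation $\pi_*(x^2)=-\pi_*(y^2)=c_1(\calO_{\overline{\calM}_{1,1}}(1))$), but there is a genuine gap precisely at the point you flag as "the main obstacle": you never control the kernel of $\iota_*$, and as a consequence neither the relation $24(x^2+y^2)=0$ nor the completeness of the presentation is actually established. Note that $\pi_*(x^2+y^2)=0$ alone kills nothing in $A^2(\calM)$, since $\pi_*$ has a kernel; and $j^*(x^2+y^2)=t^2$ is nonzero in $A^*(\calU)$, so excision only tells you $24(x^2+y^2)\in\iota_*A_*(E)$, not that it vanishes. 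The missing idea, which the paper uses and which resolves both issues in one stroke, is that the excision sequence splits: the exceptional divisor is a section of $\pi$, so $\pi\circ\iota=\operatorname{Id}$, hence $\pi_*\circ\iota_*=\operatorname{Id}$, so $\iota_*$ is split injective and $\pi_*\oplus j^*\colon A_*(\calM)\to A_*(\overline{\calM}_{1,1})\oplus A_*(\calU)$ is injective. With this, $24(x^2+y^2)=0$ follows immediately from $j^*(24(x^2+y^2))=24t^2=0$ and $\pi_*(24(x^2+y^2))=0$, and there is no separate "comparison in $A^2(\calM)$" left to do.

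The same splitting is what closes the step you leave open (no extra relations). The paper tests an arbitrary homogeneous $p$ with $\Psi(p)=0$ against $j^*$ and $\pi_*$: modulo $xy$ one may take $p=a_0y^n+a_nx^n$; applying $j^*$ gives $a_nt^n=0$ in $\mathbb{Z}[t]/24t^2$, so $24\mid a_n$ for $n\ge 2$; applying $\pi_*$ and using $\pi_*\iota_*=\operatorname{Id}$ gives $\pi_*(y^n)=c_1(\calO_{\overline{\calM}_{1,1}}(-1))^{n-1}$, which generates $A^{n-1}(\overline{\calM}_{1,1})\cong\mathbb{Z}/24$ when $n>2$ (the case $n=2$ is handled by the explicit degree-two computation, where $A^1(\overline{\calM}_{1,1})\cong\mathbb{Z}$ forces $a_0=a_n$), so $24\mid a_0$; finally one checks $24x^n,\,24y^n\in(xy,24x^2+24y^2)$ for $n\ge 2$. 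Your proposed alternative of checking "ranks/torsion degree by degree" would, to succeed, have to re-derive exactly this injectivity of $\pi_*\oplus j^*$; without $\pi_*\circ\iota_*=\operatorname{Id}$ (or some substitute pinning down the kernel of $\iota_*$), the argument does not close.
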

\begin{proof}
If $E$ is the exceptional divisor of $f$, then $E \cong \overline{\calM}_{1,1}$ and we have an exact sequence  
$$A_*(\overline{\calM}_{1,1}) \xrightarrow{\iota_*} A_*(\calM) \to A_*(\calU) \to 0 \text{ } \text{ } \text{ } \text{ } \text{ } (\ast).$$
Observe now that 
the map $\iota_*$ is split injective. In fact, the pushforward along the projection $\pi:\calM \to\overline{\calM}_{1,1}$ is a section of the pushforward along the inclusion of the exceptional divisor $\overline{\calM}_{1,1}\hookrightarrow \calM$.
In particular, we have an isomorphism $$\Phi:A_*(\calM) \xrightarrow{\pi_* \oplus j^*}  A_*(\overline{\calM}_{1,1}) \oplus A_*(\calU).$$
From Lemma \ref{lemma_chow_ring_weighted_proj_space}, $A^*(\overline{\calM}_{1,1}) = \mathbb{Z}[t]/24t^2$ and from Proposition \ref{prop_chow_ring_of_U}, $A^*(\calU) = \mathbb{Z}[x]/24x^2$.
 Therefore
for computing $A^*(\calM)$ we need to understand its intersection pairing. We will denote by $y$ the class of the exceptional divisor of $f$, with $\calL :=\calO_\calP(1)$ and with $x:= c_1(f^*(\calL))$.

First observe that $ \Phi(y^2) = (c_1\calO_{E}(-1), 0)$. Indeed, $j^*y = 0$ so also $j^*y^2 = 0$. On the other hand from Lemma \ref{lemma_self_intersection_exceptional} we have $y^2 = \iota_*c_1\calO_{E}(-1)$ where $\iota:E\to \calM$ is the inclusion of the exceptional divisor. But $\pi\circ \iota = \operatorname{Id}$ so $\pi_*y^2 =\pi_* \iota_*c_1\calO_{E}(-1) = c_1\calO_{E}(-1).$
From Lemma \ref{lemma_O(1)_restricts_to_0_on_E}, $xy = 0$ and $\pi_*x^2 = c_1\calO_{E}(1)$.

Observe now that $x$ and $y$ generate $A^*(\calM)$ as a graded $\mathbb{Z}$-algebra. Indeed it follows from the exact sequence $(\ast)$ that every element of $A_*(\calM)$ is of the form $f^*\xi + \iota_*\eta$. Moreover, every element in $f^*\xi$ is a power of $x$, whereas from Lemma \ref{lemma_self_intersection_exceptional} every element of the form $\iota_*\eta$ is a power of $y$. So we have a graded homomorphism $\Psi:\mathbb{Z}[x,y]\to A^*(\overline{\calM}_{1,2})$, sending $xy$ and $24x^2 + 24y^2$ to 0. To show that it induces the desired isomorphism, it suffices to show that
a homogeneous polynomial $p:=\sum_{i=0}^na_iy^ix^{n-1}$ such that $\Psi(p)=0$ belongs to the ideal $(xy,24x^2 + 24y^2)$. If $p$ has degree 2, the desired statement follows from the descriptions of $\iota(x^2),\iota(y^2), \pi(x^2), \pi(y^2)$ given above, and from $xy=0$. We focus now on the case $\operatorname{deg}(p)>2$.
Since $\Psi(xy)=0$, also $\Psi(a_0y^n + a_n x^n)=0$. Then also $j^*(\Psi(a_0y^n + a_n x^n)) = 0$, so $a_n$ is a multiple of 24: $a_n = 24\alpha_n$ for a certain $\alpha_n$. But when $n>2$ we have $A^{n-1}(\overline{\calM}_{1,1}) = \mathbb{Z}/24\mathbb{Z}$, therefore $\pi_*(a_nx^n) = 24\pi_*(\alpha_nx^n) = 0$. Then
$$0=\pi_* \Psi(a_0y^n + a_n x^n)=\pi_* \Psi(a_0y^n )+\pi_*\Psi( a_n x^n) = \pi_*\Psi( a_0 y^n).$$
But $\pi_*\Psi(a_0y^n) =a_0 \pi_*(y^{n}) = a_0\pi_* \iota_*(\iota^* y^{n-1}) =a_0\pi_* \iota_*(  c_1(\calO_{\overline{\calM}_{1,1}}(-1))^{n-1}) = a_0 c_1(\calO_{\overline{\calM}_{1,1}}(-1))^{n-1},$ (recall that $\pi\circ \iota = \operatorname{Id}$) so $\pi_*\Psi(a_0y^n)=0 $ implies $a_0 = 24\alpha_0$, as $A^{n-1}(\overline{\calM}_{1,1}) = \mathbb{Z}/24\mathbb{Z}$ and it is generated by $c_1(\calO_{\overline{\calM}_{1,1}}(-1))^{n-1}$.
Then $p = 24\alpha_0y^n + 24\alpha_nx^n + \sum_{i=1}^{n-1}a_ix^iy^{n-i}$: we have $p \in (xy,24x^2+24y^2)$.
\end{proof}

\subsubsection{Chow ring of $\calM_{1,2}$.}

We now shift our attention to $A^*(\calM_{1,2})$. Observe that $\calM_{1,2}\subseteq \calU$, and the complement of $\calM_{1,2}$ in $\overline{\calM}_{1,2}$ corresponds to the points parametrizing singular curves. Those are either non-irreducible curves (which are parametrized by the exceptional divisor of $f:\calM \to \calP(2,3,4)$), and the nodal cubic. From Lemma \ref{lemma_fibers_elliptic_curves_without_a_pt}, the complement of $\calM_{1,2}$ in $\calU$
is isomorphic to the fiber of $\overline{\calM}_{1,2} \to \overline{\calM}_{1,1}$ over the nodal cubic, without the marked point. In particular, it is isomorphic to the curve $[\Spec(k[x,y]/y^2-x^3 + 3x -2)/\bmu_2]$ (observe that $y^2-x^3 + 3x -2$ is a nodal cubic without a smooth point), with the action which sends $y\mapsto -y$. There are two fixed points, we will denote them with $p:=[1,0,-3]$ and $q:=[-2,0,-3]$.
\begin{lemma}\label{lemma_calP_minus_Bmu2_points_on_the_cusp}
Let $\calV:=\calP\smallsetminus \{p,q\}$. Then $A^*(\calV) = \mathbb{Z}[t]/12t^2$, where $t= c_1(\calO_\calP(1))$.
\end{lemma}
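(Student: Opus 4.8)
### Proof Proposal

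The plan is to follow the same strategy used to prove Proposition~\ref{prop_chow_ring_of_U}, replacing the cusp $Z$ with the two $B\bmu_2$-points $p$ and $q$. The key point is that $\calP(2,3,4)\smallsetminus\calV$ consists of two closed points, each isomorphic to $B\bmu_2$ (they lie on the locus $[1,0,*]$ where the stabilizer is $\bmu_2$, since both $p=[1,0,-3]$ and $q=[-2,0,-3]$ have vanishing middle coordinate and nonzero first coordinate). So by the excision sequence for equivariant Chow groups we get an exact sequence
$$A_*(B\bmu_2)\oplus A_*(B\bmu_2)\xrightarrow{\ (\alpha_p)_*+(\alpha_q)_*\ } A_*(\calP)\to A_*(\calV)\to 0,$$
where $\alpha_p,\alpha_q$ are the two inclusions. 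First I would record from Lemma~\ref{lemma_chow_ring_weighted_proj_space} that $A^*(\calP)=\mathbb{Z}[t]/24t^3$ with $t=c_1(\calO_\calP(1))$, so I need to identify the image of the two pushforwards inside this ring.

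Next I would compute $(\alpha_p)_*[B\bmu_2]$ and $(\alpha_q)_*[B\bmu_2]$ using Proposition~\ref{prop_inclusion_of_a_pt_with_some_coordinates_0_in_weighted_proj_stack}. Applied with $(b,a_1,a_2)=(2,3,4)$ and the point $[1,0,0]$, that proposition gives $\alpha_*[B\bmu_2]=c_1(\calO_\calP(3))\cdot c_1(\calO_\calP(4))=12\,t^2$, and moreover that the $A^*(\calP)$-submodule generated by this class equals all of $\alpha_*A_*(B\bmu_2)$. The subtlety is that $p$ and $q$ are \emph{not} the coordinate point $[1,0,0]$: they are $[1,0,-3]$ and $[-2,0,-3]$. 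However, these points lie in the open chart $D(X_2)\cong\mathbb{A}^2$ where the $\bmu_2$-fixed locus is the affine line $\{[1,0,*]\}$, and one can move the point $[1,0,0]$ to $[1,0,-3]$ by a $\bG_m$-equivariant automorphism of $\mathbb{A}^3$ (translating the degree-$4$ coordinate $\alpha_4\mapsto \alpha_4-3\alpha_2^2$), which fixes the class computation; a similar rescaling handles $q$. So both pushforwards of the fundamental class equal $12t^2$ in $A_*(\calP)$, and both images $(\alpha_p)_*A_*(B\bmu_2)$, $(\alpha_q)_*A_*(B\bmu_2)$ coincide with the submodule $(12t^2)\cdot A^*(\calP)=\{0,12t^2\}$ (in degrees $\ge 2$, with $24t^3=0$).

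Then I would conclude: the image of the combined pushforward in the excision sequence is exactly the ideal of $A^*(\calP)=\mathbb{Z}[t]/24t^3$ generated by $12t^2$. Quotienting, $A^*(\calV)=\mathbb{Z}[t]/(24t^3,12t^2)=\mathbb{Z}[t]/(12t^2)$, which is the claimed answer. The bookkeeping I should be careful with is that $A^*(\calV)$ as stated is $\mathbb{Z}[t]/12t^2$ with no further relation in degree $3$, which is consistent since $12t^2=0$ forces $12t^3=0$ and in particular $24t^3=0$ is automatic; conversely one must check nothing smaller than $12$ kills $t^2$ in $A^*(\calV)$, which follows because the pushforward image is generated precisely by $12t^2$ (not by $6t^2$ or $t^2$), and this is exactly the content of part (2) of Proposition~\ref{prop_inclusion_of_a_pt_with_some_coordinates_0_in_weighted_proj_stack}.

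I expect the main obstacle to be the justification that the pushforward classes of $[B\bmu_2]$ at the \emph{actual} points $p,q$ agree with the one at the coordinate point $[1,0,0]$ computed by Proposition~\ref{prop_inclusion_of_a_pt_with_some_coordinates_0_in_weighted_proj_stack}; the cleanest way is to exhibit a $\bG_m$-equivariant automorphism of $\bA^3$ carrying the parametrized line $\{(s^2,0,*)\}$ containing the coordinate point onto the line containing $p$ (resp.\ $q$), so that the two inclusions of $B\bmu_2$ differ by an automorphism of $\calP$ and hence induce the same map on Chow groups. Everything else is a direct transcription of the Proposition~\ref{prop_chow_ring_of_U} argument using Diagram~\ref{comm diag} to reduce to the scheme-level computation on projective space.
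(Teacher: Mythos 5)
Your proposal is correct and follows essentially the same route as the paper: excision for the complement of the two $B\bmu_2$-points, the class and submodule computation of Proposition~\ref{prop_inclusion_of_a_pt_with_some_coordinates_0_in_weighted_proj_stack} at the coordinate point, and transport to the actual points $p,q$ via $\bG_m$-equivariant automorphisms of $\bA^3$ (the paper uses the explicit maps $[a,b,c]\mapsto[a,b,c+3a^2]$ and $[a,b,c]\mapsto[a,b,4c+3a^2]$ together with Diagram~\ref{comm diag} to get $j^{(p)}_*=j^{(q)}_*=i_*$). The only cosmetic differences are that the paper removes $p$ and $q$ in two successive excision sequences rather than one combined sequence, and spells out the automorphism for $q$ that you left as ``a similar rescaling.''
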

\begin{proof}Let $i:B\bmu_2 \to \calP$ be the inclusion of $[1,0,0]$, and $j^{(p)}$ (resp. $j^{(q)}$) the inclusion $B\bmu_2 \to \calP$ of $p$ (resp. $q$). 
Consider the maps $\sigma^{(p)}:\calP \to \calP$ and $\sigma^{(q)}:\calP \to \calP$, which send $[a,b,c]\mapsto [a,b,c+3a^2]$ and $[a,b,c]\mapsto [a,b,4c+3a^2]$. They are automorphisms such that $\sigma_i^*c_1(\calO_\calP(1)) = c_1(\calO_\calP(1))$, and $\sigma^{(p)}\circ j^{(p)} =\sigma^{(q)}\circ j^{(q)} =  i$. But then the following diagrams are cartesian, with the horizontal arrows being isomorphisms and the vertical ones proper:
$$\xymatrix{B\bmu_2 \ar[d]_-{j^{(p)}} \ar[r] & B\bmu_2 \ar[d]^i \\ \calP\ar[r]_-{\sigma^{(p)}} & \calP} \text{  }\text{  }\text{  }\text{  }\text{  }\text{  }\xymatrix{B\bmu_2 \ar[d]_-{j^{(q)}} \ar[r] & B\bmu_2 \ar[d]^i \\ \calP\ar[r]_-{\sigma^{(q)}} & \calP}$$
Then from Diagram \ref{comm diag}, $j^{(q)}_* = (\sigma^{(q)})^*\circ i_*$ and $j^{(p)}_* = (\sigma^{(p)})^*\circ i_*$. But $(\sigma^{(p)})^*$ and $(\sigma^{(q)})^*$ are the identity on $A^*(\calP)$, so $j^{(p)}_* = j^{(q)}_*$,
and from Proposition \ref{prop_inclusion_of_a_pt_with_some_coordinates_0_in_weighted_proj_stack} the submodule of $A^*(\calP)$ generated by $c_1(\calO_{\calP}(3))\cdot c_1(\calO_{\calP}(4))$ is $j^{(p)}_*(A_*(B\bmu_2))$. Now the desired statement follows from the following two exact sequences, and Lemma \ref{lemma_chow_ring_weighted_proj_space}
$$A_*(B\bmu_2) \to A_*(\calP) \to A_*(\calP\smallsetminus j^{(p)}(B\bmu_2)) \to 0 \text{ and }A_*(B\bmu_2) \to A_*(\calP\smallsetminus j^{(q)}(B\bmu_2)) \to A_*(\calV) \to 0$$
\end{proof}
\begin{lemma}\label{lemma_generators_chow_nodal_Stacky_curve}
Let $C:=V(y^2-x^3 + 3x -2)\subseteq \Spec(k[x,y])$, and $\calC:=[C/\bmu_2]$, where the action of $\bmu_2$ sends $y\mapsto -y$. Then $\calC$ has two
points $i^{(p)}:B\bmu_2 \to \calC$ and $i^{(q)}:B\bmu_2 \to \calC$ with non-trivial stabilizer, and $ \{[\calC],i^{(p)}_*A_*(B\bmu_2),i^{(q)}_*A_*(B\bmu_2)\}$ is a set of generators for $A_*(\calC)$.  
\end{lemma}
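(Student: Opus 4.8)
The plan is to decompose the nodal cubic curve $C$ as a union of an affine chart and the stacky points, and then use the excision/localization sequence repeatedly. First I would observe that $C = V(y^2 - x^3 + 3x - 2)$ is an irreducible affine plane curve with a single node, and it is rational (parametrized by the seminormalization $\A^1 \to C$), and the $\bmu_2$-action $y \mapsto -y$ has exactly two fixed points, namely the two points on the $x$-axis where $y = 0$, i.e. where $x^3 - 3x + 2 = (x-1)^2(x+2) = 0$; these are $x = 1$ (the node) and $x = -2$ (a smooth point). This identifies $i^{(p)} : B\bmu_2 \to \calC$ with the inclusion of $[1,0]$ and $i^{(q)} : B\bmu_2 \to \calC$ with $[-2,0]$, establishing the first assertion. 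On the open complement $\calC \smallsetminus \{i^{(p)}(B\bmu_2), i^{(q)}(B\bmu_2)\}$ the $\bmu_2$-action is free, so this open substack is an algebraic space (in fact a scheme, an open subset of an affine rational curve).

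Next I would run the excision sequence for equivariant Chow groups stated at the end of Subsection \ref{subsection facts on equivariant chow}. Write $U := C \smallsetminus \{p, q\}$ (an open $\bmu_2$-invariant subscheme) with complement the two fixed points. Then
$$A_*(B\bmu_2) \oplus A_*(B\bmu_2) \xrightarrow{i^{(p)}_* + i^{(q)}_*} A_*(\calC) \to A_*([U/\bmu_2]) \to 0.$$
Since $\bmu_2$ acts freely on $U$, $[U/\bmu_2]$ is (the quotient scheme) $U/\bmu_2$, which is an open subscheme of a rational curve, hence $A_*(U/\bmu_2)$ is generated by $[U/\bmu_2]$ together with classes of finitely many closed points; but closed points of a curve that is generically a scheme all lift (via proper pushforward along $[U/\bmu_2] \to \calC$) to classes in $A_*(\calC)$ coming from $[\calC]$ after multiplication, or more simply: $A_*$ of a $1$-dimensional variety is generated in degrees $0$ and $1$ by $[U/\bmu_2]$ and point classes, and the point classes are torsion/zero or pull back from the fundamental class; in any case the image of $[\calC]$ under restriction is $[U/\bmu_2]$, and $0$-cycles on a rational curve are generated by a single point class which is the image of an appropriate multiple under $\cap [\calC]$. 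Chasing this through the exact sequence: anything in $A_*(\calC)$ maps to an element of $A_*(U/\bmu_2)$ expressible via $[U/\bmu_2]$ and a point class, both of which lift to $\Z\cdot[\calC]$, so the difference lies in the image of $i^{(p)}_* + i^{(q)}_*$. Hence $\{[\calC], i^{(p)}_* A_*(B\bmu_2), i^{(q)}_* A_*(B\bmu_2)\}$ generates $A_*(\calC)$.

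The main obstacle I expect is handling the $0$-cycles on the rational curve $U/\bmu_2$ carefully: I must check that every closed point class on this curve is in the $\Z$-span of the image of $[\calC]$ under intersection with divisors, or else is hit by the fixed-point contributions, so that nothing in $A_0$ escapes the proposed generating set. This is routine since $U/\bmu_2$ is an open subscheme of a rational affine curve (its seminormalization is an open in $\A^1$, on which every $0$-cycle is rationally equivalent to a multiple of any chosen point, and that point pulls back from $\calC$), but it requires invoking Corollary \ref{cor_seminorm_is_chow_envelope_for_stacks} to pass to the seminormalization and Proposition \ref{prop_seminorm_is_a_chow_envelope} to know the seminormalization map is a Chow envelope, so I would spell out that step. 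The identification of the two stacky points and the freeness of the action away from them is immediate from the explicit equation $(x-1)^2(x+2)$, so the only real content is the generation statement, which the localization sequence plus seminormalization delivers.
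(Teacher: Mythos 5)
Your overall route is the paper's: identify the two $\bmu_2$-fixed points (the node at $(1,0)$ and the smooth point $(-2,0)$, coming from $x^3-3x+2=(x-1)^2(x+2)$), excise them via the localization sequence, and reduce to showing that $A_*(\calC\smallsetminus\{p,q\})$ is generated by its fundamental class. The gap is at exactly that last step, which is the only real content of the lemma. By exactness, the only classes of your proposed generating set that survive restriction to $\calC\smallsetminus\{p,q\}$ are the multiples of $[\calC\smallsetminus\{p,q\}]$, which sit in degree $1$; so what you must prove is that $A_0(\calC\smallsetminus\{p,q\})=0$. Your justification --- every $0$-cycle is a multiple of a chosen point class ``and that point pulls back from $\calC$'' (or ``is hit by the fixed-point contributions'', or is ``the image of an appropriate multiple under $\cap[\calC]$'') --- does not deliver this: lifting a point with trivial stabilizer to a class in $A_0(\calC)$ does not place it in the span of $\{[\calC],\, i^{(p)}_*A_*(B\bmu_2),\, i^{(q)}_*A_*(B\bmu_2)\}$; that inclusion is precisely the degree-$0$ part of the statement being proved, so as written the argument is circular. (Note also that ``every $0$-cycle is rationally equivalent to a multiple of a chosen point'' is the property of a \emph{projective} rational curve; on the open affine curve you need the stronger vanishing.)

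The fix is short and is what the paper does: the invariant ring of $k[x,y]/(y^2-x^3+3x-2)$ under $y\mapsto -y$ is $k[x]$, i.e.\ $(x,y)\mapsto x$ exhibits $C\to\bA^1$ as a $2{:}1$ cover which is a $\bmu_2$-torsor exactly where $x^3-3x+2\neq 0$, so $\calC\smallsetminus\{p,q\}$ is an open subscheme $U$ of $\bA^1$. Since $A_0(\bA^1)=0$ (any point is the divisor of a linear polynomial) and the restriction $A_*(\bA^1)\to A_*(U)$ is surjective with $[\bA^1]\mapsto[U]$, one gets $A_*(U)=\mathbb{Z}\cdot[U]$, which closes the localization argument. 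Your detour through the seminormalization is both unnecessary (after removing the node, $U$ is smooth) and potentially misleading: the implicit heuristic that the seminormalization of a rational curve is open in $\bA^1$ is false in general (a nodal cubic is already seminormal), so Proposition \ref{prop_seminorm_is_a_chow_envelope} alone would not have produced the needed vanishing; the honest input is the explicit identification of the free-locus quotient with an open subset of $\bA^1$.
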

\begin{proof}
From the inclusion $i^{(p)}\sqcup i^{(q)}$ of $B\bmu^2 \sqcup B\bmu_2$, we have the exact sequence 
$$A_*(B\bmu_2)\oplus A_*(B\bmu_2) \xrightarrow{i^{(p)}\oplus i^{(q)}} A_*(\calC) \xrightarrow{j} A_*(\calC\smallsetminus \{p,q\})\to 0$$
so it suffices to show that the image of $[\calC]$ through $j$ generates $A_*(\calC\smallsetminus \{p,q\})$. But the two points with non-trivial stabilizers on $\calC$ correspond to the nodal point of $C$, and another smooth point. Consider the map $C\to \mathbb{A}^1$ that sends $(x,y)\mapsto x$. This is a $2:1$ finite map, that is a $\bmu_2$-\'etale cover over the $x$ such that $x^3-3x+2\neq 0$. Therefore $\calC\to \mathbb{A}^1$ is an isomorphism over the points with no stabilizer. Then
$\calC\smallsetminus \{p,q\}$ is an open subscheme $U$ of $\mathbb{A}^1$, so the desired statement follows since $A_*(\bA^1)$ is generated by $[\bA^1]$, and the (surjective) restriction $A_*(\bA^1)\to A_*(U)$ sends $[\bA^1]\mapsto [U]$.
\end{proof}
\begin{theorem}\label{theorem_chow_ring_m1,2} We have $A^*(\calM_{1,2})\cong\mathbb{Z}[t]/12t$ where $t$ is the restriction of $\calO_\calP(1)$ to $\calM_{1,2}$.
\end{theorem}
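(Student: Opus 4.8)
The plan is to realize $\calM_{1,2}$ as the complement, inside $\calV=\calP\smallsetminus\{p,q\}$, of a $1$-dimensional substack, and to run the localization exact sequence, feeding in the computation $A^*(\calV)=\mathbb{Z}[t]/12t^2$ of Lemma~\ref{lemma_calP_minus_Bmu2_points_on_the_cusp}. As recalled above, $\calM_{1,2}=\calU\smallsetminus\calC$, where $\calC=[C/\bmu_2]$ is the locus of irreducible nodal curves; since $Z$ lies on the closure $\overline{\calC}$ of $\calC$ in $\calP$, and $p,q$ lie on $\overline{\calC}$ as well, this rewrites as $\calM_{1,2}=\calP\smallsetminus\overline{\calC}=\calV\smallsetminus(\overline{\calC}\smallsetminus\{p,q\})$.

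First I would identify $\overline{\calC}$ explicitly. By Lemma~\ref{lemma_fibers_elliptic_curves_without_a_pt} the fibers of $f$ are $V(y^2-x^3-\beta_4x-\beta_6)$ with $\beta_4,\beta_6$ of weights $4,6$, so on $\calU$ the singular-fiber locus is cut out by the discriminant $\Delta=-16(4\beta_4^3+27\beta_6^2)$, which, being a polynomial in $\alpha_2,\alpha_3,\alpha_4$, is a section of $\calO_\calP(12)$; since $Z\in V(\Delta)$ we have $V(\Delta)=\calC\cup\{Z\}=\overline{\calC}$ as sets, and $\overline{\calC}$ is integral because $\calC=[C/\bmu_2]$ is. Moreover $\Delta$ is squarefree: otherwise $V(\Delta)$ would have singular locus of codimension $1$, whereas $\mathrm{Sing}(V(\Delta))\subseteq\mathbb{A}^3$ is contained in the union of the one-dimensional cones over $Z$ and over $p=[1,0,-3]$. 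Hence $\overline{\calC}=V(\Delta)$ as reduced substacks and $[\overline{\calC}]=c_1(\calO_\calP(12))=12t$ in $A^1(\calP)$. A direct check shows that $p$ and $q$ are the only points of $\overline{\calC}$ with nontrivial stabilizer and that $Z$ is the only point of $\overline{\calC}$ not lying in $\calC$; thus $\overline{\calC}\smallsetminus\{p,q\}$ is an algebraic space, and combining Lemma~\ref{lemma_generators_chow_nodal_Stacky_curve} with two localization sequences — first delete $p,q$ from $\calC$, then adjoin the point $Z$ — we find that $A_*(\overline{\calC}\smallsetminus\{p,q\})$ is generated, as a group, by the fundamental class $[\overline{\calC}\smallsetminus\{p,q\}]$ and by $[Z]$.

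It then remains to compute the image of the pushforward $A_*(\overline{\calC}\smallsetminus\{p,q\})\to A_*(\calV)$. The fundamental class goes to $[\overline{\calC}]|_\calV=12t$. For $[Z]$, the cuspidal parametrization $\mathbb{A}^1\to\calP$, $s\mapsto[s^2,s^3,0]$, is $\bG_m$-equivariant, and the seminormalization/Chow-envelope computation carried out in the proof of Proposition~\ref{prop_chow_ring_of_U} gives $[Z]=24t^2$ in $A_*(\calP)$, which restricts to $24t^2=0$ in $A^2(\calV)=\mathbb{Z}/12$. By the projection formula the image is therefore the ideal $(12t)\subseteq A^*(\calV)=\mathbb{Z}[t]/12t^2$, and the localization exact sequence yields
\[
A^*(\calM_{1,2})=\bigl(\mathbb{Z}[t]/12t^2\bigr)/(12t)=\mathbb{Z}[t]/12t,
\]
with $t$ the restriction of $c_1(\calO_\calP(1))$, as claimed. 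I expect the main obstacle to be the geometric bookkeeping around $\overline{\calC}$: computing $\mathrm{Sing}(V(\Delta))$ to conclude that $\Delta$ is reduced, and checking that $p,q$ are exactly the stacky points of $\overline{\calC}$ while $Z$ is the only point of $\overline{\calC}$ outside $\calC$. Once these are granted, the remaining argument is a short diagram chase using only the Chow rings of $\calV$ and $\calC$ established above.
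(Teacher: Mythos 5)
Your argument is correct, and it reaches the same answer by a genuinely different excision than the paper's. The paper works with $\calU=\calP\smallsetminus Z$ and the sequence $A_*(\calC)\to A_*(\calU)\to A_*(\calM_{1,2})\to 0$, feeding in $A^*(\calU)=\mathbb{Z}[t]/24t^2$ (Proposition \ref{prop_chow_ring_of_U}); it then gets $[\calC]=12t$ not from the discriminant divisor itself but from the computation $\Pic([\mathbb{A}^3\smallsetminus V(f)/\bG_m])=\mathbb{Z}/12$ (as in \cite[Proposition 6.3]{AI19}, via \cite[Proposition 2.10]{Bri13}), and kills the stacky contributions $j^{(p)}_*,j^{(q)}_*$ using Lemma \ref{lemma_calP_minus_Bmu2_points_on_the_cusp} inside $\calU$. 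You instead excise $\overline{\calC}\smallsetminus\{p,q\}$ from $\calV$, which lets you quote Lemma \ref{lemma_calP_minus_Bmu2_points_on_the_cusp} wholesale for $A^*(\calV)=\mathbb{Z}[t]/12t^2$, but forces you to (i) identify $\overline{\calC}$ with the reduced discriminant $V(\Delta)$, $\Delta$ a section of $\calO_\calP(12)$, so that $[\overline{\calC}]=12t$ — your squarefreeness argument via the Jacobian locus is valid (in characteristic prime to $6$ the non-smooth locus of $V(\Delta)$ in $\mathbb{A}^3$ is exactly the union of the cones over $Z$ and $p$, of dimension $1$, while a square factor would force it to be $2$-dimensional) — and (ii) account for the extra point $Z$, whose class $24t^2$ (correctly recycled from the proof of Proposition \ref{prop_chow_ring_of_U}, or obtained directly since the reduced cone over $Z$ is the complete intersection $V(\alpha_4,\alpha_3^2-\alpha_2^3)$) dies in $A^2(\calV)=\mathbb{Z}/12$. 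So the trade is: the paper's route avoids the geometric bookkeeping about $\overline{\calC}$ at the price of the Picard-group computation of the complement of the discriminant; yours avoids that computation but must verify reducedness of $V(\Delta)$ and that $Z\in\overline{\calC}$ (needed so that $\calV\smallsetminus(\overline{\calC}\smallsetminus\{p,q\})$ really is $\calM_{1,2}$; this follows from purity of the hypersurface $V(\Delta)$, or from an explicit degeneration of nodal fibers to $Z$ — make this explicit rather than asserting it). Two small touch-ups: the appeal to the projection formula at the end is unnecessary — you only need that the image of the pushforward is the subgroup generated by the images $12t$ and $0$ of your two generators, which happens to coincide with the ideal $(12t)$ because $12t\cdot t=0$ in $A^*(\calV)$; and when you delete $p,q$ from $\calC$ you are really using the content of the proof of Lemma \ref{lemma_generators_chow_nodal_Stacky_curve} (that $\calC\smallsetminus\{p,q\}$ is an open subscheme of $\mathbb{A}^1$), not just its statement.
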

\begin{proof}
We have an exact sequence $A_*(\calC)\to A_*(\calU)\xrightarrow{i} A_*(\calM_{1,2})\to 0$
and from Lemma \ref{lemma_generators_chow_nodal_Stacky_curve} the kernel of $i$ is generated by the set $\{[\calC],j^{(p)}_*A_*(B\bmu_2),j^{(q)}_*A_*(B\bmu_2)\}.$ If we denote by $\overline{\calC}$ the closure of $\calC$ in $\calP$, an equation for $\overline{\calC}$ is $f := 4a_4^3 + 27(a_3^2-a_2^3 - a_2a_4)^2$ (this is the equation of the discriminant). Then $\calP\smallsetminus \overline{\calC} = [\mathbb{A}^3 \smallsetminus V(f) / \bG_m].$
We now compute $A^1([\mathbb{A}^3 \smallsetminus V(f) / \bG_m]) = \Pic([\mathbb{A}^3 \smallsetminus V(f) / \bG_m])$ as in  \cite[Proposition 6.3]{AI19}, using \cite[Proposition 2.10]{Bri13}. 

Notice that $\Pic([\bA^3/\bG_m]) =\Pic(B\bG_m)= \mathbb{Z}$. For $x \in \mathbb{A}^3\smallsetminus V(f)$, the character of $\bG_m$ defined by sending $\lambda \mapsto \frac{f(\lambda x)}{f(x)}$ is $\lambda\mapsto \lambda^{12}$.
 Then as in \cite[Proposition 6.3]{AI19}, we have $\Pic([\mathbb{A}^3 \smallsetminus V(f) / \bG_m]) = \mathbb{Z}/12\mathbb{Z}$, so if we denote by $t:= c_1(\calO_\calP(1)) \in A^*(\calU)$, we have $[\calC] = 12t$.
 
 From Lemma \ref{lemma_calP_minus_Bmu2_points_on_the_cusp}, $j^{(p)}_*A_*(B\bmu_2) = j^{(q)}_*A_*(B\bmu_2) = 12t^2$. Then the image of $A_*(\calC)\to A_*(\calU)$ is generated by $12t$, so $A^*(\calM_{1,2}) = \mathbb{Z}[t]/12t$.
\end{proof}
From Theorem \ref{theorem_chow_ring_m1,2} and Theorem \ref{thm_chow_ring_m12_bar} we deduce the following
\begin{cor}
The restriction map $A^*(\overline{\calM}_{1,2})\to A^*(\calM_{1,2})$ can be identified with the map $\mathbb{Z}[x,y]/(xy, 24x^2 + 24y^2)\to \mathbb{Z}[t]/(12t)$ that sends $y\mapsto 0$ and $x\mapsto t$.
\end{cor}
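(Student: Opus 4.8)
The plan is simply to track the two distinguished generators through the flat pull-back along the open immersion $j:\calM_{1,2}\hookrightarrow\overline{\calM}_{1,2}$, using the explicit presentations already obtained. By Theorem~\ref{thm_chow_ring_m12_bar} we have $A^*(\overline{\calM}_{1,2})=\mathbb{Z}[x,y]/(24x^2+24y^2,xy)$ with $y=[E]$ the class of the exceptional divisor of $f$ and $x=c_1(f^*\calO_\calP(1))$, and by Theorem~\ref{theorem_chow_ring_m1,2} we have $A^*(\calM_{1,2})=\mathbb{Z}[t]/(12t)$ with $t=c_1(\calO_\calP(1)|_{\calM_{1,2}})$. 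Since the restriction map $A^*(\overline{\calM}_{1,2})\to A^*(\calM_{1,2})$ is a ring homomorphism and $x,y$ generate the source as a graded $\mathbb{Z}$-algebra (this is established in the proof of Theorem~\ref{thm_chow_ring_m12_bar}), it is enough to compute $j^*x$ and $j^*y$.

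For $j^*y$: recall that $\calM_{1,2}\subseteq\calU=\calP\smallsetminus Z$, and $\calU$ is identified with the complement of the exceptional divisor $E$ inside $\overline{\calM}_{1,2}$; hence $E\cap\calM_{1,2}=\emptyset$ and $j^*y=j^*[E]=[E\cap\calM_{1,2}]=0$. For $j^*x$: since $f$ restricts to the canonical open immersion $\calU\hookrightarrow\calP$ on $\overline{\calM}_{1,2}\smallsetminus E$, the composite $\calM_{1,2}\hookrightarrow\overline{\calM}_{1,2}\xrightarrow{f}\calP$ is the open immersion $\calM_{1,2}\hookrightarrow\calP$; functoriality of flat pull-back then gives $j^*x=j^*c_1(f^*\calO_\calP(1))=c_1(\calO_\calP(1)|_{\calM_{1,2}})=t$.

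Putting these together, the restriction map is the unique ring homomorphism $\mathbb{Z}[x,y]/(24x^2+24y^2,xy)\to\mathbb{Z}[t]/(12t)$ with $x\mapsto t$ and $y\mapsto 0$, as claimed; one checks for consistency that this assignment kills the defining relations, since $xy\mapsto0$ and $24x^2+24y^2\mapsto 24t^2=2t\cdot(12t)=0$. I do not expect any genuine difficulty here: the only point deserving a word of care is the equality $j^*x=t$, which relies precisely on $f$ being an isomorphism away from the blow-up center $Z$, so that the two line bundles on $\calM_{1,2}$ called the ``pull-back of $\calO_\calP(1)$'' agree. Surjectivity of the restriction map, if one wants it, is then automatic, since $t$ lies in the image and generates the target.
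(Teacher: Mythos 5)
Your proposal is correct and is exactly the argument the paper leaves implicit: the corollary is stated as an immediate consequence of Theorems \ref{thm_chow_ring_m12_bar} and \ref{theorem_chow_ring_m1,2}, with the restriction determined on the generators by $y=[E]\mapsto 0$ (since $E$ is disjoint from $\calM_{1,2}$) and $x=c_1(f^*\calO_\calP(1))\mapsto t$ (since $f$ is an isomorphism away from $Z$, so the composite $\calM_{1,2}\hookrightarrow\overline{\calM}_{1,2}\to\calP$ is the open immersion). Your consistency check on the relations is a nice touch but not needed beyond what you already did.
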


\bibliographystyle{amsalpha}
\bibliography{main}
\end{document}